\newcolumntype{L}[1]{>{\raggedright\let\newline\\\arraybackslash\hspace{0pt}}m{#1}}
\newcolumntype{C}[1]{>{\centering\let\newline\\\arraybackslash\hspace{0pt}}m{#1}}
\newcolumntype{R}[1]{>{\raggedleft\let\newline\\\arraybackslash\hspace{0pt}}m{#1}}
\tikzset{%
  transition/.style={rectangle,minimum size=6mm,draw},
  place/.style={circle,minimum size=6mm,draw},
  database/.style={
    minimum width=2cm,minimum height=1cm,cylinder,
    shape border rotate=90,aspect=0.2,draw
  }
}
\newtheoremstyle{theoremstyle}
{10pt}      
{5pt}       
{\itshape}  
{}          
{\bfseries} 
{}         
{ }      
{}          
\newtheoremstyle{algorithmstyle}
{10pt}      
{5pt}       
{}  
{}          
{\bfseries} 
{}         
{ }      
{}          
\newtheoremstyle{examplestyle}
{10pt}      
{5pt}       
{}          
{}          
{\bfseries} 
{}         
{ }      
{}          
\theoremstyle{theoremstyle}
\newtheorem{theorem}{Theorem}[section]
\newtheorem{lemma}[theorem]{Lemma}
\newtheorem{proposition}[theorem]{Proposition}
\newtheorem{corollary}[theorem]{Corollary}
\theoremstyle{examplestyle}
\newtheorem{example}[theorem]{Example}
\newtheorem{definition}[theorem]{Definition}
\newtheorem{remark}[theorem]{Remark}
\newtheorem{convention}[theorem]{Convention}
\newtheorem{conjecture}[theorem]{Conjecture}
\theoremstyle{algorithmstyle}
\newtheorem{algorithm}[theorem]{Algorithm}
\newcommand{\NN}{\mathbb{N}}
\newcommand{\CC}{\mathbb{C}}
\newcommand{\CCt}{{\mathbb C\{\!\{t\}\!\}}}
\newcommand{\RR}{\mathbb{R}}
\newcommand{\QQ}{\mathbb{Q}}
\newcommand{\ZZ}{\mathbb{Z}}
\newcommand{\FF}{\mathbb{F}}
\newcommand{\suchthat}{\;\ifnum\currentgrouptype=16 \middle\fi|\;}
\newcommand{\bigmid}{\left.\vphantom{\big\{} \suchthat \vphantom{\big\}}\right.}
\newcommand\SmallSetOf[2]{\{{#1}\,|\,{#2}\}}
\newcommand{\Gfan}{\textsc{Gfan}\xspace}
\newcommand{\Polymake}{\textsc{polymake}\xspace}
\newcommand{\Singular}{\textsc{Singular}\xspace}
\newcommand{\GPISpace}{\textsc{GPI-Space}\xspace}
\DeclareMathOperator{\Dress}{Dr}
\DeclareMathOperator{\Grass}{Gr}
\DeclareMathOperator{\TropGrass}{TGr}
\DeclareMathOperator{\initial}{in}
\DeclareMathOperator{\Trop}{Trop}
\DeclareMathOperator{\Spec}{Spec}
\DeclareMathOperator{\lcm}{lcm}
\DeclareMathOperator{\LM}{LM}
\DeclareMathOperator{\sgn}{sgn}
\DeclareMathOperator{\spoly}{spoly}
\begin{document}

\title[Parallel computation of tropical varieties]{Parallel Computation of tropical varieties, their positive part, and tropical Grassmannians}
\subjclass[2020]{14T15, 68W10, 68W30, 14Q15, 14M15, 52B15}

\author{Dominik Bendle}
\address{TU Kaiserslautern}
\email{bendle@rhrk.uni-kl.de}

\author{Janko B\"ohm}
\address{TU Kaiserslautern}
\email{boehm@mathematik.uni-kl.de}

\author{Yue Ren}
\address{Swansea University}
\email{yue.ren@swansea.ac.uk}

\author{Benjamin Schr\"oter}
\address{Binghamton University}
\email{schroeter@math.binghamton.edu}

\thanks{Research by Janko B\"ohm on this work has been supported by Project II.5 of SFB-TRR 195: ``Symbolic Tools in Mathematics and their Application'' of Deutsche Forschungsgemeinschaft. Research by Yue Ren and Benjamin Schr\"oter was partially supported by the Institute Mittag-Leffler during the during the program ``Tropical Geometry, Amoebas and Polytopes''. The authors would like to thank the institute for its hospitality.}

\date{\today}

\maketitle

\begin{abstract}
  In this article, we present a massively parallel framework for computing tropicalizations of algebraic varieties which can make use of finite symmetries. We compute the tropical Grassmannian $\TropGrass_0(3,8)$, and show that it refines the $15$-dimensional skeleton of the Dressian $\Dress(3,8)$ with the exception of $23$ special cones for which we construct explicit obstructions to the realizability of their tropical linear spaces. Moreover, we propose algorithms for identifying maximal-dimensional tropical cones which belong to the positive tropicalization. These algorithms exploit symmetries of the tropical variety even though the positive tropicalization need not be symmetric. We compute the maximal-dimensional cones of the positive Grassmannian $\TropGrass^+(3,8)$ and compare them to the cluster complex of the classical Grassmannian $\Grass(3,8)$.
\end{abstract}

\section{Introduction}
\noindent
Tropical geometry studies combinatorial objects arising from systems of polynomial equations. These so-called \emph{tropical varieties} arise naturally in many areas within and beyond mathematics, such as algebraic geometry \cite{Mikhalkin05}, combinatorics \cite{ArdilaKlivans:2006}, as well as optimization \cite{ABGJ18}, biology \cite{SpeyerSturmfels:2004,YZZ17}, economics \cite{TY15,BK19}, and physics \cite{HM06,HJ11}. Wherever they emerge, tropical varieties are often tied to concrete computational problems, which is why their computation is of key importance for many applications. 

The process of tropicalization associates to any algebraic variety a tropical variety. In this paper, we present an implementation for computing tropicalizations in parallel. It builds on the approach originally developed by Bogart, Jensen, Speyer, Sturmfels, and Thomas \cite{BJSST07} and implemented by Jensen in \Gfan \cite{gfan}. It is also available in the computer algebra system \textsc{Singular} \cite{singular}, see \cite{MR19}. The method is based on the fact that the tropicalization of an irreducible algebraic variety is the support of a polyhedral cell complex, which is connected in codimension 1. The tropical variety is determined via a graph traversal with the nodes corresponding to the maximal polyhedra and edges corresponding to their one-codimensional facets. To pass from one maximal cone to its neighboring cones, we compute tropical links using an algorithm described in \cite{HR18}, which relies on triangular decomposition and Puiseux expansions.
Our algorithm is implemented in a framework for massively parallel computations in computer algebra \cite{BDFPRR}, which is based on the idea of separation of computation and coordination. For the coordination layer, it relies on the workflow management system \textsc{GPI-Space} \cite{GPISpace} to model parallel algorithms in terms of Petri nets, while the computational layer is built on \textsc{Singular}.

Furthermore, we develop a first algorithm towards computing \emph{positive} tropicalizations as studied by Speyer and Williams \cite{SpeyerWilliams05}. While tropical varieties arise from solutions of systems of polynomial equations, positive tropical varieties arise from positive real solutions of systems of polynomial equations. They relate to graphical models in algebraic statistics \cite{PS04} and, more recently, the tropicalization of semialgebraic sets in non-archimedian semidefinite programming \cite{AGS16,JSY18}. It is also conjectured \cite[Conjecture 8.1]{SpeyerWilliams05} that they encode the combinatorics of cluster algebras of finite type. This was proven recently by Brodsky and Stump for many important cases \cite{BS18}. 

One class of tropical varieties of particular interest and the main example in this article are \emph{tropical Grassmannians} $\TropGrass_0(k,n)$. In algebraic geometry, Grassmannians $\Grass(k,n)$ parametrize all $k$-dimensional linear spaces in $K^n$ for a given field $K$. In tropical geometry, their tropicalizations $\TropGrass_0(k,n)$ parametrize all $k$-dimensional tropical linear spaces in $\RR^n$ that are realizable over $K$. In both algebraic and tropical geometry, Grassmannians form one of the simplest moduli spaces and offer a strong basis for the understanding of general (tropical) varieties. In more applied context, the real points on $\Grass(k,n)$ and their tropicalization on $\TropGrass_0(k,n)$ are linked to the soliton solutions of the Kadomtsev-Petviashvili equation \cite{KW13Survey} with positivity corresponding to regularity at all times \cite{KW13Regularity,KW14}.

We employ our algorithms to compute all maximal-dimensional cones in $\TropGrass^+(3,8)$ and compare them to the cluster complex of $\Grass(3,8)$. We verify that \cite[Conjecture 8.1]{SpeyerWilliams05} holds, which is proven to be true in \cite{BS18}. This serves as a verification of our computations, and as an alternative proof of the conjecture in this specific case. Recently, two groups of researchers, Arkani-Hamed, Lam and Spradlin \cite{ALS20} as well as Speyer and Williams \cite{SW20}, have independently proven that on the positive part the tropical Grassmannian $\TropGrass_0(k,n)$ equals the positive Dressian $\Dress^+(k,n)$.

This article is organized as follows: In Section~\ref{sec background}, we fix our notation on tropical geometry, and give some background material required in the subsequent sections.

In Section~\ref{sec:parallelFramework}, we present our massively parallel algorithm for computing tropical varieties with symmetry, which is described in terms of a Petri net. We discuss our implementation in the \textsc{Singular}/\textsc{GPI-Space} framework.

Continuing the work of \cite{SpeyerSturmfels:2004} and \cite{HerrmannJensenJoswigSturmfels:2009} we use our implementation to compute the tropical Grassmannian $\TropGrass_0(3,8)$, and analyze its natural fan structures using the software polymake \cite{polymake}. Thus we give a positive answer to Question~37 in \cite{HJS14} whether it is feasible to compute $\TropGrass_0(3,8)$. Furthermore, we compare it to the \emph{Dressian} $\Dress(3,8)$ as described in \cite{HJS14}. The Dressian $\Dress(k,n)$ parametrizes all $k$-dimensional tropical linear spaces in $\RR^n$, also known as valuated-matroids, independent of their realizability and is generally of higher dimension than the tropical Grassmannian $\TropGrass_p(k,n)$ it contains. We show that $\TropGrass_0(3,8)$ refines the $16$-dimensional skeleton of $\Dress(3,8)$ with exception of $23$ \emph{extended Fano cones} for which explicit obstructions for the realizability of tropical linear spaces are presented. 

In Section~\ref{sec:positiveTropicalization}, we propose general algorithms for computing all maximal-dimensional cones in a tropical variety $\Trop(I)$ which belong to the positive tropicalization $\Trop^+(I)$. These algorithms exploit the symmetry of $\Trop(I)$ even though $\Trop^+(I)$ itself need not be entirely symmetric.

In Section~\ref{sec:positiveGrassmannian}, we compute all maximal-dimensional cones in $\TropGrass^+(3,8)$ and compare them to the cluster complex of $\Grass(3,8)$, verifying that \cite[Conjecture 8.1]{SpeyerWilliams05} holds.

In Section~\ref{sec:open}, we discuss three open questions beyond the scope of this article. These are the coarsest structure on tropical varieties, positive tropicalizations and cluster complexes of infinite type, and real tropicalizations and the topology of real algebraic varieties.

All data and other auxiliary materials will be available shortly and hosted on \href{https://polymake.org}{https://polymake.org}. All polynomial data will be uploaded in \textsc{Singular} format, while all tropical data are directly available via \textsc{polymake} using \textsc{polyDB}.

\section*{Acknowledgments}
\noindent
We would like to thank Santiago Laplagne (Universidad de Buenos Aires) for his implementation of the Newton-Puisseux algorithm in the \textsc{Singular} library \texttt{puiseux.lib}, Mirko Rahn (Fraunhofer ITWM Kaiserslautern) for his invaluable support with \GPISpace, Lukas Ristau (TU Kaiserslautern) for his work on combining \Singular with \GPISpace, Christian Reinbold (TU M\"unchen) for his work on fan traversals using \Singular and \GPISpace, Christian Stump (Ruhr-Universit\"at Bochum) for his insight into cluster algebras, and Charles Wang (Harvard University) for his advice on cluster complexes. We would also like express our greatest gratitude to Fraunhofer ITWM Kaiserslautern for providing us with the necessary computing resources.


\section{Background}\label{sec background}
\noindent
In this section, we fix a few notations by briefly going over some basic concepts of immediate relevance to us. Our notation is largely compatible with that of \cite{MS15}, with the exception that we will use polynomials instead of Laurent polynomials and the \texttt{max}-convention instead of the \texttt{min}-convention. 
This is because the software that we will be presenting in the latter sections is built on infrastructure using polynomials and the \texttt{max}-convention.

\begin{convention}
  For the remainder of the section, let $K$ be an algebraically closed field with a non-trivial valuation $\nu\colon K^\ast\rightarrow\RR$, ring of integers $R$, and residue field~$\mathfrak K$. We fix a splitting $\mu\colon (\nu(K^\ast),+)\rightarrow (K^\ast,\cdot)$ and abbreviate $t^a\coloneqq \mu(a)$ for $a\in K^\ast$.\linebreak We use $\overline{(\cdot)}$ to denote the canonical projection $R\rightarrow \mathfrak K$, and we fix a multivariate polynomial ring $K[x]\coloneqq K[x_1,\ldots,x_n]$.
\end{convention}

\begin{definition}
  The \emph{initial form} of a polynomial $f=\sum_{\alpha\in\NN^n}c_\alpha x^\alpha \in K[x]$ with respect to a weight vector $w\in\RR^n$ is given by
  \begin{align*}
    \initial_w(f)\coloneqq& \sum_{w\cdot\alpha - \nu(c_\alpha) \text{ maximal}} \overline{t^{-\nu(c_\alpha)} c_\alpha} \cdot x^\alpha \in\mathfrak K[x], \\
    \intertext{whereas the \emph{initial ideal} of an ideal $I\unlhd K[x]$ with respect to $w\in\RR^n$ is given by}
    \initial_w(I)\coloneqq& \langle \initial_w(g)\mid g\in I\rangle \unlhd\mathfrak K[x].
  \end{align*}
\end{definition}

The following two equivalent definitions for tropical variety is part of the \emph{Fundamental Theorem of Tropical Algebraic Geometry} \cite[Theorem 3.2.5]{MS15}.

\begin{definition}\label{def:tropicalVariety}
  Let $I\unlhd K[x]$ be an ideal and $V(I)\subseteq K^n$ its corresponding affine variety. The \emph{tropical variety} of $I$ is defined to be
  \begin{align*}
    \Trop(I) \coloneqq\,& \text{cl}\Big(\big\{ (-\nu(z_1),\ldots,-\nu(z_n))\in \RR^n \mid (z_1,\ldots,z_n)\in V(I)\cap(K^\ast)^n \big\}\Big)\\
    =\,& \phantom{\text{cl}\Big(}\big\{ w\in \RR^n\mid \initial_w(I) \text{ contains no monomial}\big\}.
  \end{align*}
  where $\text{cl}(\cdot)$ denotes the closure in the euclidean topology.
\end{definition}


\begin{example}
  Let $K=\CC\{\!\{t\}\!\}$ be the field of complex Puiseux series and $\nu$ its natural valuation. Consider the linear ideal $I=\langle x+y+1\rangle \subseteq K[x,y]$. Figure~\ref{fig:fundamentalTheorem} shows $\Trop(I)$ using both definitions, with valuations resp. weight vectors highlighted.
\end{example}

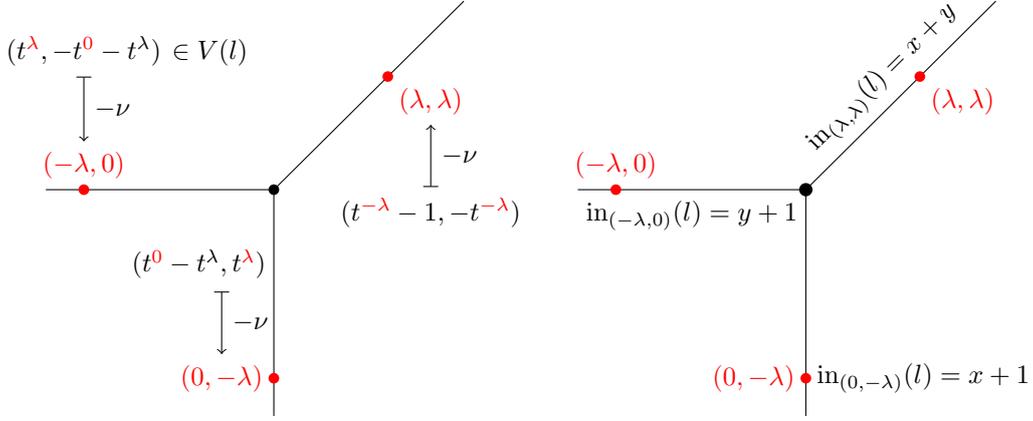
\begin{figure}[h]
  \centering
  \begin{tikzpicture}[every node/.style={font=\footnotesize}]
    \node (o1) at (-3.25,0) {};
    \fill (o1) circle (2pt);
    \draw (o1.center) -- ++(-3,0)
    (o1.center) -- ++(0,-3)
    (o1.center) -- ++(2.5,2.5);

    \node (o11) at ($(o1.center)+(-2.5,0)$) {};
    \fill[red] (o11.center) circle (2pt);
    \node[red,anchor=south] (w1) at (o11.center) {$(-\lambda,0)$};
    \node[anchor=base,yshift=15mm] (p1) at (w1.base) {$(t^{\textcolor{red}{\lambda}},-t^{\textcolor{red}{0}}-t^{\lambda})$};
    \draw[|->] (p1) -- node[right] {$-\nu$} (w1);
    \node[xshift=-0.15cm,anchor=base west] at (p1.base east) {$\in V(l)$};

    \node (o10) at ($(o1.center)+(1.5,1.5)$) {};
    \fill[red] (o10.center) circle (2pt);
    \node[red,anchor=north west] (w0) at (o10.center) {$(\lambda,\lambda)$};
    \node[anchor=base,yshift=-15mm] (p0) at (w0.base) {$(t^{\textcolor{red}{-\lambda}}-1,-t^{\textcolor{red}{-\lambda}})$};
    \draw[|->] (p0) -- node[right] {$-\nu$} (w0);

    \node (o12) at ($(o1.center)+(0,-2.5)$) {};
    \fill[red] (o12.center) circle (2pt);
    \node[red,anchor=east] (w2) at (o12.center) {$(0,-\lambda)$};
    \node[anchor=base,xshift=-3mm,yshift=15mm] (p2) at (w2.base) {$(t^{\textcolor{red}{0}}-t^{\lambda},t^{\textcolor{red}{\lambda}})$};
    \draw[|->] (w2)++(0,1.15) -- node[right] {$-\nu$} (w2);

    \node (o2) at (3.75,0) {};
    \fill (o2.center) circle (2.5pt);
    \draw (o2.center) -- node[below] {$\initial_{(-\lambda,0)}(l)=y+1$} ++(-3,0)
    (o2.center) -- ++(0,-3)
    (o2.center) -- node[sloped, above] {$\initial_{(\lambda,\lambda)}(l)=x+y$} ++(2.5,2.5);

    \node (o20) at ($(o2.center)+(1.5,1.5)$) {};
    \fill[red] (o20) circle (2pt);
    \node[red,anchor=north west] at (o20) {$(\lambda,\lambda)$};

    \node (o21) at ($(o2.center)+(-2.5,0)$) {};
    \fill[red] (o21) circle (2pt);
    \node[red,anchor=south] at (o21) {$(-\lambda,0)$};

    \node (o22) at ($(o2.center)+(0,-2.5)$) {};
    \fill[red] (o22) circle (2pt);
    \node[red,anchor=east] at (o22) {$(0,-\lambda)$};
    \node[anchor=west] at (o22) {$\initial_{(0,-\lambda)}(l) = x+1$};
  \end{tikzpicture}\vspace{-3mm}
  \caption{$\Trop(\langle l\rangle)$ defined two ways with $l=x+y+1$.}
  \label{fig:fundamentalTheorem}
\end{figure}

Tropical geometry usually involves Laurent polynomials $K[x^\pm]$ and takes place in the algebraic torus $(K^\ast)^n$. When working with polynomials, it is therefore important to assume all ideals to be saturated with respect to the product of all variables, or \emph{saturated} in short.

\begin{theorem}[Structure Theorem for Tropical Varieties {\cite[Theorem 3.3.6]{MS15}}]
  Let $I\unlhd K[x]$ be saturated and prime of dimension $d$. Then $\Trop(I)$ is the support of a balanced polyhedral complex, pure of dimension $d$, connected in codimension $1$.
\end{theorem}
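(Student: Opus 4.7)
The plan is to handle the three assertions---polyhedral structure with purity, balancing, and connectedness in codimension one---separately, building on the Gröbner complex associated to $I$. For each weight $w \in \RR^n$, the set of weights giving the same initial ideal is the relative interior of a rational polyhedron $C_w$, and these polyhedra fit together into a polyhedral complex covering $\RR^n$. The cones $C_w$ for which $\initial_w(I)$ contains no monomial form a polyhedral subcomplex whose support is exactly $\Trop(I)$, yielding the polyhedral structure.

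For purity, I would argue that if $w$ lies in the relative interior of a cone $C$ of $\Trop(I)$, then $\initial_w(I)$ remains prime, saturated, and monomial-free, and is invariant under a subtorus whose logarithmic image equals the lineality space $L(C)$. Since dimension is preserved under passage to initial ideals, $V(\initial_w(I))$ is a union of torus orbits whose dimension forces $\dim L(C) = d$ at maximal cones. For balancing, assign to each maximal cone $\tau$ the multiplicity $m_\tau = \sum_P \mathrm{mult}(P)$ summed over the minimal associated primes $P$ of $\initial_w(I)$ for $w \in \mathrm{relint}(\tau)$. Fix a codimension-one cone $\sigma \prec \tau$. The star of $\sigma$ in $\Trop(I)$ coincides, modulo its lineality space, with the tropicalization of a one-dimensional initial ideal; balancing is therefore a local statement, reducing to the one-dimensional case. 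Projecting to a coordinate axis reduces this further to balancing of tropical hypersurfaces in $\RR$, which follows classically from the correspondence between slopes of Newton polygons and orders of roots of the defining polynomial.

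For connectedness in codimension one, I would proceed by induction on $d$, the case $d = 1$ being immediate from the previous step. For $d \ge 2$, a generic linear projection reduces the ambient dimension by one, preserves primality of the defining elimination ideal, and carries $\Trop(I)$ to the tropical variety of a lower-dimensional prime; by induction the image is connected in codimension one, and this connectedness can be lifted back to $\Trop(I)$. The main obstacle I expect is exactly this lifting step: one must ensure that the generic projection faithfully transports codimension-one adjacency between maximal cones without merging or splitting them artificially. Establishing this transversality is the technical core of the Bieri--Groves reduction to the hypersurface case, as carried out in detail in Maclagan--Sturmfels.
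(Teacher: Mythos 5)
The paper does not prove this statement at all: it is quoted as background directly from Maclagan--Sturmfels \cite[Theorem 3.3.6]{MS15}, so the only ``proof'' in the paper is that citation. Your outline is, at the level of strategy, the proof from that reference (Gr\"obner complex for the polyhedral structure, torus-invariance of initial ideals together with Bieri--Groves projections for dimension, locality and reduction to curves for balancing, generic projection to a hypersurface for connectedness), so you are not taking a different route --- but two steps as written do not close.

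First, purity. The torus-invariance argument only gives one inequality: since $\initial_w(I)$ is homogeneous with respect to the span of $C_w(I)$ and is monomial-free of dimension $d$, its variety meets the torus and contains orbits of dimension $\dim C_w(I)$, whence $\dim C_w(I)\le d$. Nothing in your sketch shows that an inclusion-maximal cone cannot have dimension $<d$; that lower bound is exactly where the Bieri--Groves projection is needed (a generic monomial projection to a $(d+1)$-dimensional torus restricts to a map on $\Trop(I)$ with finite fibers onto the tropical hypersurface of the elimination ideal, which is pure of dimension $d$), and you invoke projections only for connectedness. Second, the last step of the balancing argument is off: after passing to the star of a codimension-one cell one has the tropicalization of a one-dimensional variety, and ``projecting to a coordinate axis'' lands in $\RR^1$, where a ``tropical hypersurface'' is a finite point set and there is no balancing condition to quote. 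The correct local statement is that for each coordinate character $x_i$ the weighted sum of the $i$-th coordinates of the primitive ray generators vanishes, which is proved either from the degree-zero property of $\mathrm{div}(x_i)$ on a compactification of the curve (the projection formula for the finite map to the one-dimensional torus) or by projecting to a coordinate \emph{plane} and using the hypersurface case via dual Newton subdivisions. These are precisely the points where \cite{MS15} invests most of its Chapter 3, so deferring them is understandable, but as stated your sketch does not supply them.
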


\begin{definition}
  We define the \emph{Gr\"obner polyhedron} of a homogeneous ideal $I\unlhd K[x]$ around a weight vector $w\in\RR^n$ to be
  \[ C_w(I)\coloneqq \text{cl}\Big(\{v\in\RR^n\mid \initial_v(I)=\initial_w(I)\}\Big). \]
  The \emph{Gr\"obner complex} $\Sigma(I)$ is the set of all Gr\"obner polyhedra of $I$.
\end{definition}

\begin{proposition}[{\cite[Theorem 2.5.3]{MS15}}]\label{prop:structureTheorem}
  Let $I\unlhd K[x]$ be a homogeneous ideal. Then $C_w(I)$ is a closed convex polyhedron for all $w\in\RR^n$, and $\Sigma(I)$ is a finite polyhedral complex. In~particular, $\Trop(I)$ is the support of the subcomplex of the Gr\"obner fan consisting of all Gr\"obner polyhedra whose initial ideals contain no monomials.
\end{proposition}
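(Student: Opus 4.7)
The plan is to realize $C_w(I)$ as an explicit polyhedron coming from a tropical Gr\"obner basis at $w$. Since $I$ is homogeneous, one can produce a finite set $G=\{g_1,\ldots,g_s\}\subset I$ with $\initial_w(I)=\langle \initial_w(g_1),\ldots,\initial_w(g_s)\rangle$ that remains a witnessing generating set on a small neighborhood of $w$. Writing $g_i=\sum_\alpha c_{i\alpha}x^\alpha$, the condition $\initial_v(g_i)=\initial_w(g_i)$ then becomes a system of linear equalities and inequalities of the form $v\cdot\alpha-\nu(c_{i\alpha})=v\cdot\beta-\nu(c_{i\beta})\geq v\cdot\gamma-\nu(c_{i\gamma})$ for $\alpha,\beta\in\mathrm{supp}(\initial_w(g_i))$ and $\gamma\in\mathrm{supp}(g_i)$. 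The common solution set is a relatively open polyhedron, and its closure is exactly $C_w(I)$.

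For finiteness of $\Sigma(I)$, I would rely on the fact that the collection $\{\initial_w(I)\mid w\in\RR^n\}$ is finite, a classical Gr\"obner-fan finiteness result extended to the nontrivially valued setting. Combined with the polyhedrality just established, this produces only finitely many distinct Gr\"obner polyhedra. To check the polyhedral-complex axioms, I would argue that for any $v$ in the relative interior of a face $F\subseteq C_w(I)$, the initial form $\initial_v(g_i)$ is obtained from $\initial_w(g_i)$ by restricting to those exponents attaining the new maximum under $v$; this yields $\initial_v(I)=\initial_v(\initial_w(I))$ and hence $F=C_v(I)$. Two Gr\"obner polyhedra therefore meet along a common face, giving a genuine polyhedral complex.

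For the identification of $\Trop(I)$, I would apply the second characterization in Definition~\ref{def:tropicalVariety}: $w\in\Trop(I)$ iff $\initial_w(I)$ contains no monomial. Since this property depends only on the polyhedron $C_w(I)$, the tropical variety is precisely the union of those Gr\"obner polyhedra whose associated initial ideal is monomial-free. I expect the main technical hurdle to be the careful setup of tropical Gr\"obner bases over a nontrivially valued field $K$: standard Gr\"obner basis theory takes place over trivially valued fields, and adapting it---e.g.\ via Macaulay's trick of adjoining a uniformizer as an extra variable---to secure the required finiteness and local-invariance statements demands some bookkeeping. Once that foundation is in place, the argument above is essentially combinatorial.
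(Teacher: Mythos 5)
The paper gives no proof of Proposition~\ref{prop:structureTheorem} at all: it is quoted verbatim from \cite[Theorem 2.5.3]{MS15}, so there is no in-paper argument to compare against, and the only question is whether your sketch correctly reproduces the standard proof from that source. In outline it does: a Gr\"obner basis at $w$ yields the linear equality/inequality description of $C_w(I)$; finiteness of the set of initial ideals gives finiteness of $\Sigma(I)$; faces of Gr\"obner polyhedra are shown to be Gr\"obner polyhedra; and the second characterization in Definition~\ref{def:tropicalVariety} identifies $\Trop(I)$ with the union of the monomial-free cells. Your closing caveat is also well placed --- the real work in \cite{MS15} is precisely the Gr\"obner theory over a nontrivially valued field (their Sections 2.4--2.5), which is what guarantees both the existence of a witnessing finite set $G$ that is stable on a neighborhood of $w$ and the finiteness of $\{\initial_w(I)\mid w\in\RR^n\}$.

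One step in your face argument is stated backwards. For $v$ in the relative interior of a proper face $F$ of $C_w(I)$, the initial form $\initial_v(g_i)$ is \emph{not} obtained from $\initial_w(g_i)$ by discarding terms: since $v$ is a limit of points $u$ with $\initial_u(g_i)=\initial_w(g_i)$, every exponent $\alpha$ with $u\cdot\alpha-\nu(c_{i\alpha})$ maximal remains maximal at $v$, so $\mathrm{supp}(\initial_v(g_i))\supseteq\mathrm{supp}(\initial_w(g_i))$. Passing to a face \emph{adds} the terms whose defining inequalities have become equalities; it never removes terms. Consequently the identity you invoke, $\initial_v(I)=\initial_v(\initial_w(I))$, is false as written (by the identity $\initial_{v'}(\initial_v(I))=\initial_{v+\epsilon v'}(I)$ for small $\epsilon>0$ from \cite{MS15}, your right-hand side would compute an initial ideal at a generic point of $C_w(I)$, i.e.\ $\initial_w(I)$ itself, contradicting $v$ lying on a proper face). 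The correct deduction runs in the opposite direction: one recovers $\initial_w(I)$ as a further initial ideal of $\initial_v(I)$ by perturbing $v$ towards $w$, and uses this to show that $F=C_v(I)$ and that distinct Gr\"obner polyhedra meet along common faces. This is a local, fixable error; with it corrected, your argument agrees with the proof in \cite{MS15} that the paper is implicitly relying on.
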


For the sake of simplicity, we will restrict ourselves to what is commonly called the \emph{constant coefficient case}. While the parallel framework described in Section~\ref{sec:parallelFramework} works in full generality, the rest of the paper falls in this very special case.

\begin{convention}
  From now on, assume that the field $K$ is either $\overline{\QQ}\{\!\{t\}\!\}$, $\CC\{\!\{t\}\!\}$ or $\overline{\FF}_p\{\!\{t\}\!\}$ for some prime $p$, and that all ideals are both homogeneous and generated by polynomials with coefficients in $\overline{\QQ}$, $\CC$ or $\overline{\FF}_p$.

  In particular, all Gr\"obner polyhedra will be invariant under multiplication by positive real numbers, and we will be refer to Gr\"obner polyhedra and Gr\"obner complexes as \emph{Gr\"obner cones} and \emph{Gr\"obner fans} instead.

  Additionally, all Gr\"obner cones in this paper will be inside the tropical variety unless explicitly specified otherwise. This means that \emph{maximal} or \emph{maximal-dimensional} Gr\"obner cones will only be (inclusion) maximal or maximal-dimensional among the Gr\"obner cones on the tropical variety, i.e., a maximal-dimensional Gr\"obner cone is a Gr\"obner cone $C_w(I)\subseteq\Trop(I)$ with $\dim C_w(I) = \dim \Trop(I)$.

  Moreover, we will use $\Trop(I)$ to denote both the set in Definition~\ref{def:tropicalVariety} and the subfan of the Gr\"obner fan covering it by Proposition~\ref{prop:structureTheorem}. In written text, we will refer to the latter as the \emph{Gr\"obner structure} on $\Trop(I)$.
\end{convention}

Finally, let us recall the definition of Grassmannians.

\begin{definition}\label{def:Grassmannian}
  Let $1\leq k\leq n$. In the following, we will abbreviate the $n$-element set $\{1,\ldots,n\}$ by $[n]$ and the set of all $k$-element subsets of $[n]$ by $\binom{[n]}{k}$. The \emph{Grassmannian} $\Grass(k,n)$ is the variety defined by the ideal
  \[ \mathcal{I}_{k,n} \coloneqq \Big\langle \mathcal{P}_{I,J} \bigmid I\in\textstyle\binom{[n]}{k-1}, J\in\textstyle\binom{[n]}{k+1} \Big\rangle \subseteq K\Big[p_L \bigmid L\in\textstyle\binom{[n]}{k}\Big], \]
  where
  \[ \mathcal{P}_{I,J}  \coloneqq \sum_{j\in J\setminus I} (-1)^{|\SmallSetOf{i\in I}{i<j}|+|\SmallSetOf{j'\in J}{j'>j}|}\cdot p_{I\cup j}\cdot p_{J\setminus j}. \]
The ideal $\mathcal{I}_{k,n}$ is commonly referred to as \emph{Pl\"ucker ideal}, while the $\mathcal{P}_{I,J}$ are commonly called \emph{Pl\"ucker relations}. Note that $\mathcal{P}_{I,J}$ is a trinomial if and only if $|I\cap J|=k-2$, in which case we will refer to them as \emph{3-term Pl\"ucker relations}. The $3$-term Pl\"ucker relations do not generate the Pl\"ucker ideal if $n \geq d+3 \geq 6$, but they always generated the Pl\"ucker ideal up to saturation, see \cite[Section 2]{HerrmannJensenJoswigSturmfels:2009}.

  The \emph{tropical Grassmannian} is the tropicalization of the Pl\"ucker ideal, and we will denote it by $\TropGrass_p(k,n)\coloneqq\Trop(\mathcal I_{k,n})$, where $p$ is the characteristic of the field~$K$. This is well-defined, as the tropical Grassmannian only depends on the characteristic of $K$, since the coefficients of the Pl\"ucker relations are integers.
\end{definition}

Similarly to the classical Grassmannian, its tropicalization $\TropGrass_p(k,n)$ is the easiest example of a non-trivial moduli space. Each point on $\TropGrass_p(k,n)$ corresponds to the tropicalization of a $k$-dimensional linear space in the projective space $\mathbb{P}^{n-1}$.

In this article, we will mainly focus on the case $p=0$, $k=3$, and $n=8$. This is a continuation of the two articles \cite{SpeyerSturmfels:2004} and \cite{HerrmannJensenJoswigSturmfels:2009}, which discuss the tropical Grassmannians $\TropGrass_p(2,n)$, $\TropGrass_p(3,6)$ and $\TropGrass_p(3,7)$.


\section{Parallel computation of tropical varieties}\label{sec:parallelFramework}
\noindent
In this section, we discuss the realization of our algorithm for computing tropical varieties in terms of a Petri net and the technical background on massive parallelization. We begin by recalling the algorithms for computing tropical varieties.

\subsection{Computing tropical varieties}\label{sec:comptrop}

The general framework for computing tropical varieties of polynomial ideals has remained unchanged since its initial conception in \cite{BJSST07} and implementation in \textsc{Gfan} \cite{gfan}. Tropical varieties are computed by traversing all maximal Gr\"obner cones which lie on them, see Figure~\ref{fig:traversal} for an illustration. 
The traversal starts at a pre-computed starting point on the tropical variety. The inequalities and equations of each Gr\"obner cone are uniquely determined by a Gr\"obner basis with respect to a weighted ordering given by any of its relative interior points. The approach consists of two key sub-steps:
\begin{description}[leftmargin=*]
\item[Gr\"obner walk] The Gr\"obner walk \cite{CKM97,FJLT07} is a well-established technique for transforming Gr\"obner bases with respect to one ordering to that of another for a fixed ideal. To determine a tropical variety, it is used to compute Gr\"obner bases with respect to the different orderings associated to the different Gr\"obner cones of full dimension on the tropical variety.
\item[Tropical link] Tropical links describe tropical varieties locally around a point. They are tropical varieties of a combinatorially simpler type and can therefore be computed using specialized algorithms. When computing tropical varieties, they are used to identify the directions pointing towards the maximal cones neighboring a facet (of codimension one). The computation of tropical links has been the bottleneck of the algorithm for a long time. This issue has been resolved through newer approaches using projections \cite{Chan13} or root approximations \cite{HR18}. Newer versions of \textsc{gfan} rely on \cite{Chan13}, while our implementation relies on \cite{HR18}.
\end{description}

\begin{figure}
  \centering
  \begin{tikzpicture}
    \useasboundingbox (-2,2) rectangle (8,-2);
    \node[font=\small] at (-1,1.5) {$\Trop(I)$};
    \draw (0,0) -- (1,0)
    (0,0) -- (0,1)
    (0,0) -- (-1,-1)
    (1,0) -- (2,1)
    (1,0) -- (1,-1)
    (0,1) -- (1,2)
    (0,1) -- (-1,1)
    (-1,-1) -- (-2,-1)
    (-1,-1) -- (-1,-2);

    \fill[blue] (0,0) circle (1.25pt)
    (1,0) circle (1.25pt)
    (0,1) circle (1.25pt)
    (-1,-1) circle (1.25pt);
    \fill[red] (0.35,0) circle (1.5pt);
    \draw[blue,->] ($(0.25,0)+(0,-0.1)$) to[out=180,in=45] ($(-0.2,-0.2)+(0.1,-0.1)$);
    \draw[blue,->] ($(0.25,0)+(0,0.1)$) to[out=180,in=270] ($(0,0.35)+(0.1,0)$);

    \draw[blue,->] ($(0.75,0)+(0,0.1)$) to[out=0,in=225] ($(1.2,0.2)+(-0.1,0.1)$);
    \draw[blue,->] ($(0.75,0)+(0,-0.1)$) to[out=0,in=90] ($(1,-0.35)+(-0.1,0)$);

    \draw[blue,->] ($(0,0.75)+(0.1,0)$) to[out=90,in=225] ($(0.2,1.2)+(0.1,-0.1)$);
    \draw[blue,->] ($(0,0.75)+(-0.1,0)$) to[out=90,in=0] ($(-0.35,1)+(0,-0.1)$);

    \draw[blue,->] ($(-0.8,-0.8)+(0.1,-0.1)$) to[out=225,in=90] ($(-1,-1.35)+(0.1,0)$);
    \draw[blue,->] ($(-0.8,-0.8)+(-0.1,0.1)$) to[out=225,in=0] ($(-1.35,-1)+(0,0.1)$);

    \node[anchor=west,font=\tiny] at (1,0) {$u$};
    \draw[dashed] (1,0) circle (0.5cm);
    \coordinate (oo) at (5,0);
    \draw[dashed] (oo) circle (1.25cm);
    \fill[blue] (oo) circle (2pt);
    \draw[blue,->]
    (oo.center) -- ++(0.55,0.55);
    \draw[blue,->]
    (oo.center) -- ++(-0.75,0);
    \draw[blue,->]
    (oo.center) -- ++(0,-0.75);
    \draw
    ($(oo.center)+(0.6,0.6)$) -- ++(0.25,0.25)
    ($(oo.center)+(-0.8,0)$) -- ++(-0.4,0)
    ($(oo.center)+(0,-0.8)$) -- ++(0,-0.4);
    \node[anchor=west,font=\small] at (oo) {$u$};
    \node[anchor=base west,font=\small,fill=white] at ($(oo)+(0.5,-1)$) {$\Trop(\initial_{u}(I))$};
    \draw[dashed] (1,0.5) -- (4.75,1.225);
    \draw[dashed] (1,-0.5) -- (4.75,-1.225);
    \node[anchor=north west,font=\footnotesize] at (-0.5,-1.5) {tropical point to start the traversal};
    \draw[<-] (0.35,-0.25) -- ++(0,-1.25);
    \node[anchor=south west,font=\footnotesize] at (3.75,1.5) {tropical link to direct the traversal};
  \end{tikzpicture}\vspace{-2mm}
  \caption{Traversing the tropical variety.}
  \label{fig:traversal}
\end{figure}
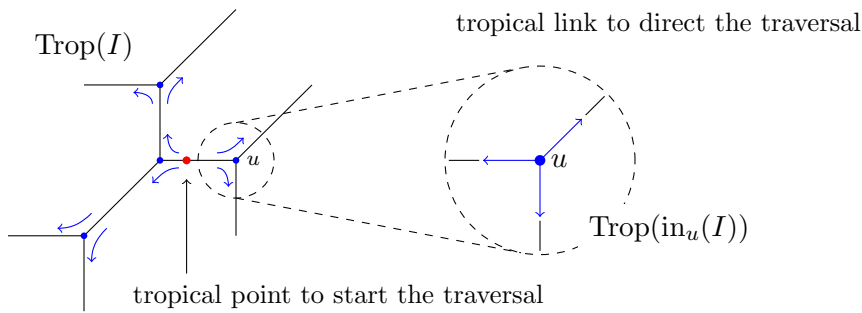

\subsection{Massive parallelization in computer {algebra}}
Our implementation builds on a framework for massively parallel computations in computer algebra \cite{BDFPRR,Ristau}, which combines the computer algebra system \Singular with the workflow management system \GPISpace. This framework originated in work on a parallel smoothness criterion for algebraic varieties \cite{BDFPRR,Ristau}, and has been used in \cite{Reinbold,BFKRR} to realize a massively parallel fan traversal  for computing GIT-fans. For an overview and more applications see \cite{BFK} and \cite{BBDGPRWZ}. The results of the current section extend the traversal of complete fans developed for the GIT-fan algorithm. More details can be found in the thesis of the first author \cite{Bendle}.

The workflow management system \GPISpace is based on the idea of separation of coordination and computation \cite{GC92}. In the coordination layer it uses the language of Petri nets \cite{Petri62} to model a computer program in the form of a concurrent system. It allows for running computations on large scientific computing clusters, and consists of the following three main components \cite[Section~4]{BDFPRR}:
\begin{enumerate}[leftmargin=*]
\item a distributed runtime system managing available resources and assigning jobs to resources,
\item a virtual memory layer allowing processes to communicate and share data,
\item a workflow manager tracking the global structure and state, formulated as a so-called Petri net.
\end{enumerate}

\begin{definition}\label{def:petri}
  A \emph{Petri net} is a finite bipartite directed graph $N=(P, T, F)$, where $P$ and $T$ are disjoint vertex sets called \emph{places} and \emph{transitions} respectively, and where the set of edges $F \subseteq (P\times T) \cup (T \times P)$ is called the set of \emph{flow relations}. Given $p \in P$ and $t \in T$, we call $p$ an \emph{input} to $t$ if $(p,t) \in F$ and $p$ an \emph{output} of $t$ if $(t,p) \in F$.
\end{definition}

Petri nets depict a static model of the algorithm, with transitions representing processes and places representing data passed between them. The dynamics of the algorithm, i.e., its execution, is described via the notion of markings:

\begin{definition}\label{def:marking}
  Let $(P,T,F)$ be a Petri net. A \emph{marking} $M$ is a map $M \colon P \to \NN_{\geq 0}$, and we say a place $p\in P$ \emph{holds $k$ tokens} under $M$ if $M(p) = k$.

  We call a transition $t \in T$ \emph{enabled} if $M(p)>0$ for all $p\in P$ with $(p,t) \in F$. In this case, the transition $t$ may be \emph{fired} by consuming one token of each input and returning one token in each output, which leads to a new marking $M'$ given by
	\[ M'(p) \coloneqq M(p) - |\{ (p,t) \} \cap F| + |\{ (t,p) \} \cap F| \]
  for all $p \in P$. We denote the firing process by $\smash{M \overset{t}\longrightarrow M'}$.
\end{definition}

One important principle to adhere in modelling the state of algorithms via markings in \GPISpace is locality: Firing enabled places should be a local process and not block other enabled transitions from firing simultaneously.
To ensure locality, \GPISpace requires the user to impose restrictions on places such that any token that is in an input to multiple transitions can only be consumed by a single well-defined transition.

\begin{figure}[b]
  \centering
  \begin{tikzpicture}[scale=0.8, node distance=1.7cm]
    \tikzset{transition/.append style={font=\ttfamily}}
    \node [place] at (0,0) (list) {$\ell$};
    \node [transition,align=center] at (-3,0) (empty)
      {consume empty\\\scriptsize if empty($\ell$)}
      edge [pre] (list);
    \node [transition,align=center] at (3,0) (split) {split\\\scriptsize if not empty($\ell$)}
      edge [pre, bend left] (list)
      edge [post,bend right] (list);
    \node [place] at (6,0) (out) {$e$} edge [pre] (split);
  \end{tikzpicture}\vspace{-3mm}
  \caption{Petri net unwrapping a list}
  \label{fig:unwrap}
\end{figure}
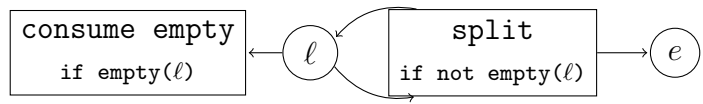

\begin{example}
  Figure~\ref{fig:unwrap} shows an example which unwraps a list of tokens. In it, both \texttt{split} and \texttt{consume empty} have input $\ell$. However, \texttt{split} only consumes non-empty lists from $\ell$, while \texttt{consume empty} only consumes empty lists. Thus there is always a single well-defined transition that can fire. \end{example}

As illustrated, we will usually give conditions in the form ``\texttt{if (not) }\textit{condition}''.

As a consequence of locality, enabled transitions may fire simultaneously. This includes single enabled transitions with enough input tokens to fire multiple times (Figure~\ref{fig:firing} top), which is called \emph{data parallelism}, and multiple enabled transitions with separate input tokens (Figure~\ref{fig:firing} bottom), which is called \emph{task parallelism}.

\begin{figure}[t]
  \centering
  \begin{tikzpicture}[yscale=0.8, node distance=1.4cm]
    \begin{scope}
      \node [place,tokens=2] (in)                {};
      \node [place,tokens=3] (in2) [above of=in] {};
      \node [transition]     (f)   [right of=in,yshift=0.7cm] {$t$}
        edge [pre] (in)
        edge [pre] (in2);
      \node [place]          (out) [right of=f]  {} edge [pre] (f);
    \end{scope}

    \begin{scope}[xshift=5cm]
      \node [place]          (in')                {};
      \node [place,tokens=1] (in2') [above of=in'] {};
      \node [transition]     (f')   [right of=in',yshift=0.7cm] {$t$}
        edge [pre] (in')
        edge [pre] (in2');
      \node [place,tokens=2] (out') [right of=f']  {} edge [pre] (f');
    \end{scope}

    \draw [->,thick] ([xshift=5mm]out -| out) -- ([xshift=-5mm]f' -| in')
      node[above=1mm,midway,text centered] {$t^2$};
  \end{tikzpicture}\\[3mm]
  \begin{tikzpicture}[yscale=0.8, node distance=1.4cm]
    \begin{scope}
      \node [place]      (in) {};
      \node [transition] (s)  [right of=in] {$s$} edge [pre] (in);

      \node [place,tokens=1] (su) [above right of=s] {}      edge [pre] (s);
      \node [transition] (f)  [right of=su]      {$t_1$} edge [pre] (su);
      \node [place]      (l)  [right of=f]       {}      edge [pre] (f);

      \node [place,tokens=1] (sd) [below right of=s] {}      edge [pre] (s);
      \node [transition] (g)  [right of=sd]      {$t_2$} edge [pre] (sd);
      \node [place]      (r)  [right of=g]       {}      edge [pre] (g);
    \end{scope}
    \begin{scope}[xshift=7.5cm]
      \node [place]      (in) {};
      \node [transition] (s)  [right of=in] {$s$} edge [pre] (in);

      \node [place]      (su) [above right of=s] {}      edge [pre] (s);
      \node [transition] (f)  [right of=su]      {$t_1$} edge [pre] (su);
      \node [place,tokens=1] (l)  [right of=f]   {}      edge [pre] (f);

      \node [place]      (sd) [below right of=s] {}      edge [pre] (s);
      \node [transition] (g)  [right of=sd]      {$t_2$} edge [pre] (sd);
      \node [place,tokens=1] (r)  [right of=g]   {}      edge [pre] (g);
    \end{scope}
    \draw [->,thick] (5.75,0) -- node[above=1mm] {$t_1t_2$} ++(1.25,0);
  \end{tikzpicture}\vspace{-1mm}
  \caption{Data parallelism (top) and task parallelism (bottom).}
  \label{fig:firing}
\end{figure}
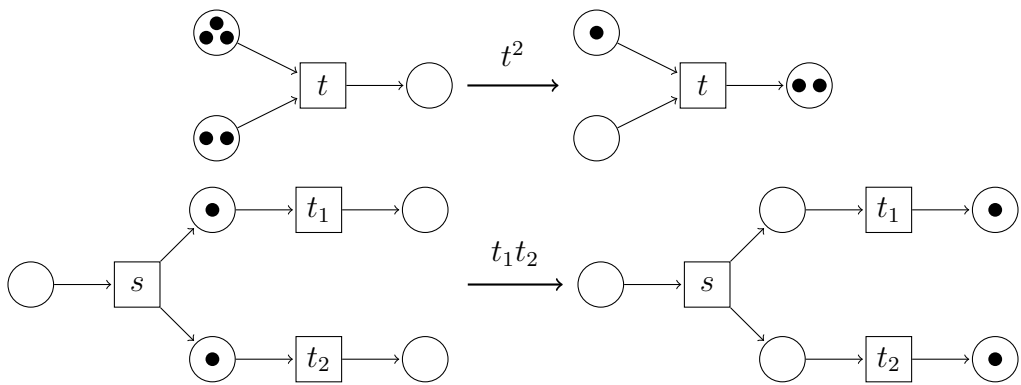

Markings are only a very basic tool for describing the dynamics of algorithms. There are many extensions of Petri nets realized in \textsc{GPI-Space} such as allowing tokens to carry data (\emph{colored} Petri nets), and to allow for transitions to take some time to execute (\emph{timed} Petri nets).

\subsection{Parallel traversals of tropical varieties}

\begin{figure}[b]
  \centering
  \begin{tikzpicture}[yscale=0.8, node distance=1.4cm]
    \tikzset{transition/.append style={font=\ttfamily}}

    \node [transition,align=center] (getC) at (2.1,-2.5) {get cone\\\footnotesize if $e_1=$\texttt{true}};
    \node [place] (cone) at (4.2,-2.5) {} edge [pre] (getC);
    \node [transition] (facets) at (6.3,-2.5) {facets} edge [pre] (cone);
    \node [place] (f) at (8.4,-2.5) {} edge [pre] (facets);
    \node [transition,align=center] (insF) at (10.5,-2.5) {store\\ facet} edge [pre] (f);

    \node [transition,align=center] (getF) at (10.5,2.5) {get facet\\\footnotesize if $e_2=$\texttt{true}};
    \node [place] (ufac) at (8.4,2.5) {} edge [pre] (getF);
    \node [transition] (neigh) at (6.3,2.5) {neighbors} edge [pre] (ufac);
    \node [place] (uC) at (4.2,2.5) {$m$} edge [pre] (neigh);
    \node [transition,align=center] (insC) at (2.1,2.5) {store\\ cone} edge [pre] (uC);

    \node [database] (cs) at (0,0) {cones}
    edge [out=270,in=180,dotted,post,thick] (getC)
    edge [out=90,in=180,dotted,pre,thick] (insC);

    \node [database] (fs) at (12.6,0) {facets}
    edge [out=90,in=0,dotted,post,thick] (getF)
    edge [out=270,in=0,dotted,pre,thick] (insF);

    \node [place,blue] (eC) at (2.1,0) {$e_1$}
    edge [out=100,in=260,pre,blue] (insC)
    edge [out=80,in=280,post,blue] (insC)
    edge [out=260,in=100,pre,blue] (getC)
    edge [out=280,in=80,post,blue] (getC);
    \node [place,blue] (eF) at (10.5,0) {$e_2$}
    edge [out=100,in=260,pre,blue] (getF)
    edge [out=80,in=280,post,blue] (getF)
    edge [out=260,in=100,pre,blue] (insF)
    edge [out=280,in=80,post,blue] (insF);

    \node [transition,align=center] (init) at (4.2,0) {starting\\ cone} edge [post] (uC);
    \node [place,tokens=1] (ctrl) at (4.2,-1.4) {} edge [post] (init);
    \node [place] (data) at (6.3,0) {$I$}
    edge [post,dashed] (neigh)
    edge [post,dashed] (facets)
    edge [post,dashed] (init);
  \end{tikzpicture}\vspace{-1mm}
  \caption{Petri net modelling the traversal of a tropical variety}
  \label{fig:refined}
\end{figure}
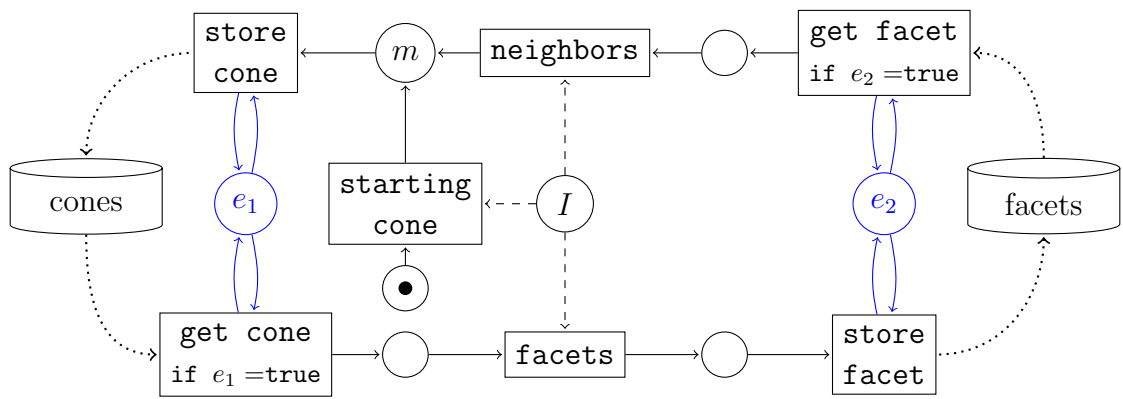

Figure~\ref{fig:refined} shows a Petri net modelling the traversal of the tropical variety as mentioned in Section~\ref{sec:comptrop}. The computation of the facets requires Gr\"obner bases, while the computation of the neighbors requires tropical links. They are separated for finer control over job sizes, see \cite{Bendle} for more details. The Petri net consists of the following places and transitions:

\begin{description}[leftmargin=*]
  \item[Place $I$] read-only input data containing ideal and symmetries.
  \item[Transition \texttt{starting cone}] computes a random maximal Gr\"obner cone on the tropical variety and places it into $m$.
  \item[Place $m$] holds maximal Gr\"obner cones on the tropical variety.
  \item[Transition \texttt{store cone}] takes a Gr\"obner cone and inserts it into the external storage, discarding cones that are already known. Afterwards, replaces the boolean token in $e_1$ by \texttt{true} or \texttt{false} depending on whether unprocessed cones remain in the storage.
  \item[Place $e_1$] holds \texttt{true} if storage contains unprocessed cones, \texttt{false} otherwise.
  \item[Transition \texttt{get cone}] retrieves an unprocessed Gr\"obner cone and marks it as processed, provided that $e_1$ holds a \texttt{true}-token.
  \item[Transition \texttt{facets}] computes the facets of a Gr\"obner cone.
  \item[Transitions \texttt{store facet} and \texttt{get facet}] analogous to \texttt{store cone} and \texttt{get cone}.
  \item[Place $e_2$] analogous to $e_1$.
  \item[Transition \texttt{neighbors}] takes a facet and computes all incident maximal Gr\"obner cones on the tropical variety.
\end{description}

The algorithm terminates if $e_1$ and $e_2$ hold a \texttt{false}-token, i.e.\ no unprocessed cones and facets remain in storage, and if the input places to \texttt{store cones} and \texttt{store facet} are empty.

\subsection{Timings for the tropical Grassmannian}

\begin{table}[b]
  \footnotesize
  \begin{subtable}[t]{0.475\textwidth}
    \centering
    \begin{tabular}[t]{r|r||r|r||r}
      \multicolumn{5}{c}{\boldmath$\TropGrass_0(3,7)$} \\ \hline
      \textbf{nodes} & \textbf{cores} & \textbf{time [s]} & \textbf{speedup} & \textbf{eff.} \\ \hline
      1 & 1   & 792.8 &  1.000 & 1.000 \\
      1 & 2   & 382.8 &  2.070 & 1.035 \\
      1 & 4   & 191.1 &  4.147 & 1.037 \\
      1 & 8   &  98.1 &  8.080 & 1.001 \\
      1 & 12  &  74.1 & 10.691 & 0.891 \\
      1 & 16  &  58.0 & 13.653 & 0.853 \\
      2 & 24  &  42.8 & 18.522 & 0.772 \\
      2 & 32  &  39.7 & 19.942 & 0.623
    \end{tabular}
  \end{subtable}\hfill%
  \begin{subtable}[t]{0.475\textwidth}
    \centering
    \begin{tabular}[t]{r|r||r|r||r}
      \multicolumn{5}{c}{\boldmath$\TropGrass_0(3,8)$} \\ \hline
      \textbf{nodes} & \textbf{cores} & \textbf{time [s]} & \textbf{speedup} & \textbf{eff.} \\
      \hline
      1  &  15 & 98926.1 &  *15.000 &*1.000 \\
      2  &  30 & 37398.7 &   39.675 & 1.322 \\
      4  &  60 & 14486.3 &  102.435 & 1.707 \\
      8  & 120 &  6597.3 &  224.925 & 1.874 \\
      16 & 240 &  3297.9 &  449.955 & 1.874 \\
      24 & 360 &  2506.0 &  592.125 & 1.645 \\
      32 & 480 &  2001.7 &  741.285 & 1.544 \\
      40 & 600 &  1509.6 &  982.935 & 1.638 \\
      48 & 720 &  1267.3 & 1170.855 & 1.626 \\
      56 & 840 &  1188.2 & 1248.825 & 1.487
    \end{tabular}
  \end{subtable}
  \caption{Timings for computing the tropical Grassmannians $\TropGrass_0(3,7)$ and $\TropGrass_0(3,8)$ in parallel}
  \label{tab:timings}
\end{table}

Table~\ref{tab:timings} shows the timings to compute the tropical Grassmannians $\TropGrass_0(3,7)$ and $\TropGrass_0(3,8)$ in relation to the number of CPU cores used. Additionally, \texttt{speedup} lists the ratios between the single- and multi-core computation times, while \texttt{efficiency} further divides that number by the number of cores. Note that, due to the size of $\TropGrass_0(3,8)$, no single core computation could be carried out. All \texttt{efficiency} numbers instead are based on the 15 core timing (marked with $^*$ in Table~\ref{tab:timings}).

All computations were run on a cluster at the Fraunhofer ITWM \cite{ClusterHardware}. Each node is fitted with two Intel Xeon E5-2670 processors and 64\,GB of memory, amounting to 16 CPU cores per node at a base clock of 2{,}6\,Ghz. The computations of $\TropGrass_0(3,8)$ were done with a fixed configuration of 15 compute jobs and one storage interface job per node.

As shown in Table~\ref{tab:timings}, computing $\TropGrass_0(3,7)$ scales favorably up to around 12 CPU cores, after which a noticeable drop in efficiency can be observed. This is expected, however, as $\TropGrass_0(3,7)$ is up to symmetry covered by only 125 cones, and the number of cores exceed the maximum queue size.

\begin{figure}[tb]
  \centering
  \small
  \includegraphics[scale=0.85]{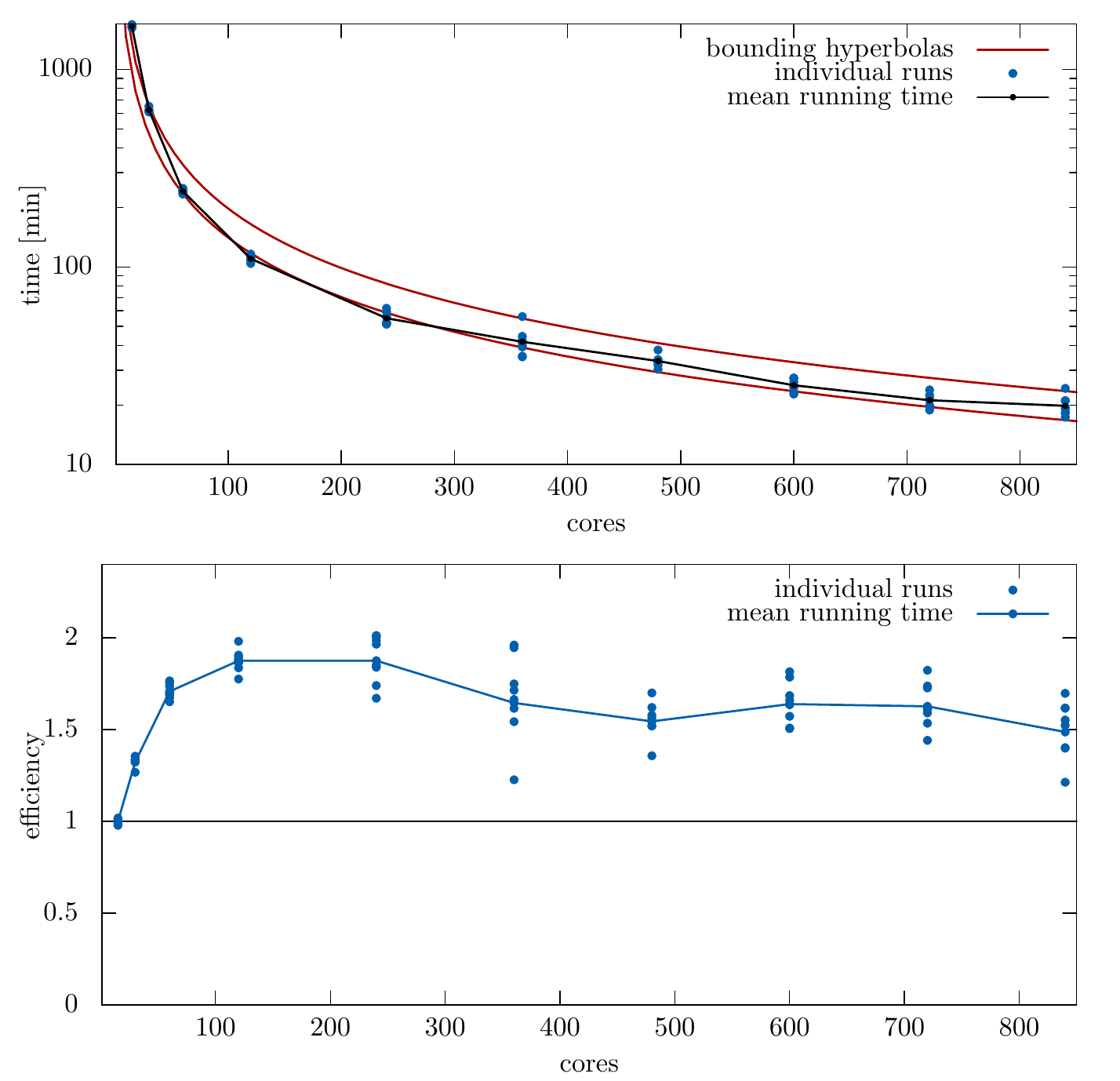}
  \caption{Timings and efficiency of $\TropGrass_0(3,8)$}
  \label{fig:timing38}
\end{figure}

Figure~\ref{fig:timing38} shows the timings and the efficiency graph for $\TropGrass_0(3,8)$. The timings scale very well, with no significant drop in efficiency, even to more than 800 cores. We do not encounter the queue size problem due to the size of $\TropGrass_0(3,8)$, and there is no visible decrease in efficiency for increasing core count, as one could expect due to increased overhead and communication.

We observe a surprising surge in efficiency around 60 cores.  At the time of writing, no comprehensive explanation has been found for this behaviour. We suspect that this is partly due to technical effects of the cluster hardware, e.g. with regard to the memory bandwidth. In \cite{BDFPRR}, experiments with a different algorithm on the same cluster have shown that distributing the number of used processor cores over more machines can lead to a speedup. This might indicate a memory bottleneck, and could partially explain the unexpected behaviour of the efficiency graph. Moreover, in \cite{BDFPRR} it was observed that massively parallel implementations can lead to a superlinear speedup by allowing the randomized algorithm to find a faster path to the final result. In our setting this amounts to different possible expansions of the tropical variety leading to different computation times.


\section{Tropical Grassmannians and Dressians}\label{sec:grass38}

\noindent
In this section, we compare the tropical Grassmannian $\TropGrass_0(3,8)$ to the Dressian $\Dress(3,8)$ described in \cite{HJS14}, i.e., we compare the moduli of realizable tropical linear spaces or valuated matroids with the moduli of all tropical linear spaces or valuated matroids. 
The main difficulty stems from the fact that both are covered by thousands of cone orbits with respect to the sizeable group $S_8$.
We begin with a brief introduction and some formal definitions.

\begin{definition}
  Let $1\leq k\leq n$ and recall 3-term Pl\"ucker relations $\mathcal P_{I,J}$ from Definition~\ref{def:Grassmannian}. The \emph{Dressian} $\Dress(k,n)$ is the intersection of their tropical hypersurfaces:
	\[ \Dress(k,n) \coloneqq  \bigcap_{I,\, J} \Trop(P_{I,J}). \]
  where the intersection is taken over all sets $I\in\binom{[n]}{k-1}$, $J\in\binom{[n]}{k+1}$ with $|I\cap J|=k-2$.
  By definition, the Dressian is the support of the common refinement of the Gr\"obner subfans covering $\Trop(P_{I,J})$, and we will use $\Dress(k,n)$ to denote both the set and the polyhedral fan covering the set.
\end{definition}

The Dressian is a \emph{tropical prevariety}, i.e., it is the intersection of the tropical hypersurfaces of a finite generating set. Usually, tropical prevarieties and tropical varieties have little in common besides one being trivially contained in the other. In fact, merely testing both objects for equality is a hard task \cite{Theobald06,GRS17}, and it is unclear what distinguishes a generating set for whom equality holds, commonly called a \emph{tropical basis} \cite{JoswigSchroeter:2018}.
Nevertheless, the Dressian is interesting for many reasons:
\begin{itemize}[leftmargin=*]
	\item The Dressian is the moduli space of all tropical linear spaces, also known as valuated matroids. Similar to the Grassmannian in algebraic geometry, the Dressian can be regarded as one of the simplest moduli spaces in tropical geometry. 
	\item The hypersimplex $\Delta(k,n)$ is the moment polytope for the torus action on the complex Grassmannian. The Dressian $\Dress(k,n)$ is the subfan of the secondary fan of the $\Delta(k,n)$ consisting of all of matroid subdivisions \cite{GelfandGoreskyMacPhersonSerganova:1987,MS15}.
\item Recent work of Huh and Br\"and\'en \cite[Theorem 8.7]{BrandenHuh19} regard the Dressian $\Dress(d,n)$ as the tropicalization of the space of Lorentzian polynomials supported on $\Delta(d,n)$, i.e., on the basis of the uniform matroid of rank $d$ on $n$ elements.
\end{itemize}

As mentioned above, the Dressian is a subfan of the secondary fan of the hypersimplex and depends only on two parameters. We will consider the Dressian equipped with this fan structure which is also known as \emph{Pl\"ucker structure}; see \cite{OlartePanizzutSchroeter:2019}.
This is the polyhedral structure that we obtain from the intersection of the tropical hypersurfaces of the $3$-term Pl\"ucker relations.
This structure is the coarsest structure, as for any two vectors that lie in distinct maximal cones there is a tropical $3$-term Pl\"ucker relation whose maximum is attained twice, but on different terms. Thus a positive combination of these vectors attains the maximum only at a single term.

The tropical Grassmannian $\TropGrass_p(d,n)$ depends on the characteristic $p$ of the underling field though almost all tropical Grassmannians agree with $\TropGrass_0(d,n)$.
As a tropical variety it is naturally a subfan of a Gr\"obner fan. In contrast to the Pl\"ucker structure the Gr\"obner structure typically is not a coarsest structure.

There are many explicit computational results on the Dressian and the tropical Grassmannian inside it.
In particular, all existing computations verify that the Pl\"ucker structure of the Dressian coarsens the Gr\"obner fan structure of the tropical Grassmannian in characteristic $p=0$, i.e., that there is a subfan of the Dressian supported on the tropical Grassmannian. This subfan is often strictly coarser than the Gr\"obner fan restricted to it.
The following remarks and Table~\ref{tab:sumT} summarize these results by comparing tropical Grassmannians with the Gr\"obner and the Pl\"ucker structures for all known $(k,n)$ and $p$ using their rays and maximal cones, the bold numbers representing the number of orbits with respect to the natural $S_n$-action on the coordinates.

\begin{table}[bt]
	\centering
	\renewcommand{\arraystretch}{0.95}
	\begin{tabular*}{0.85\linewidth}{@{\extracolsep{\fill}}llllr@{\hspace{0.05cm}}r@{\hspace{0.3cm}}r@{\hspace{0.05cm}}r}
		\toprule
		$d$ & $n$ & char. & stru. & \# rays & \# orbits & \# max. cones & \# orbits\\
		\midrule
		$3$ & $6$ & any & G & $65$ & $\mathbf{3}$ & $1035$ & $\mathbf{7}$\\
		$3$ & $6$ & any & D & $65$ & $\mathbf{3}$ & $1005$ & $\mathbf{7}$\\
		$3$ & $7$ & $p=2$ & G & $751$ & $\mathbf{7}$ & $252\,420$ & $\mathbf{125}$\\
		$3$ & $7$ & $p\neq2$ & G & $721$ & $\mathbf{6}$ & $252\,000$ & $\mathbf{125}$\\
		$3$ & $7$ & $p\neq2$ & D & $616$ & $\mathbf{5}$ & $211\,365$ & $\mathbf{94}$\\
		\bottomrule
	\end{tabular*}
	\caption{The number of rays and maximal cones of the tropical Grassmannians $\TropGrass_p(3,6)$ and $\TropGrass_p(3,6)$ with the Gr\"obner structure (G) inherited from the Gr\"obner fan or the coarsest Pl\"ucker structure (D) inherited from the Dressian.}
	\label{tab:sumT}
\end{table}

\begin{remark}
	The tropical Grassmannian $\TropGrass_p(2,n)$ with the Gr\"obner structure is independent of the characteristic, has $2^{n-1}-n-1$ rays in $\lceil\frac{n-3}{2}\rceil$ orbits and $(2n-5)!!$ maximal cones.
	The number of $S_n$-symmetry classes equals the number of trivalent trees with $n$ leaves \cite[A000672]{OEIS}.
	It is the moduli spaces of tropical lines, phylogenetic trees with $n$ labeled leaves, and tropical rational curves of genus $0$ with $n$ marked points. This structure is the coarsest fan structure. The tropical Grassmannian and Dressian agree.
	The rays correspond to split hyperplanes and the tropical Grassmannian is isomorphic to the split complex.
	The connected matroids in the corresponding matroid subdivisions are sparse matroids. See \cite{SpeyerSturmfels:2004}, \cite{HerrmannJoswig:2008} and \cite{JoswigSchroeter:2017} for further details.
\end{remark}

\begin{remark}
	The tropical Grassmannian $\TropGrass_p(3,6)$ is independent of the characteristic, the Dressian and tropical Grassmannian have the same support, but the Gr\"obner structure is a refinement of the Dressian.
	To be precise, there is a cone over an three dimensional bipyramid in the Pl\"ucker structure which the Gr\"obner structure refines into three cones over tetrahedra. The tropical Grassmannian $\TropGrass_p(3,6)$ with the Gr\"obner structure is a simplicial fan.

	The tropical Grassmannian $\TropGrass_p(3,7)$ depends on the characteristic.
	The Gr\"obner structure on  $\TropGrass_2(3,7)$ is coarse, but not a subfan of the Dressian $\Dress(3,7)$.
	The Gr\"obner structure on $\TropGrass_p(3,7)$ is a refinement of the Pl\"ucker structure if $p\neq 2$.
	The number of rays and maximal cones is summarized in Table \ref{tab:sumT}.
	Further details can be derived from \cite{SpeyerSturmfels:2004} and \cite{HerrmannJensenJoswigSturmfels:2009}.
\end{remark}

We extend the previous result by combining our computation in Section~\ref{sec:parallelFramework} with the following computational result in \cite{HJS14} describing the Dressian $\Dress(3,8)$.

\begin{proposition}[{\cite[Theorem 31]{HJS14}\footnote{Our computation showed that the data of \cite{HJS14} misses a $16$-dimensional simplicial cone of orbit size $840$. The orbit is represented by a cone containing the corank vector of the sparse-paving matroid with non-bases $015$, $024$, $067$, $126$, $137$, $235$, $346$, $457$. It is a cone whose eight rays correspond to vertex splits all lying in the same $S_8$-orbit.}
}]
	The Dressian $\Dress(3,8)$ is a non-pure $17$-dimensional fan with a $8$-dimensional lineality space, it consists of $15\,470$ rays in twelve $S_8$-orbits and $117\,595\,485$ cones of dimension $16$ in $4\,789$ $S_8$-orbits.
\end{proposition}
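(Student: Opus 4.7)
The plan is to verify the count in \cite{HJS14} by recomputing $\Dress(3,8)$ directly from the $3$-term Pl\"ucker relations and cross-checking the resulting orbit list against the published data. By definition, $\Dress(3,8)$ is the intersection $\bigcap_{I,J} \Trop(\mathcal{P}_{I,J})$ of the tropical hypersurfaces associated with the trinomial Pl\"ucker relations $\mathcal{P}_{I,J}$ with $|I \cap J| = 1$, inside the ambient space $\RR^{\binom{[8]}{3}} = \RR^{56}$. Each such hypersurface is a fan with three maximal cones corresponding to the three ways two of the terms can tie at the maximum, so we would assemble the Dressian by successive common refinement, working modulo the $8$-dimensional lineality space spanned by the vectors $\sum_{L \ni i} e_L$ for $i \in [8]$.

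To render the computation tractable we would exploit the natural $S_8$-action on coordinates and enumerate only orbit representatives. Starting from a known matroid subdivision, for example the split of the hypersimplex $\Delta(3,8)$ induced by a hyperplane split, we traverse the maximal cells by crossing codimension-one walls. Each wall yields its neighbouring cells combinatorially, by identifying which pair of terms of an adjacent $3$-term Pl\"ucker relation ties at the maximum; this admits the massively parallel Petri-net realisation of Section~\ref{sec:parallelFramework}, where tropical link computations reduce to the trinomial case and become purely combinatorial, while $S_8$-canonical forms are handled via \textsc{polymake}. Non-pureness must be respected during the traversal, since $17$-dimensional cells coexist with $16$-dimensional maximal cells in different orbits and must not be confused when deciding which walls to cross.

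The principal obstacle is the sheer scale: with over $10^8$ cones of dimension $16$ in thousands of orbits, even orbit-indexed enumeration produces a large intermediate queue and demands an efficient canonical-form test. Once the traversal stabilises, tabulating orbits of rays and of $16$-dimensional cones reproduces the stated counts of $15\,470$ rays in twelve orbits and $117\,595\,485$ cones in $4\,789$ orbits. Cross-referencing against \cite{HJS14} would then reveal the missing $S_8$-orbit of size $840$, generated by the simplicial $16$-dimensional cone whose associated valuated matroid is the sparse-paving matroid with non-bases $015, 024, 067, 126, 137, 235, 346, 457$; we verify this cone independently by exhibiting its eight rays as vertex splits lying in a single $S_8$-orbit and by checking directly that the cone is a maximal cell of the common refinement.
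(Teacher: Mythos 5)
There is a genuine gap in your plan, and it is also worth noting that the paper does not reprove this statement at all: the proposition is imported from \cite[Theorem 31]{HJS14}, and the only new content is the footnote. The missing $S_8$-orbit of size $840$ was detected not by recomputing $\Dress(3,8)$ but as a by-product of the independent computation of $\TropGrass_0(3,8)$ in Theorem~\ref{thm:TropGrass38GroebnerStructure}: when every maximal Gr\"obner cone is checked for containment in some $16$-dimensional Dressian cone from the published data, the cones in the missing orbit have nowhere to go, which exposes the omission. Your proposal of a from-scratch recomputation is therefore a genuinely different (and much more expensive) route, and it would need to be carried out correctly to count as a verification.

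The gap is in the traversal itself. The wall-crossing strategy of Section~\ref{sec:parallelFramework} is justified for tropical varieties by the Structure Theorem: an irreducible tropical variety is pure and connected in codimension one, so starting from one maximal cone and crossing facets reaches everything. The Dressian is a tropical \emph{prevariety}, and the proposition you are proving explicitly states that it is non-pure ($17$-dimensional with maximal cells of dimension $16$). No analogue of the Structure Theorem guarantees that its maximal cells are connected through codimension-one walls, so your traversal could terminate having missed entire components or entire orbits of maximal cells --- which is precisely the kind of error you are trying to detect in \cite{HJS14}. Acknowledging non-pureness ``during the traversal'' does not repair this; you would need an independent connectivity argument, or a method that does not rely on connectivity at all. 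The computation in \cite{HJS14} takes the latter route, working inside the secondary fan of the hypersimplex $\Delta(3,8)$: one first classifies the rays (coarsest matroid subdivisions), then assembles cones by testing which collections of rays sum to points still satisfying all $280$ tropical $3$-term Pl\"ucker relations, a membership test that is local and requires no connectivity hypothesis. Without closing this hole, your recomputation cannot certify the stated counts of $15\,470$ rays and $117\,595\,485$ sixteen-dimensional cones.
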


The following theorem is a summary of a elaborate computation based on \Singular, \Polymake and the framework described in Section~\ref{sec:parallelFramework}.

\begin{theorem}\label{thm:TropGrass38GroebnerStructure}
  The Gr\"obner subfan supported on the tropical Grassmannian $\TropGrass_0(3,8)$ is a $16$-dimensional fan with a $8$-dimensional lineality space, it consists of $732\,725$ rays in $95$ $S_8$-orbits and $278\,576\,760$ maximal cones in $14\,763$ $S_8$-orbits.

	Moreover, the coarsest fan structure supported on $\TropGrass_0(3,8)$ is a subfan of the Dressian consisting of $15\,470$ rays in twelve $S_8$-orbits and $117\,445\,125$ maximal cones in $4\,766$ $S_8$-orbits.
\end{theorem}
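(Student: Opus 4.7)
The plan is to turn the statement into a concrete computation that combines the massively parallel traversal of Section~\ref{sec:parallelFramework} with postprocessing in \Polymake, and then to match the output against the explicit description of $\Dress(3,8)$ from \cite{HJS14}. First, I would set up the input: the Pl\"ucker ideal $\mathcal I_{3,8}\subseteq\QQ[p_L\mid L\in\binom{[8]}{3}]$ and the induced $S_8$-action on the $56$ Pl\"ucker coordinates. Since $\mathcal I_{3,8}$ is prime and saturated and $\Grass(3,8)$ has affine dimension $3\cdot 5 +1 = 16$, the Structure Theorem guarantees that $\TropGrass_0(3,8)$ is pure of dimension $16$ and connected in codimension one, which is exactly the hypothesis needed to run the graph traversal. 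The $8$-dimensional lineality comes from the torus action on the $56$ Pl\"ucker coordinates through the row span of the $(k\times n)$-incidence matrix, which can be read off directly from the linear part of any maximal Gr\"obner cone.

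Next, I would launch the Petri-net traversal modelled in Figure~\ref{fig:refined}, seeded by a pre-computed maximal cone on $\TropGrass_0(3,8)$ (e.g.\ obtained by perturbing the fine mixed subdivision coming from the tropicalization of a generic linear space). At each node, the \texttt{facets} transition computes codimension-one faces via reduced Gr\"obner bases, while \texttt{neighbors} calls the tropical link algorithm of \cite{HR18} through \Singular; the $S_8$-symmetry is exploited in the \texttt{store cone} and \texttt{store facet} transitions so that only orbit representatives are kept and the traversal graph is the symmetry-reduced dual graph of the maximal cones. Termination is guaranteed by the finiteness of the Gr\"obner fan and is witnessed by both $e_1$ and $e_2$ holding \texttt{false}; at that point reading off the stored orbit representatives and their stabilizers yields the $95$ ray orbits, the $14\,763$ maximal cone orbits, and, by summing orbit sizes $|S_8|/|\Stab|$, the totals $732\,725$ and $278\,576\,760$.

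To obtain the second half of the statement, I would export the set of maximal Gr\"obner cones and pull in the $\Dress(3,8)$ data from \cite{HJS14} (corrected by the extra simplicial cone orbit of size $840$ noted in the footnote). For each of the $4\,789$ Dressian orbit representatives of $16$-dimensional cones, I would pick a relative interior point $w$ and test whether $\initial_w(\mathcal I_{3,8})$ contains a monomial using a single Gr\"obner basis computation in \Singular; the cones surviving this test are precisely those contained in $\TropGrass_0(3,8)$, and refining the traversal output against them produces the coarsest fan structure on $\TropGrass_0(3,8)$. Since every Gr\"obner cone on $\TropGrass_0(3,8)$ lies inside a unique Dressian cone (the Pl\"ucker structure is indeed a coarsening, as mentioned in the text), the coarsest structure is automatically a subfan of $\Dress(3,8)$; counting the orbits that pass the monomial-freeness test gives $12$ ray orbits and $4\,766$ maximal cone orbits, i.e.\ all but $23$ of the Dressian's $16$-dimensional orbits, matching the number of extended Fano orbits discussed in the introduction.

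The main obstacle is purely one of scale: a naive, unparallelized traversal is infeasible, and even the symmetry-reduced one produces tens of thousands of cone orbits, each requiring nontrivial Gr\"obner and tropical link computations. The resolution is exactly the framework of Section~\ref{sec:parallelFramework}: separating \texttt{facets} and \texttt{neighbors} into independent transitions to control job sizes, using \GPISpace to distribute work over the cluster, and relying on the scaling behaviour documented in Table~\ref{tab:timings} and Figure~\ref{fig:timing38}. A secondary difficulty is bookkeeping consistency across orbit representatives under the $S_8$-action, which is handled by canonicalizing each cone via a fixed orbit-normal-form before insertion into the external storage so that duplicate detection in \texttt{store cone} and \texttt{store facet} is reliable.
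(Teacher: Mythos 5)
Your first half — setting up $\mathcal I_{3,8}$ with its $S_8$-action, invoking the Structure Theorem to justify the connectedness-in-codimension-one hypothesis, and running the symmetry-reduced Petri-net traversal to obtain the Gr\"obner structure with its $95$ ray orbits and $14\,763$ maximal cone orbits — is exactly what the paper does, and is fine.

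The second half has a genuine gap. You propose to decide, for each of the $4\,789$ Dressian orbit representatives, whether it lies on $\TropGrass_0(3,8)$ by testing a \emph{single} relative interior point $w$ for monomial-freeness of $\initial_w(\mathcal I_{3,8})$, and you assert that ``the cones surviving this test are precisely those contained in $\TropGrass_0(3,8)$.'' That inference is only valid if the relative interior of every $16$-dimensional Dressian cone is either entirely contained in or entirely disjoint from the tropical Grassmannian — which is precisely the nontrivial claim that the second paragraph of the theorem asserts and that the paper's proof is devoted to verifying. A priori, since $\Dress(3,8)$ is a $17$-dimensional prevariety strictly containing the $16$-dimensional variety, a $16$-dimensional Dressian cone could meet $\TropGrass_0(3,8)$ in a proper union of Gr\"obner cones; one positive interior point would then certify intersection but not containment, and the ``coarsest structure'' would fail to be a subfan of the Dressian at all. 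Your justification for skipping this (``every Gr\"obner cone lies inside a unique Dressian cone, hence the coarsest structure is automatically a subfan'') is circular: that the Pl\"ucker structure coarsens the Gr\"obner structure on $\TropGrass_0(3,8)$ is exactly what is being proved here, and in any case containment of each Gr\"obner cone in \emph{some} Dressian cone does not imply that each Dressian cone meeting the variety is covered by Gr\"obner cones. The paper closes this gap with two computational checks: (i) every maximal Gr\"obner cone is contained in a $16$-dimensional (not $17$-dimensional) Dressian cone, and (ii) every $15$-dimensional Gr\"obner cone meeting the relative interior of a $16$-dimensional Dressian cone is a facet of exactly two maximal Gr\"obner cones, so the portion of $\TropGrass_0(3,8)$ inside such a relative interior has no boundary there and must fill it. You need some equivalent of step (ii) — e.g.\ this facet-pairing criterion, or a direct verification that the Gr\"obner cones assigned to a given Dressian cone cover it — before the per-cone interior-point test yields the stated orbit counts $12$ and $4\,766$.
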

\begin{proof}
  We computed the tropical Grassmannian $\TropGrass_0(3,8)$  with the Gr\"obner structure on with the methods of Section~\ref{sec:parallelFramework}.

  In order to confirm that Pl\"ucker structure on $\TropGrass_0(3,8)$ is well-defined, we tested that the relative interior of any $16$-dimensional Dressian cone is either contained in or disjoint to $\TropGrass_0(3,8)$.
  For that, we verified that any maximal Gr\"obner cone on $\TropGrass_0(3,8)$ is contained in a $16$-dimensional Dressian cone, and that every $15$-dimensional Gr\"obner cone on $\TropGrass_0(3,8)$ intersecting the relative interior of a $16$-dimensional Dressian cone is contained in exactly two maximal Gr\"obner cones.
\end{proof}

\begin{remark}\label{rem:cubic}
	There are $23$ $S_8$-orbits of $16$-dimensional cones in the Dressian $\Dress(3,8)$ whose relative interior does not intersect the tropical Grassmannian $\TropGrass_0(3,8)$. Remarkably, the following single polynomial is a witness for all Dressian orbits:
	\begin{align*}
    f\coloneqq\,&2\,p_{123} p_{467} p_{567} - p_{367} p_{567} p_{124} - p_{167} p_{467} p_{235} - p_{127} p_{567} p_{346}
		- p_{126} p_{367} p_{457}\\ &\qquad - p_{237} p_{467} p_{156} + p_{134} p_{567} p_{267} + p_{246} p_{567} p_{137} + p_{136} p_{267} p_{457}
		\in \mathcal{I}_{3,7}\subset \mathcal{I}_{3,8}.
	\end{align*}
  To be precise, for any of the aforementioned $23$ orbits $\mathcal D\subseteq\Dress(3,8)$ there exists a Dressian cone $\sigma\in\mathcal D$ such that $\initial_w(f)=2\cdot p_{123} p_{467} p_{567}$ for all relative interior points $w\in\text{Relint}(\sigma)$.

	This observation agrees with \cite[Proposition 5.5]{HampeJoswigSchroeter:2019}, which states that regular matroid subdivisions that contain matroid polytopes of extensions of the Fano matroid lead to points in the Dressian that are not on the tropical Grassmannian. Our witness polynomial $f$ is in fact a witness of the higher-dimensional Fano cone, whose $16$-dimensional faces generate the $23$ $S_8$-orbits.
\end{remark}

As an immediate consequence, we get:

\begin{theorem}\label{thm:TropGrass38TropicalBasis}
	The quadratic Pl\"ucker relations and the cubic polynomials in the $S_8$-orbit of $f$ are a tropical basis of the Pl\"ucker ideal $\mathcal I_{3,8}$. 
\end{theorem}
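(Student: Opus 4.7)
My plan is to verify both conditions required of a tropical basis: that the set $\mathcal G$ consisting of the quadratic Pl\"ucker relations together with the $S_8$-orbit of $f$ generates $\mathcal I_{3,8}$ as an ideal, and that $\bigcap_{g \in \mathcal G} \Trop(g) = \TropGrass_0(3,8)$. The ideal-generation part is immediate: the quadratic Pl\"ucker relations alone generate $\mathcal I_{3,8}$ by Definition~\ref{def:Grassmannian}, and the remaining elements $\pi \cdot f$ lie in $\mathcal I_{3,8}$ since $f \in \mathcal I_{3,7} \subseteq \mathcal I_{3,8}$ and $\mathcal I_{3,8}$ is $S_8$-invariant. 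For the tropical-intersection equality, the containment $\TropGrass_0(3,8) \subseteq \bigcap_{g} \Trop(g)$ is then standard.

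For the reverse containment my first step is to reduce to comparing with the Dressian: the $3$-term Pl\"ucker relations (a subset of the quadratic ones in $\mathcal G$) cut out $\Dress(3,8)$ as a tropical prevariety by definition, so every $w \in \bigcap_{g \in \mathcal G} \Trop(g)$ automatically lies in $\Dress(3,8)$. It therefore suffices to show that for each $w \in |\Dress(3,8)| \setminus |\TropGrass_0(3,8)|$ some $S_8$-translate of $f$ excludes $w$, i.e.\ $\initial_w(\pi \cdot f)$ is a monomial for some $\pi \in S_8$.

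By Theorem~\ref{thm:TropGrass38GroebnerStructure} and the discussion in Remark~\ref{rem:cubic}, the complement $|\Dress(3,8)| \setminus |\TropGrass_0(3,8)|$ is covered by the relative interiors of the $23$ $S_8$-orbits of $16$-dimensional extended Fano cones together with the relative interiors of the higher-dimensional ``Fano cones'' of $\Dress(3,8)$ having these as faces. For $w$ in the interior of an extended Fano cone $\sigma$, I choose $\pi \in S_8$ mapping $\sigma$ to the distinguished representative $\sigma_0$ of its orbit; Remark~\ref{rem:cubic} then gives $\initial_{\pi \cdot w}(f) = 2 p_{123} p_{467} p_{567}$, so $\initial_w(\pi^{-1} \cdot f)$ is a monomial. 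For $w$ in the interior of a higher-dim Fano cone $\rho$ with an extended Fano face $\sigma$, I pick $u \in \text{Relint}(\sigma)$ and apply the previous step to $u$; the elementary face-to-star identity $\initial_w(g) = \initial_{w-u}(\initial_u(g))$ preserves monomiality, so $\initial_w(\pi^{-1} \cdot f)$ equals the same monomial as at $u$.

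The main obstacle is the structural claim underpinning the previous paragraph: that the bad region of $\Dress(3,8)$ is genuinely exhausted by the relative interiors of these $23$ $16$-dim orbits and their containing higher-dim Fano cones. Two sub-claims must hold: every $16$-dim Dressian cone whose relative interior misses $|\TropGrass_0(3,8)|$ is among the $23$ bad orbits, and every higher-dim Dressian cone with the same property has at least one bad $16$-dim face so that the face-to-star argument applies. Neither is purely combinatorial from the excerpt; both rest on the cone-by-cone computation that produced the Gr\"obner and coarsest Pl\"ucker structures on $\TropGrass_0(3,8)$ during the proof of Theorem~\ref{thm:TropGrass38GroebnerStructure}, and would be verified in the same computational pass.
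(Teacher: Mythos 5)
Your overall strategy matches the paper's: reduce the prevariety to the Dressian via the $3$-term relations, then show that every point of $\Dress(3,8)\setminus\TropGrass_0(3,8)$ is excluded by some $S_8$-translate of $f$, using the computational classification of the $23$ bad $16$-dimensional orbits from Theorem~\ref{thm:TropGrass38GroebnerStructure} and Remark~\ref{rem:cubic}. The ideal-generation step, the forward containment, and the treatment of the $16$-dimensional cones are all fine.

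The gap is in your treatment of the higher-dimensional Dressian cones. First, the ``face-to-star identity'' you invoke is not valid as stated: the correct identity is $\initial_{w-u}(\initial_u(g))=\initial_{u+\epsilon(w-u)}(g)$ for sufficiently small $\epsilon>0$, and this equals $\initial_w(g)$ only when $w$ and $u$ select nested faces of the Newton polytope of $g$, i.e.\ only when the fan you are working with refines the normal fan of $g$. The Pl\"ucker structure on $\Dress(3,8)$ is the common refinement of the hypersurfaces of the $3$-term relations and need not refine $\Trop(\pi^{-1}\cdot f)$ for the cubic $f$, so monomiality of $\initial_u(\pi^{-1}\cdot f)$ at a point $u$ of a face does not propagate to an arbitrary $w$ in the relative interior of a larger Dressian cone; it only gives monomiality at points $u+\epsilon(w-u)$ near $u$. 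Second, your covering claim --- that every higher-dimensional Dressian cone whose relative interior misses $\TropGrass_0(3,8)$ has one of the $23$ extended Fano cones as a face --- is an additional unverified hypothesis; note that since $\TropGrass_0(3,8)$ is a $16$-dimensional subfan of $\Dress(3,8)$, the relative interior of \emph{every} $17$-dimensional Dressian cone misses it, so all of them need witnesses, not only those adjacent to the $23$ bad orbits. The paper sidesteps both issues by citing \cite[Remark 5.4]{HampeJoswigSchroeter:2019}, which states directly that the polynomials in $S_8\cdot f$ are witnesses for all $17$-dimensional cones of $\Dress(3,8)$; this external input is the piece your argument is missing, and it cannot be recovered from the $16$-dimensional case by the face-to-star reasoning alone.
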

\begin{proof}
  By definition, the Pl\"ucker relations generate the Pl\"ucker ideal $\mathcal I_{3,8}$. Recall that $\Dress(3,8)$ is $17$-dimensional. By \cite[Remark 5.4]{HampeJoswigSchroeter:2019}, the polynomials in $S_8\cdot f$ are witnesses for all $17$-dimensional cones of $\Dress(3,8)$, i.e., for every point $w$ inside a $17$-dimensional cone of $\Dress(3,8)$ there is a $\sigma\in S_8$ with $w\notin\Trop(\sigma\cdot f)$. In Theorem~\ref{thm:TropGrass38GroebnerStructure} we verified that any $16$-dimensional cone of $\Dress(3,8)$ either lies on $\TropGrass_0(3,8)$ or has a relative interior disjoint to it. In Remark~\ref{rem:cubic}, we verified that the polynomials in $S_8\cdot f$ are witnesses for the latter.
\end{proof}

During our computations, we also encountered the following phenomena, which will be relevant for Section~\ref{sec:TropGrassPlus38}. We conjecture for them to hold for arbitrary $k$ and $n$ in characteristic $0$:

\begin{theorem}\label{thm:TropGrass38Saturation}
  For any $w,v\in\TropGrass_0(3,8)$ we have
  \[w \text{ and } v \text{ lie on the same cone of } \Dress(3,8) \quad \Longleftrightarrow\quad \initial_w(\mathcal I_{3,8}):p^\infty = \initial_v(\mathcal I_{3,8}):p^\infty, \]
  where $(\cdot):p^\infty$ denotes the saturation on the product of all Pl\"ucker variables.

\end{theorem}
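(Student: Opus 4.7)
The plan is to reduce the equivalence to a finite check building on the orbit data from Theorem~\ref{thm:TropGrass38GroebnerStructure}. By definition the Dressian cone containing a point $w$ is determined by the tuple of initial forms $(\initial_w(\mathcal{P}_{I,J}))_{I,J}$ of the 3-term Pl\"ucker relations, so $w$ and $v$ lie on the same cone of $\Dress(3,8)$ precisely when these tuples agree. Thus the theorem amounts to the assertion that the saturated initial ideal is both faithful to, and determined by, this tuple.

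For the forward direction, let $\mathcal{J}_3$ denote the ideal generated by the 3-term Pl\"ucker relations. Since these relations generate $\mathcal{I}_{3,8}$ up to saturation at $p$, every $f\in\mathcal{I}_{3,8}$ satisfies $p^N f\in\mathcal{J}_3$ for some $N$; applying $\initial_w$ yields $p^N\initial_w(f)\in\initial_w(\mathcal{J}_3)$, so $\initial_w(\mathcal{I}_{3,8}):p^\infty=\initial_w(\mathcal{J}_3):p^\infty$. The task then reduces to showing that $\initial_w(\mathcal{J}_3):p^\infty$ is constant on Dressian cones, which is not formal because the initial ideal may strictly contain the ideal generated by the initial forms of $\mathcal{J}_3$'s generators. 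I would verify the reduction by iterating over $S_8$-orbits of pairs of Gr\"obner cones lying in the same Dressian cone (which are already extracted from the computation behind Theorem~\ref{thm:TropGrass38GroebnerStructure}) and comparing the saturations in \Singular.

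For the backward direction, I would argue by contrapositive. If $w$ and $v$ lie on distinct Dressian cones, some 3-term Pl\"ucker relation $\mathcal{P}$ has $\initial_w(\mathcal{P})\neq\initial_v(\mathcal{P})$. Since $\mathcal{P}$ is a trinomial and $w,v\in\Trop(\mathcal{I}_{3,8})$, each initial form is a sub-sum with at least two terms and their difference is a nonzero binomial or monomial not divisible by any Pl\"ucker variable. The goal is to propagate this discrepancy to an inequality of saturated ideals. Absent a clean structural argument, the pragmatic plan is again computational: compare the $4766$ $S_8$-orbit representatives of $16$-dimensional Dressian cones on $\TropGrass_0(3,8)$ pairwise, accelerated by sorting on cheap invariants such as Hilbert function, reduced Gr\"obner basis generator counts, or explicit membership tests for the distinguishing $\initial_w(\mathcal{P})$.

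The main obstacle is the sheer scale of the computation. Within a single Dressian orbit the Gr\"obner structure is considerably finer, so certifying equality of the saturated ideals across all contained Gr\"obner cones requires careful orbit management; across distinct Dressian orbits, one must in principle perform on the order of $4766^2$ pairwise comparisons, for which fast invariants and the parallel infrastructure of Section~\ref{sec:parallelFramework} are indispensable. A secondary concern is the cost and numerical stability of the saturation computation itself, which suggests computing distinguishing invariants before resorting to full ideal equality tests.
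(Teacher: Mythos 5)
Your forward direction matches the paper: the verification that all Gr\"obner cones inside a common Dressian cone share the same saturated initial ideal is done by explicit computation in \Singular, and your reduction to the ideal generated by the $3$-term Pl\"ucker relations (via $\mathcal I_{3,8}=\mathcal J_3:p^\infty$ and compatibility of $\initial_w$ with multiplication by the monomial $p$) is a valid, if not strictly necessary, preprocessing step.

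The divergence --- and the real gap --- is in the backward direction. The ``clean structural argument'' you declare absent does exist, and it is the substantive content of the paper's proof. If $w,v\in\TropGrass_0(3,8)$ lie in distinct Dressian cones, pick a $3$-term relation $\mathcal P=s_0+s_1+s_2$ with $\initial_w(\mathcal P)\neq\initial_v(\mathcal P)$, say $\initial_w(\mathcal P)=s_0+s_1$ and $\initial_v(\mathcal P)=s_0+s_2$ (or one of them equals $\mathcal P$ itself). If the saturations agreed, the common ideal $J$ would contain both sub-sums; since $J=\initial_w(\mathcal I_{3,8}):p^\infty$ is stable under $\initial_w(\cdot)$ and $w$ selects $s_0,s_1$ strictly over $s_2$, one gets $\initial_w(s_0+s_2)=s_0\in J$, i.e., a monomial in a saturated initial ideal of a point of the tropical variety --- contradiction. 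You do not need to ``propagate the discrepancy'' computationally at all; you only need that initial ideals on $\Trop(\mathcal I_{3,8})$ are monomial-free. Your proposed fallback of $\sim 4766^2$ pairwise comparisons of saturated ideals is not just prohibitively expensive; it is also incomplete as stated, because the theorem quantifies over \emph{all} $w,v\in\TropGrass_0(3,8)$, including points in lower-dimensional Dressian cones, which your enumeration of maximal-cone orbit representatives does not cover. The conceptual argument above handles all dimensions uniformly, which is why the paper only needs computation for the ``same cone $\Rightarrow$ same saturation'' half.
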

\begin{proof}
  The statement was proven through explicit computations in \textsc{Singular}. Note that it suffices to verify that weight vectors in the same Dressian cone have the same saturated initial ideal, because weight vectors in different Dressian cones have different saturated initial ideals:

  Let $w,v\in\TropGrass_p(k,n)$ be in two distinct Dressian cones, which means there exist a three term Pl\"ucker relation $\mathcal P=s_0+s_1+s_2$ such that $\initial_w(\mathcal P)\neq \initial_v(\mathcal P)$, and assume that $\initial_w(\mathcal I_{k,n}):p^\infty = \initial_v(\mathcal I_{k,n}):p^\infty$.
	There are two cases that might appear.
	In the first case $\initial_w(\mathcal P)\neq \mathcal P$ and $\initial_v(P)\neq \mathcal P$. Let us assume $\initial_w(\mathcal P)= s_0+s_1$ and $\initial_v(P)=s_0+s_2$, then $s_0\in \initial_w(\mathcal I_{k,n}):p^\infty = \initial_v(\mathcal I_{k,n}):p^\infty$ contradicting that $\initial_w(\mathcal I_{k,n})$ and $\initial_v(\mathcal I_{k,n})$ are monomial free.
The second case is  $\initial_w(\mathcal P)=\mathcal P$ or $\initial_v(P)= \mathcal P$, say $\initial_w(\mathcal P)= s_0+s_1+s_2$ and $\initial_v(P)=s_0+s_2$, then $s_1\in \initial_w(\mathcal I_{k,n}):p^\infty = \initial_v(\mathcal I_{k,n}):p^\infty$ again contradicting that $\initial_w(\mathcal I_{k,n})$ and $\initial_v(\mathcal I_{k,n})$ are monomial free.
\end{proof}

In general, the Gr\"obner structure on a tropical variety is far from being as coarse as possible, which incurs many iterations in the traversal of the tropical variety that might seem unnecessary. For example, there is a maximal cone in the tropical Grassmannian $\TropGrass_0(3,8)$ equipped with the Pl\"ucker structure that is refined into $2620$ maximal cones with the Gr\"obner structure.
Therefore, an important open question is whether the tropical variety can be equipped with a natural polyhedral structure that is coarser than that of the Gr\"obner fan.

Theorem~\ref{thm:TropGrass38Saturation} states that saturated initial ideals provides such a structure for the tropical Grassmannian $\TropGrass_0(k,n)$ for $k\leq 3$ and $n\leq 8$. While it is unknown whether it holds for all tropical Grassmannians over fields of characteristic $0$, the result does not generalize to arbitrary tropical varieties, as neither the set of all weight vectors which share the same saturated initial ideal nor its euclidean closure need be convex:

\begin{example}\label{ex:tropvar} Consider the homogeneous ideal
	\[ I \coloneqq  \langle x(a-b) - y(c-d), x(c-d) - y(a-b)\rangle \unlhd \CCt[a,b,c,d,x,y]. \]
	It is not hard to see that its tropical variety $\Trop(I)$ in $\RR^6$ is four-dimensional with a two-dimensional lineality space generated by $(1,1,1,1,0,0)$ and $(0,0,0,0,1,1)$. Figure~\ref{fig:singularCode} contains a quick \Singular computation which shows that the Gr\"obner subfan supported on $\Trop(I)$ consists of $12$ rays and $16$ maximal cones. Figure~\ref{fig:tropvar} furthermore illustrates the combinatorial structure of $\Trop(I)$. It shows the intersection $\Trop(I)\cap \text{Lin}(e_1,e_2,e_3,e_5)$, which removes the two-dimensional lineality space and whose maximal cones are contained in either $\text{Lin}(e_1,e_2,e_3)$ or $\text{Lin}(e_1+e_2,e_5)$. In other words, the following equality holds both in terms of sets and polyhedral complexes:
  \begin{align*}
    \Trop(I) &= \Big(\big(\Trop(I)\cap \text{Lin}(e_1,e_2,e_3)\big)\cup\big(\Trop(I)\cap \text{Lin}(e_1+e_2,e_5)\big)\Big)\\
             &\hspace{3cm} +\text{Lin}(e_1+e_2+e_3+e_4,e_5+e_6).
  \end{align*}
	The intersection $\Trop(I)\cap \text{Lin}(e_1,e_2,e_3)$ resembles the standard tropical plane in $\RR^3$ with two opposing maximal cones barycentric subdivided, while the intersection $\Trop(I)\cap \text{Lin}(e_1+e_2,e_5)$ resembles $\RR^2$ divided into octants. In the first intersection all saturated initial ideals with respect to relative interior points of maximal cones are generated by $a-b$ and $c-d$, whereas in the second intersection the saturated initial ideals with respect to relative interior points of maximal cones are distinct.
\end{example}

\lstdefinestyle{singular}{
  language=C,
  basicstyle=\scriptsize\ttfamily,
  moredelim=**[is][\color{orange}]{@o}{@o},
  moredelim=**[is][\color{blue}]{@b}{@b},
  moredelim=**[is][\color{red}]{@r}{@r},
}
\begin{figure}[t]
\begin{lstlisting}[style=singular]
> ring r = 0,(a,b,c,d,x,y),dp;
> ideal I = x*(a-b)-y*(c-d), x*(c-d)-y*(a-b);
> LIB "tropical.lib";
> tropicalVariety(I);
RAYS                            MAXIMAL_CONES
-1  0  0  0   0 0 # 0           @b{0 2}@b # Dimension 4
-1 -1  0  0  -1 0 # 1           @o{1 2}@o
@r-1 -1  0  0   0 0 # 2@r           @o{2 3}@o
-1 -1  0  0   1 0 # 3           @b{0 7}@b
 0  0  0  0  -1 0 # 4           @b{0 11}@b
 0  0  0  0   1 0 # 5           @o{1 4}@o
 0 -1  0  0   0 0 # 6           @o{3 5}@o
 0  0 -1  0   0 0 # 7           @b{2 6}@b
 1  1  0  0  -1 0 # 8           @o{4 8}@o
 @r1  1  0  0   0 0 # 9@r           @o{5 10}@o
 1  1  0  0   1 0 # 10          @b{6 7}@b
 1  1  1  0   0 0 # 11          @b{6 11}@b
                                @b{7 9}@b
LINEALITY_SPACE                 @o{8 9}@o
 1  1  1  1   0 0 # 0           @o{9 10}@o
 0  0  0  0   1 1 # 1           @b{9 11}@b
  \end{lstlisting}\vspace{-3mm}
  \caption{\Singular code for Example~\ref{ex:tropvar} (output cleaned up for clarity)}
  \label{fig:singularCode}
\end{figure}


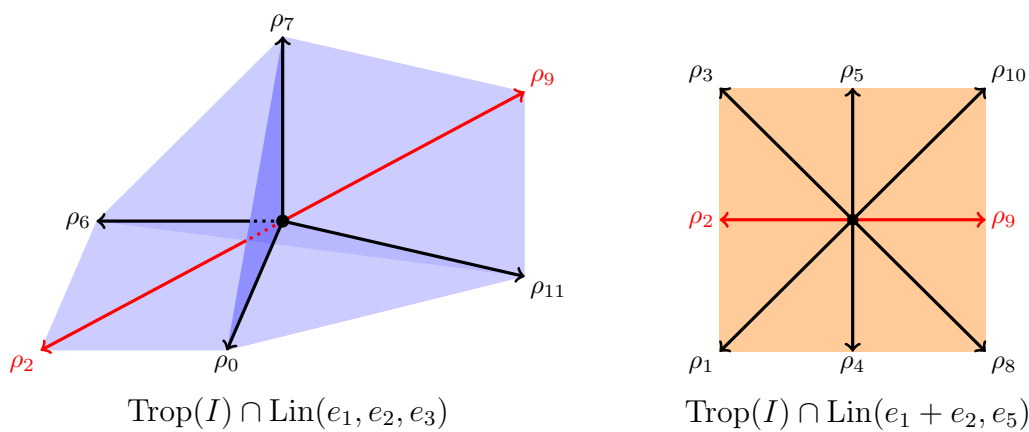
\begin{figure}[t]
  \centering
  \begin{tikzpicture}
    \node (T123) at (0,0)
    {%
      \begin{tikzpicture}[scale=2.45,x={(-0.3cm,-0.7cm)},y={(-1cm,0cm)},z={(0cm,1cm)},font=\footnotesize,inner sep=1pt,outer sep=1pt]
        \coordinate (o) at (0,0,0);
        \coordinate (r0) at ($(o)+(1,0,0)$);
        \coordinate (r2) at ($(o)+(1,1,0)$);
        \coordinate (r6) at ($(o)+(0,1,0)$);
        \coordinate (r7) at ($(o)+(0,0,1)$);
        \coordinate (r9) at ($(o)+(-1,-1,0)$);
        \coordinate (r11) at ($(o)+(-1,-1,-1)$);
        \fill[blue,opacity=0.2] (o) -- (r0) -- (r2) -- cycle;
        \fill[blue,opacity=0.3] (o) -- (r0) -- (r7) -- cycle;
        \fill[blue,opacity=0.2] (o) -- (r0) -- (r11)-- cycle;
        \fill[blue,opacity=0.2] (o) -- (r2) -- (r6) -- cycle;
        \fill[blue,opacity=0.2] (o) -- (r6) -- (r7) -- cycle;
        \fill[blue,opacity=0.1] (o) -- (r6) -- (r11)-- cycle;
        \fill[blue,opacity=0.2] (o) -- (r7) -- (r9) -- cycle;
        \fill[blue,opacity=0.2] (o) -- (r9) -- (r11)-- cycle;
        \draw[->,very thick] (o) -- (r0) node[anchor=north] {$\rho_0$};
        \draw[dotted,very thick,red] (o) -- ++(0.15,0.15);
        \draw[->,very thick,red] ($(o)+(0.15,0.15)$) -- (r2) node[anchor=north east] {$\rho_2$};
        \draw[dotted,very thick] (o) -- ++(0,0.175);
        \draw[->,very thick] ($(o)+(0,0.175)$) -- (r6) node[anchor=east] {$\rho_6$};
        \draw[->,very thick] (o) -- (r7) node[anchor=south] {$\rho_7$};
        \draw[->,very thick,red] (o) -- (r9) node[anchor=south west] {$\rho_9$};
        \draw[->,very thick] (o) -- (r11) node[anchor=north west] {$\rho_{11}$};
        \fill (o) circle (1pt);
      \end{tikzpicture}%
    };
    \node (T125) at (7.5,-0.35) {%
      \begin{tikzpicture}[scale=1.75,font=\footnotesize,x={(-1cm,0cm)},y={(0cm,-1cm)},inner sep=1pt,outer sep=1pt]
        \coordinate (o) at (0,0,0);
        \coordinate (r1) at ($(o)+(1,1)$);
        \coordinate (r2) at ($(o)+(1,0)$);
        \coordinate (r3) at ($(o)+(1,-1)$);
        \coordinate (r4) at ($(o)+(0,1)$);
        \coordinate (r5) at ($(o)+(0,-1)$);
        \coordinate (r8) at ($(o)+(-1,1)$);
        \coordinate (r9) at ($(o)+(-1,0)$);
        \coordinate (r10) at ($(o)+(-1,-1)$);
        \fill[orange!40] ($(o)+(-1,-1)$) rectangle ($(o)+(1,1)$);
        \draw[->,very thick] (o) -- (r1) node[anchor=north east] {$\rho_1$};
        \draw[->,very thick,red] (o) -- (r2) node[anchor=east] {$\rho_2$};
        \draw[->,very thick] (o) -- (r3) node[anchor=south east] {$\rho_3$};
        \draw[->,very thick] (o) -- (r4) node[anchor=north] {$\rho_4$};
        \draw[->,very thick] (o) -- (r5) node[anchor=south] {$\rho_5$};
        \draw[->,very thick] (o) -- (r8) node[anchor=north west] {$\rho_8$};
        \draw[->,very thick,red] (o) -- (r9) node[anchor=west] {$\rho_9$};
        \draw[->,very thick] (o) -- (r10) node[anchor=south west] {$\rho_{10}$};
        \fill (o) circle (1.25pt);
      \end{tikzpicture}
    };
    \node[anchor=north] at (T123.south) {$\Trop(I)\cap\text{Lin}(e_1,e_2,e_3)$};
    \node[anchor=north] at (T125.south) {$\Trop(I)\cap\text{Lin}(e_1+e_2,e_5)$};
  \end{tikzpicture}\vspace{-3mm}
  \caption{Tropical variety in Example~\ref{ex:tropvar}}\label{fig:tropvar}
\end{figure}


\section{Computing positive tropicalizations}\label{sec:positiveTropicalization}
\noindent
In this section, we recall the notion of positive tropicalization by Speyer and Williams \cite{SpeyerWilliams05}, and we introduce algorithms for testing which maximal-dimensional Gr\"obner cones lie in the positive tropicalization. These algorithms exploit the symmetry of the tropical variety even though the positive tropical variety inside of it may not be symmetric with respect to it.

We distinguish between cones whose initial ideals are binomial and cones whose initial ideals are not. For binomial cones, we state a simple combinatorial algorithm. For non-binomial cones, we reduce the problem to dimension zero which can then be tackled symbolically, numerically or with a mix of both.

\begin{convention}\label{con:positivity}
  For the remainder of the section, let $K\coloneqq \CC\{\!\{t\}\!\}$ be the field of complex Puiseux series and fix an ideal $I\unlhd K[x]\coloneqq K[x_1,\dots,x_n]$ that is generated over the subfield $R\subseteq K$ of Puiseux series whose lowest coefficient is real:
	\[ R\coloneqq \Big\{\sum_{\alpha\geq \lambda} c_\alpha t^{\alpha}\in K\bigmid 0\neq c_{\lambda}\in \RR \Big\} \cup \{0\}. \]
	In particular, any Gr\"obner basis of $I$ will consist of polynomials with coefficients in~$R$ and any initial ideal of $I$ will be generated over $\RR[x]$. We denote by $R_{>0}$ the set of complex Puiseux series whose lowest term is real and positive,

  Suppose $I$ is invariant under a group $S$ acting on $K[x]$ via signed permutation of the variables, i.e., for each group element $\sigma \in S$ and all variables $x_i\in K[x]$ there is a permutation $|\sigma|\in S_n$ and a sign $u_i\in\{\pm 1\}$ with $\sigma\cdot x_i = u_i\cdot x_{|\sigma|(i)}$. This implies that $V(I)$ is invariant under $S$ acting on $K^n$ via signed permutation of the components, and that $\Trop(I)$ is invariant under $S$ acting on $\RR^n$ via unsigned permutation of the components.
\end{convention}

\begin{definition}\label{def:positiveTropicalization}
  We define the \emph{positive tropicalization} of an ideal $I\unlhd K[x]$ to be
	\[ \Trop^+(I) \coloneqq \text{cl}\Big(\nu\big(V(I)\cap (R_{>0})^n\big)\Big)\subseteq\RR^n, \]
  where again $\nu(\cdot)$ denotes componentwise valuation and $\text{cl}(\cdot)$ denotes the closure in the euclidean topology.

  For the sake of convenience, we call a weight vector $w\in\RR^n$, an initial ideal $\initial_w(I)\unlhd\CC[x]$, and a Gr\"obner cone $C_w(I)\subseteq \Trop(I)$ \emph{positive} if $w\in\Trop^+(I)$.
\end{definition}

Note that under the Fundamental Theorem of Tropical Geometry, positive tropical varieties also admit an algebraic description:

\begin{proposition}[{\cite[Proposition 2.2]{SpeyerWilliams05}}]\label{prop:SpeyerWilliams}
  Let $I\unlhd K[x]$ be an ideal. Then
  \[ \Trop^+(I) = \Big\{w\in\RR^n \bigmid \initial_w(I)\text{ monomial free and } \initial_w(I)\cap \RR_{\geq 0}[x] = \langle 0\rangle\Big\}. \]
  In particular, $\Trop^+(I)$ is covered by all positive Gr\"obner cones if $I$ is homogeneous.
\end{proposition}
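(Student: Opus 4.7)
The plan is to prove the two inclusions separately and then deduce the Gr\"obner-cone covering statement from the algebraic characterization.

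To establish $\Trop^+(I) \subseteq \{w : \initial_w(I) \text{ monomial free},\ \initial_w(I)\cap \RR_{\geq 0}[x] = \langle 0\rangle\}$, I would start from a point $z \in V(I) \cap (R_{>0})^n$ and expand each coordinate as $z_i = t^{\nu(z_i)}(a_i + \text{higher-order terms})$ with $a_i \in \RR_{>0}$. Setting $w = \nu(z)$ and developing $f(z) = 0$ for arbitrary $f \in I$, the leading contribution is precisely $\initial_w(f)(a)$, which must therefore vanish, so $a \in V(\initial_w(I)) \cap \RR_{>0}^n$. This existence of a positive real common root rules out monomials in $\initial_w(I)$ (monomials do not vanish on points with nonzero coordinates) and nonzero elements of $\RR_{\geq 0}[x]$ (such polynomials are strictly positive on $\RR_{>0}^n$). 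Both conditions are closed in $w$, so they persist under the closure operation in Definition~\ref{def:positiveTropicalization}.

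For the reverse inclusion, given $w$ satisfying both algebraic conditions, my plan is a two-step lift. First I would produce a positive real point $a \in V(\initial_w(I)) \cap \RR_{>0}^n$, and this is the step I expect to be the main obstacle. It requires a real-algebraic separation result ensuring that the absence of a positivity certificate in $\initial_w(I) \cap \RR_{\geq 0}[x]$ forces a positive common root to exist; conceptually, substituting $x_i = e^{y_i}$ translates positivity on $\RR_{>0}^n$ into a Hahn--Banach / Farkas-type separation on the Newton polytope side, while the algebraic obstruction can be removed via a variant of the real Nullstellensatz applied to the homogeneous real variety cut out by $\initial_w(I)$. Second, I would lift $a$ to a Puiseux series solution $z \in V(I) \cap (R_{>0})^n$ with $\nu(z) = w$ by a Newton--Puiseux iteration, a real-coefficient variant of the constructive half of the Fundamental Theorem, where positivity of $a$ guarantees that the successive correction terms can be chosen in $R$ without destroying the positivity of the leading terms.

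For the final assertion, assume $I$ is homogeneous. By Proposition~\ref{prop:structureTheorem}, $\Trop(I)$ is the support of a finite union of Gr\"obner cones, and the forward inclusion yields $\Trop^+(I) \subseteq \Trop(I)$. Since $\initial_w(I)$ is constant on the relative interior of each Gr\"obner cone, the algebraic characterization shows that every such cone is either entirely positive on its interior or not at all. Taking closures, $\Trop^+(I)$ coincides with the union of the closures of the positive Gr\"obner cones.
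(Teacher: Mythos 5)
The paper does not prove this proposition at all --- it is quoted verbatim from Speyer--Williams \cite[Proposition 2.2]{SpeyerWilliams05} --- so the relevant comparison is with the architecture of that proof, which your proposal does reproduce correctly in outline: positive Puiseux roots reduce to positive real roots of $\initial_w(I)$, which rule out both monomials and nonnegative-coefficient elements; conversely one needs a positive real root of $\initial_w(I)$ and then a lifting. The lifting step is unproblematic: it is exactly the surjectivity half of Lemma~\ref{lem:initialZeroes} (i.e.\ \cite[Proposition 3.2.11]{MS15}), and since $R_{>0}$ only constrains the \emph{lowest} coefficient of each coordinate, the standard lift with prescribed residue $a\in\RR_{>0}^n$ already lands in $(R_{>0})^n$; no ``real-coefficient Newton--Puiseux with positivity preserved'' is needed.

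The genuine gap is the step you yourself flag as the main obstacle: deducing $V(\initial_w(I))\cap\RR_{>0}^n\neq\emptyset$ from $\initial_w(I)\cap\RR_{\geq 0}[x]=\langle 0\rangle$. This is not a routine separation argument; it is a theorem of Einsiedler and Tuncel (``When does a polynomial ideal contain a positive polynomial?''), and it is precisely the external input that Speyer and Williams invoke at this point. Your sketch --- substitute $x_i=e^{y_i}$, apply Hahn--Banach/Farkas on the Newton-polytope side, remove the obstruction by ``a variant of the real Nullstellensatz'' --- does not close it: the real Nullstellensatz and Positivstellensatz produce certificates built from sums of squares and products of the constraints $x_i>0$, and converting such a certificate into a single nonzero element of the \emph{ideal} with nonnegative coefficients is exactly the nontrivial content of the Einsiedler--Tuncel result. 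Two smaller points: your claim that both defining conditions are ``closed in $w$'' (needed to pass to the closure in Definition~\ref{def:positiveTropicalization}) deserves an argument, e.g.\ via $\initial_v(\initial_w(I))=\initial_{w+\epsilon v}(I)$ applied to a nonnegative-coefficient witness; and under the paper's \texttt{max}-convention the leading contribution of $f(z)$ is governed by $\initial_{-\nu(z)}(f)$, so your choice $w=\nu(z)$ carries a sign inconsistency (one that Definition~\ref{def:positiveTropicalization} itself already harbors relative to Definition~\ref{def:tropicalVariety}).
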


As an easy corollary, we get that positivity only depends on the saturated initial ideals, which will be relevant for Section~\ref{sec:TropGrassPlus38}.

\begin{corollary}\label{cor:positivityAndSaturation}
  Let $I\unlhd K[x]=K[x_1,\dots,x_n]$ be an ideal and let $w,v\in\RR^n$ be two weight vectors with $\initial_w(I):(\prod_{i=1}^n x_i)^\infty=\initial_v(I):(\prod_{i=1}^n x_i)^\infty$. Then
  \[ w\in \Trop^+(I)\quad \Longleftrightarrow\quad v\in \Trop^+(I)\]
\end{corollary}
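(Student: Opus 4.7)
By Proposition~\ref{prop:SpeyerWilliams}, membership of $w$ in $\Trop^+(I)$ is equivalent to the conjunction of two conditions on the initial ideal $\initial_w(I)$: (a) it contains no monomial, and (b) it intersects $\RR_{\geq 0}[x]$ only in $\langle 0\rangle$. The plan is to show that both conditions are determined entirely by the saturated initial ideal $\initial_w(I):(\prod_{i=1}^n x_i)^\infty$. Once this is established, the corollary follows immediately from the hypothesis that $w$ and $v$ have the same saturation.

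For condition (a), the key observation is that $\initial_w(I)$ contains a monomial if and only if $1$ lies in its saturation with respect to $\prod_{i=1}^n x_i$: one direction is the definition of saturation, while the converse follows because $1$ in the saturation means $(\prod x_i)^N \in \initial_w(I)$ for some $N$, which is itself a monomial. Hence monomial-freeness of $\initial_w(I)$ is equivalent to the saturated ideal being a proper ideal, a property that depends only on the saturation.

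For condition (b), I would argue both inclusions. Since $\initial_w(I)$ is contained in its saturation, a nontrivial nonnegative element of $\initial_w(I)$ is also a nontrivial nonnegative element of the saturation. Conversely, given a nonzero $f \in (\initial_w(I):(\prod x_i)^\infty) \cap \RR_{\geq 0}[x]$, some multiple $(\prod x_i)^N \cdot f$ lies in $\initial_w(I)$; this multiple is still nonzero and still has nonnegative coefficients, so it witnesses a nontrivial intersection of $\initial_w(I)$ with $\RR_{\geq 0}[x]$. Thus condition (b) for $\initial_w(I)$ is equivalent to the same condition for its saturation.

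Combining these two reductions, the property $w\in\Trop^+(I)$ depends only on $\initial_w(I):(\prod x_i)^\infty$, and the corollary follows. There is no real obstacle here; the proof is essentially an unwinding of Proposition~\ref{prop:SpeyerWilliams} together with the elementary fact that multiplying by monomials preserves both membership in the initial ideal (modulo saturation) and nonnegativity of coefficients.
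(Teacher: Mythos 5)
Your proof is correct and follows essentially the same route as the paper: the paper's proof simply asserts the two facts (monomial-freeness and triviality of the intersection with $\RR_{\geq 0}[x]$ each depend only on the saturation) that you spell out and justify. Your added details — that a monomial in $\initial_w(I)$ forces $(\prod x_i)^N\in\initial_w(I)$, and that multiplying by $(\prod x_i)^N$ preserves nonvanishing and nonnegativity of coefficients — are exactly the right justifications the paper leaves implicit.
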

\begin{proof}
  The statement follows directly from the following two facts:
  \begin{itemize}[leftmargin=*]
  \item $\initial_w(I)$ is monomial free if and only if $\initial_w(I):(\prod_{i=1}^n x_i)^\infty\neq\langle1\rangle$,
  \item $\initial_w(I)\cap \RR_{\geq 0}[x] = \langle 0\rangle$ if and only if $\initial_w(I):(\prod_{i=1}^n x_i)^\infty\cap \RR_{\geq 0}[x] = \langle 0\rangle$. \qedhere
  \end{itemize}
\end{proof}

\subsection{Binomial cones}
We decide positivity of binomial cones using the description of Proposition~\ref{prop:SpeyerWilliams}. We begin by recalling a well-known result on the Gr\"obner bases of binomial ideals, and derive an easy test for positivity of binomial ideals from it.

\begin{proposition}[{\cite[Proposition 1.1]{EisenbudSturmfels96}}]\label{prop:binomialGroebnerBasis}
  Any reduced Gr\"obner basis of a binomial ideal consists solely of binomials.
\end{proposition}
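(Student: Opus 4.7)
The plan is to derive the statement from two stability properties of the class of (at most) two-term polynomials under the operations that drive Buchberger's algorithm: forming $s$-polynomials, and polynomial division. Both properties are elementary and follow from direct computation; once they are in place, a standard induction on Buchberger's algorithm yields a (not necessarily reduced) Gr\"obner basis consisting of binomials, and a further round of inter-reduction produces the reduced one.

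First I would observe that the $s$-polynomial of two binomials is again a binomial. If $f = ax^\alpha - bx^\beta$ and $g = cx^\gamma - dx^\delta$ have respective leading terms $ax^\alpha$ and $cx^\gamma$, then by construction $\spoly(f,g)$ cancels these leading terms and leaves only scaled multiples of $x^\beta$ and $x^\delta$. Second, I would check that one step of polynomial division of a binomial $p = c_1 x^\mu - c_2 x^\nu$ by a binomial $h = ax^\alpha - bx^\beta$ with $x^\alpha \mid x^\mu$ yields
\[ p - \tfrac{c_1}{a}\,x^{\mu-\alpha}\cdot h \;=\; -c_2\, x^\nu + \tfrac{c_1 b}{a}\, x^{\beta+\mu-\alpha}, \]
which again has at most two terms. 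Iterating, the normal form of a binomial modulo a set of binomials is a binomial (or zero).

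With these two facts, Buchberger's algorithm started from the given binomial generators produces only binomials at every step, and therefore terminates with a (not necessarily reduced) binomial Gr\"obner basis $G$. To pass from $G$ to the reduced Gr\"obner basis I would apply the usual inter-reduction: normalize each leading coefficient to one, and replace each basis element by its normal form modulo the others. These reductions are again polynomial divisions of binomials by binomials, so the reduced Gr\"obner basis consists entirely of binomials. The only mildly delicate point, which I expect to be the main thing to verify carefully, is the possibility of monomial collisions during reduction: if the two terms of an intermediate binomial happen to share the same monomial, they may cancel, leaving at worst a monomial. Such a monomial counts as a binomial in the Eisenbud--Sturmfels convention, and any further reduction of a monomial by a binomial produces another monomial (or zero), so the stability property is preserved throughout, and the claim follows.
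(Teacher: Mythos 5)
The paper offers no proof of this proposition, deferring entirely to \cite[Proposition~1.1]{EisenbudSturmfels96}, and your argument is precisely the standard one given there: closure of the class of at-most-two-term polynomials under $s$-polynomials and division steps, hence Buchberger's algorithm and the subsequent inter-reduction never leave that class. You also correctly isolate the one delicate point — cancellation of colliding terms can leave a monomial — and resolve it via the Eisenbud--Sturmfels convention that monomials count as binomials, which is indeed the convention under which the statement is true (e.g.\ the reduced Gr\"obner basis of $\langle x-y,\,x+y\rangle$ is $\{x,y\}$), so the proof is correct.
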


\begin{lemma}\label{lem:positiveBinomialGroebnerBasis}
  Let $J\unlhd \RR[x]$ be a non-trivial binomial ideal, $G\subseteq J$ a reduced Gr\"obner basis with respect to any ordering $>$. Then
  \[ J\cap \RR_{\geq 0} [x] = \langle 0\rangle \quad\Longleftrightarrow\quad G\cap \RR_{\geq 0} [x] = \emptyset. \]
\end{lemma}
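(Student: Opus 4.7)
The forward implication is immediate: any $g \in G \cap \RR_{\geq 0}[x]$ lies in $J \cap \RR_{\geq 0}[x] = \langle 0\rangle$, but a reduced Gr\"obner basis contains no zero polynomial.

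For the converse, my plan is first to invoke Proposition~\ref{prop:binomialGroebnerBasis} and assume every $g \in G$ is a monic binomial. The hypothesis $G \cap \RR_{\geq 0}[x] = \emptyset$ then immediately rules out monic monomials in $G$, and forces every $g \in G$ into the form $g = x^{\alpha_1} - c\,x^{\alpha_2}$ with $\LM(g) = x^{\alpha_1}$ and a strictly positive real constant $c > 0$: if the constant were non-positive, the binomial $x^{\alpha_1} + |c|\, x^{\alpha_2}$ would lie in $\RR_{\geq 0}[x]$.

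The key step is a sign-tracking claim for normal forms modulo $G$: for every monomial $x^\beta \in \RR[x]$, the normal form $\mathrm{NF}(x^\beta)$ has the form $\lambda_\beta \cdot x^{\phi(\beta)}$ with $\lambda_\beta > 0$ and $x^{\phi(\beta)}$ a standard monomial. I would establish this by Noetherian induction on the monomial order $>$: if $x^\beta$ is already standard, the claim is trivial with $\lambda_\beta = 1$; otherwise some $\LM(g) = x^{\alpha_1}$ divides $x^\beta$, and a single reduction step with $g = x^{\alpha_1} - c\,x^{\alpha_2}$ sends $x^\beta$ to $c \cdot x^{(\beta - \alpha_1) + \alpha_2}$, a strictly positive scalar times a strictly smaller monomial, to which the induction hypothesis applies.

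Given this claim, the conclusion is routine. For any $f = \sum_\beta a_\beta x^\beta \in J \cap \RR_{\geq 0}[x]$ one has $a_\beta \geq 0$, and linearity of $\mathrm{NF}$ gives
\[ 0 \;=\; \mathrm{NF}(f) \;=\; \sum_\beta a_\beta \lambda_\beta \, x^{\phi(\beta)}. \]
Grouping by standard monomial expresses the coefficient of each $x^\gamma$ as a sum of non-negative numbers $a_\beta \lambda_\beta$ equal to zero, which forces $a_\beta = 0$ for every $\beta$ since each $\lambda_\beta$ is strictly positive. The main obstacle, and the only place the positivity hypothesis on $G$ is genuinely used, is maintaining the strict positivity of the scaling constants throughout the cascade of reductions defining $\mathrm{NF}(x^\beta)$; once that is secured, the rest amounts to comparing non-negative coefficients in a linear combination of linearly independent standard monomials.
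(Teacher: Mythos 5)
Your proof is correct. The forward direction is the same as the paper's. For the converse, both arguments rest on the same underlying observation---by Proposition~\ref{prop:binomialGroebnerBasis} and the hypothesis, every element of $G$ is a normalized binomial with one positive and one negative coefficient, so Gr\"obner reduction cannot destroy sign information---but you package this differently. The paper argues by contradiction: it shows that forming the S-polynomial of any $f\in\RR_{\geq 0}[x]$ with any $g\in G$ again lands in $\RR_{\geq 0}[x]$, views the division of a hypothetical nonzero $f\in J\cap\RR_{\geq 0}[x]$ by $G$ as a nested chain of such S-polynomials terminating in $0$, and derives a contradiction from the last nonzero polynomial in the chain lying in $\RR_{\geq 0}[x]$ while being a multiple of a mixed-sign binomial. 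You instead give a direct argument: the normal form of each monomial is a strictly positive scalar times a standard monomial (by induction along the well-ordering $>$, using that each reduction step multiplies by the scalar $c>0$), and then linearity of the normal form together with linear independence of the standard monomials forces every coefficient of $f$ to vanish. Your version is arguably cleaner: it avoids the paper's somewhat informal identification of the division algorithm with a chain of S-polynomials, it isolates exactly where positivity of the non-leading coefficients enters, and as a byproduct it shows directly that $J$ contains no monomials. The only facts you use without comment---uniqueness and $\RR$-linearity of the normal form modulo a reduced Gr\"obner basis, and that $>$ is a well-ordering---are standard for global orderings, which is the setting the paper implicitly works in as well.
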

\begin{proof}
  \begin{description}[leftmargin=*]
  \item[$\Rightarrow$] Trivial, as $G\subseteq J$ and $0\not\in G$.
  \item[$\Leftarrow$] By Proposition~\ref{prop:binomialGroebnerBasis}, the Gr\"obner basis $G$ consists solely of binomials. And by definition, each element of $G$ is normalized. So let $G$ contain only normalized binomials whose non-leading coefficient is negative. Then the S-polynomial of any polynomial $f\in  \RR[x]$ with respect to a Gr\"obner basis element $g\in G$,
    \[ \spoly_>(f,g) \coloneqq \frac{\lcm(\LM_>(f),\LM_>(g))}{\LM_{>}(f)}\cdot f - \frac{\lcm(\LM_>(f),\LM_>(g))}{\LM_{>}(g)}\cdot g,\]
    will preserve the parity of $f$, i.e., if $f\in \RR_{\geq 0}[x]$ then also $\spoly_>(f,g)\in \RR_{\geq 0}[x]$, possibly $\spoly_>(f,g)=0$.

    Now assume there is a non-zero polynomial $f\in J\cap \RR_{\geq 0} [x]$. As $G$ is a Gr\"obner basis, dividing $f$ with respect to $G$ will yield remainder $0$. However, the division with respect to $G$ is merely a nested chain of S-polynomials of $f$ with respect to a sequence $(g_1,\dots,g_r)$ of possibly repeating elements $g_i\in G$:
    \[ \spoly_>(\underbrace{\spoly_>(\dots\spoly_>(f,g_1)\dots,g_{r-1})}_{\eqqcolon f_r\neq 0},g_r)=0. \]
    Abbreviating the penultimate non-zero S-polynomial with $f_r$, this implies two things: First, as $\spoly_>(f_r,g_r)=0$, $f_r$ must be a multiple of $g_r$.
    Second, because spoly preserves the parity of $f$ and $f\in \RR_{\geq 0}[x]$, we also have $f_r\in \RR_{\geq 0}[x]$.
    Both together contradict that $G$ contains only binomials with a positive and a negative coefficient.\qedhere
  \end{description}
\end{proof}

\begin{proposition}\label{prop:positivityBinomial}
  Let $C_w(I)\subseteq\Trop(I)$ be a maximal cone with $\initial_w(I)$ binomial, and let $G\subseteq\initial_w(I)$ be a reduced Gr\"obner basis with respect to any ordering $>$. Then for any element $\sigma\in S$ we have
  \[ \sigma\cdot C_w(I)\subseteq\Trop^+(I)\quad\Longleftrightarrow\quad \sigma \cdot G\cap \RR_{\geq 0} [x] = \emptyset \text{ and } \sigma\cdot G\cap \RR_{\leq 0}[x]=\emptyset. \]
\end{proposition}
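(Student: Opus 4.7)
The plan is to translate the geometric positivity condition into algebraic data about $\sigma\cdot \initial_w(I)$ via Proposition~\ref{prop:SpeyerWilliams}, and then extract that data from $\sigma \cdot G$ using Lemma~\ref{lem:positiveBinomialGroebnerBasis}, carefully tracking the sign bookkeeping introduced by the signed permutation action.

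First I would pick an interior point $v \in \text{Relint}(\sigma \cdot C_w(I))$ and use Proposition~\ref{prop:SpeyerWilliams} to reduce $\sigma\cdot C_w(I)\subseteq \Trop^+(I)$ to the statement $\initial_v(I) \cap \RR_{\geq 0}[x] = \langle 0\rangle$ (monomial-freeness is automatic since $v$ lies in $\Trop(I)$). A direct computation on the exponent/coefficient data shows that the action of $\sigma$ intertwines valuations, weight vectors, and initial forms in the expected way: $\initial_{|\sigma|\cdot w}(\sigma\cdot f) = \sigma\cdot \initial_w(f)$ for every $f\in K[x]$, and hence $\initial_v(I) = \sigma\cdot \initial_w(I)$ for $v = |\sigma|\cdot w$. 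The key observation here is that each $u_i\in\{\pm 1\}$ has valuation $0$, so the signs drop out of the weight-maximization step and only rearrange coefficients at the end.

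Next I would transfer $G$ through $\sigma$. Since $\sigma\cdot G$ generates $\sigma\cdot \initial_w(I)$ and is a Gr\"obner basis with respect to the ordering $>^{\sigma}$ defined by $x^{|\sigma|(\alpha)} >^{\sigma} x^{|\sigma|(\beta)} \Leftrightarrow x^\alpha > x^\beta$, and since $\sigma$ preserves the binomial shape, $\sigma\cdot G$ consists of binomials. The catch is that $\sigma\cdot G$ need not be \emph{reduced} in the normalized sense: a reduced element $x^\alpha + c\,x^\beta \in G$ is sent to $u^\alpha x^{|\sigma|(\alpha)} + c\,u^\beta x^{|\sigma|(\beta)}$ whose leading coefficient $u^\alpha = \prod u_i^{\alpha_i}$ can be $-1$. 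Multiplying each such element by $u^\alpha$ produces a reduced Gr\"obner basis $G' = \{u^\alpha \cdot \sigma\cdot g : g\in G\}$ of $\sigma\cdot \initial_w(I)$ with respect to $>^\sigma$. Applying Lemma~\ref{lem:positiveBinomialGroebnerBasis} to $G'$ then gives
\[ \sigma\cdot\initial_w(I)\cap\RR_{\geq 0}[x]=\langle 0\rangle \ \Longleftrightarrow\ G'\cap \RR_{\geq 0}[x]=\emptyset. \]

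Finally, I would translate the condition on $G'$ back to $\sigma\cdot G$: an element $g'\in G'$ lies in $\RR_{\geq 0}[x]$ exactly when its unscaled partner $\sigma\cdot g$ lies in $\RR_{\geq 0}[x] \cup \RR_{\leq 0}[x]$. Hence $G'\cap \RR_{\geq 0}[x]=\emptyset$ is equivalent to $\sigma\cdot G\cap \RR_{\geq 0}[x]=\emptyset$ \emph{and} $\sigma\cdot G\cap \RR_{\leq 0}[x]=\emptyset$, which yields the claimed equivalence. I expect the main obstacle to be purely bookkeeping: making the identifications $\initial_{|\sigma|\cdot w}(I) = \sigma\cdot\initial_w(I)$ and $\LM_{>^\sigma}(\sigma\cdot g) = \sigma\cdot\LM_{>}(g)$ precise, and verifying that the necessary normalization $G \leadsto G'$ is exactly what forces both sign conditions in the statement rather than just one.
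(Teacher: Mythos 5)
Your proposal is correct and follows essentially the same route as the paper: transport $G$ to a Gr\"obner basis of $\sigma\cdot\initial_w(I)$ with respect to the pushed-forward ordering, observe it is binomial and reduced only up to normalization by the signs $u^\alpha\in\{\pm1\}$, and apply Lemma~\ref{lem:positiveBinomialGroebnerBasis}. You merely spell out two steps the paper leaves implicit — the reduction of cone positivity to $\initial_{\sigma\cdot w}(I)\cap\RR_{\geq0}[x]=\langle0\rangle$ via Proposition~\ref{prop:SpeyerWilliams}, and the fact that the normalization $G\leadsto G'$ is precisely what turns the one-sided condition of the lemma into the two-sided condition in the statement.
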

\begin{proof}
  As $G\subseteq \initial_w(I)$ is a reduced Gr\"obner basis with respect to the ordering $>$, $\sigma\cdot G\subseteq \initial_{\sigma\cdot w}(I)$ will be a Gr\"obner basis with respect to the ordering $>_\sigma$ defined by
  \[ x^\alpha >_\sigma x^\beta\quad:\Longleftrightarrow\quad x^{\sigma\cdot\alpha} > x^{\sigma\cdot \beta}, \]
  where $\sigma$ acts on the exponent vectors as it does on the weight space $\RR^n$.

  By Proposition~\ref{prop:binomialGroebnerBasis}, $G$ consists solely of binomials and hence so does $\sigma\cdot G$. Moreover, $\sigma\cdot G$ is reduced up to normalization. The claim then follows from Lemma~\ref{lem:positiveBinomialGroebnerBasis}.
\end{proof}

\begin{example}
  Consider the Grassmannian $\Grass(2,5)$, whose Pl\"ucker ideal $I$ is generated by three $3$-term Pl\"ucker relations:
  \begin{align*}
    I \coloneqq  \langle & p_{12}p_{34}-p_{13}p_{24}+p_{14}p_{23},\, p_{02}p_{34}-p_{03}p_{24}+p_{04}p_{23},\, p_{01}p_{34}-p_{03}p_{14}+p_{04}p_{13},\\
                 & p_{01}p_{24}-p_{02}p_{14}+p_{04}p_{12},\, p_{01}p_{23}-p_{02}p_{13}+p_{03}p_{12} \rangle \unlhd \CC\{\!\{t\}\!\}[p_{ij}\!\mid\! 0\!\leq\!i\!<\!j\!\leq\! 4].
  \end{align*}
	The Petersen Graph in Figure~\ref{fig:Grass25} illustrates the combinatorics of the tropical Grassmannian $\TropGrass_0(2,5)$ modulo its $5$-dimensional lineality space generated by
	\[ \sum_{\substack{0\leq i<j\leq 4\\i\neq k\neq j}} e_{ij} \quad \text{for } k=0,\ldots,4, \]
	where $e_{ij}$ denotes the unit vector in direction of $p_{ij}$ in the weight space. Each vertex denotes a ray generated by the negative of the inscribed unit vector, and each edge denotes a maximal cone spanned by two rays. The edges in red are the maximal cones inside the positive tropical Grassmannian $\TropGrass^+(2,5)$.
  \begin{figure}[th]
    \centering
    \begin{tikzpicture}[font=\scriptsize]
      \useasboundingbox (-5,-3) rectangle (5,3.5);
      \node[draw,circle,inner sep=2pt] (e01) at (90:30mm) {$e_{01}$};
      \node[draw,circle,inner sep=2pt] (e23) at (162:30mm) {$e_{23}$};
      \node[draw,circle,inner sep=2pt] (e04) at (234:30mm) {$e_{04}$};
      \node[draw,circle,inner sep=2pt] (e12) at (306:30mm) {$e_{12}$};
      \node[draw,circle,inner sep=2pt] (e34) at (12:30mm) {$e_{34}$};
      \node[draw,circle,inner sep=2pt] (e24) at (90:13mm) {$e_{24}$};
      \node[draw,circle,inner sep=2pt] (e14) at (162:13mm) {$e_{14}$};
      \node[draw,circle,inner sep=2pt] (e13) at (234:13mm) {$e_{13}$};
      \node[draw,circle,inner sep=2pt] (e03) at (306:13mm) {$e_{03}$};
      \node[draw,circle,inner sep=2pt] (e02) at (12:13mm) {$e_{02}$};
      \draw[red,very thick] (e01) -- node (w) {} (e23) -- node (w14) {} (e04) -- (e12) -- (e34) -- (e01);
      \draw[thick] (e01) -- node (w34) {} (e24)
      (e23) -- (e14)
      (e04) -- (e13)
      (e12) -- (e03)
      (e34) -- (e02)
      (e13) -- (e24) -- (e03) -- (e14) -- (e02) -- (e13);
      \fill[red] (w) circle (2.5pt);
      \node[above] at (w) {$w$};
      \fill[red] (w14) circle (2.5pt);
      \fill (w34) circle (2.5pt);
      \path (w) edge[dashed,bend right=30,->] node[below,xshift=-1mm,yshift=0.5mm,font=\tiny] {$(34)$} (w34);
      \path (w) edge[dashed,out=135,in=200,min distance=30mm,->] node[left,xshift=0.5mm,font=\tiny] {$(14)$} (w14);
    \end{tikzpicture}\vspace{-3mm}
    \caption{The tropical Grassmannian $\TropGrass^+(2,5)$ and its positive cones.}
    \label{fig:Grass25}
  \end{figure}
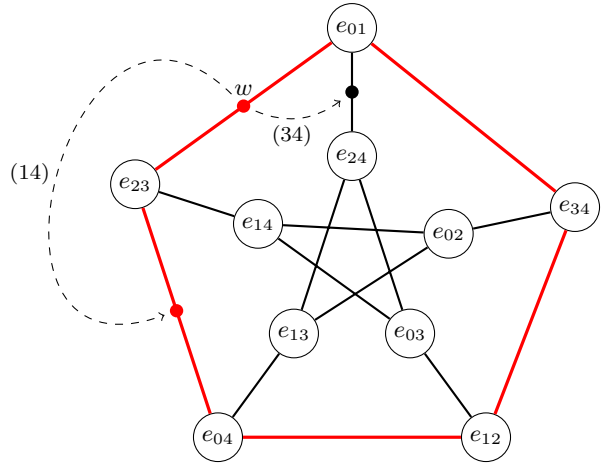

  The weight vector $w\coloneqq e_{01}+e_{23}$ lies in the interior of a maximal cone $C_w(I)$. Its corresponding initial ideal $\initial_w(I)$ is generated by the following binomial reduced Gr\"obner basis:
  \[ G\coloneqq \{
    p_{02}p_{13}-p_{12}p_{03},\,
    p_{02}p_{14}-p_{12}p_{04},\,
    p_{02}p_{34}-p_{03}p_{24},\,
    p_{03}p_{14}-p_{13}p_{04},\,
    p_{12}p_{34}-p_{13}p_{24} \}. \]
  Thus, according to Lemma~\ref{lem:positiveBinomialGroebnerBasis}, $w$ is contained in $\TropGrass^+(2,5)$.

  Moreover, consider the two transpositions $(14),(34)\in S_5$, which act on the coordinate ring as follows:
  \begin{align*}
    (14):
    \begin{cases}
      p_{01}\mapsto p_{04}, & p_{13}\mapsto -p_{34},\\
      p_{02}\mapsto p_{02}, & p_{14}\mapsto -p_{14},\\
      p_{03}\mapsto p_{03}, & p_{23}\mapsto p_{23},\\
      p_{04}\mapsto p_{01}, & p_{24}\mapsto -p_{12},\\
      p_{12}\mapsto -p_{24}, & p_{34}\mapsto -p_{13},
    \end{cases}
                          &&\text{and}&&
    (34):
    \begin{cases}
      p_{01}\mapsto p_{01}, & p_{13}\mapsto p_{14},\\
      p_{02}\mapsto p_{02}, & p_{14}\mapsto p_{13},\\
      p_{03}\mapsto p_{04}, & p_{23}\mapsto p_{24},\\
      p_{04}\mapsto p_{03}, & p_{24}\mapsto p_{23},\\
      p_{12}\mapsto p_{12}, & p_{34}\mapsto -p_{34}.
    \end{cases}
  \end{align*}
  Applying them to $G$ yields
  \[
    (14)\cdot G =
    \left\{
      \begin{array}{l}
        -p_{02}p_{34}+p_{24}p_{03},\\
        -p_{02}p_{14}+p_{24}p_{01},\\
        -p_{02}p_{13}+p_{03}p_{12},\\
        -p_{03}p_{14}+p_{34}p_{01},\\
        p_{24}p_{13}-p_{34}p_{12}
      \end{array}
    \right\} \quad\text{and}\quad
    (34)\cdot G =
    \left\{
      \begin{array}{l}
        p_{02}p_{14}-p_{12}p_{04},\\
        p_{02}p_{13}-p_{12}p_{03},\\
        -p_{02}p_{34}-p_{04}p_{23},\\
        p_{04}p_{13}-p_{14}p_{03},\\
        -p_{12}p_{34}-p_{14}p_{23}
      \end{array}
    \right\}\]
  Hence, by Proposition~\ref{prop:positivityBinomial}, $(14)\cdot w=e_{04}+e_{23}$ lies on the positive tropical Grassmannian $\TropGrass^+(2,5)$, whereas $(34)\cdot w=e_{01}+e_{24}$ does not.
\end{example}

From Proposition~\ref{prop:positivityBinomial}, we obtain the following simple algorithm:

\begin{algorithm}[Positivity of binomial cones]\label{alg:positivityBinomial}\
  \begin{algorithmic}[1]
    \REQUIRE{$(G,S)$, where
      \begin{itemize}[leftmargin=*]
      \item $G\subseteq\initial_w(I)\unlhd \CC[x]$, a reduced Gr\"obner basis of a binomial initial ideal $\initial_w(I)$,
      \item $S$, a group as in Convention~\ref{con:positivity}.
      \end{itemize}}
    \ENSURE{$S^{+}_w(I)\coloneqq \{\sigma\in S\mid \sigma\cdot C_w(I)\subseteq\Trop^{+}(I)\}$, a set of symmetries which map $C_w(I)$ into $\Trop^{+}(I)$.}
    \RETURN{$\bigcap_{g\in G} \{ \sigma\in S\mid \sigma\cdot g \text{ has coefficients with mixed parity}\}$}
  \end{algorithmic}
\end{algorithm}

\begin{remark}\label{rem:radicalMultiplicityOneCone}
  By \cite[Proof of Lemma 1]{BLMM17}, any maximal $C_w(I)\subseteq\Trop(I)$ of multiplicity one has a primary initial ideal $\initial_w(I)$ and a binomial radical $\sqrt{\initial_w(I)}$. And since
  \[ \Big(\initial_w(I) \text{ positive} \Longleftrightarrow \sqrt{\initial_w(I)} \text{ positive}\Big) \quad\text{and}\quad \sigma\cdot\sqrt{\initial_w(I)} = \sqrt{\initial_{\sigma\cdot w}(I)} \;\;\forall\sigma\in S, \]
  we can use Algorithm~\ref{alg:positivityBinomial} to test positivity within their orbit. 
\end{remark}

\subsection{Algorithm for general cones}
We decide positivity of general maximal Gr\"obner cones using Definition~\ref{def:positiveTropicalization}. The idea is to reduce the problem to dimension zero, for which we can explicitly compute the signs of the finite number of roots.

We begin with recalling a central lemma for the proof of the Fundamental Theorem of Tropical Geometry, which allows us to read off positivity from the zeroes of the initial ideal.

\begin{lemma}[{\cite[Proposition 3.2.11]{MS15}}]\label{lem:initialZeroes}
  Let $w=(w_1,\ldots,w_n)\in\Trop(I)$, then
	\[
		V(\initial_w(I)) \cap (\CC^*)^n \!=\!
		\{ (\overline{t^{-w_1}z_1},\ldots,\overline{t^{-w_n}z_n})\mid (z_1,\ldots,z_n)\!\in\! V(I)\cap{(\CC^*)^n} \!\text{ with } \nu(z_i)\!=\!w_i\}.
	\]
	In particular, $C_w(I)\subseteq\Trop^+(I)$ if and only if $V(\initial_w(I))\cap \RR_{> 0}^n\neq\emptyset$.
\end{lemma}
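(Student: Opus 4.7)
The strategy is to identify points of $V(\initial_w(I))\cap(\CC^\ast)^n$ with points of $V(I)\cap(K^\ast)^n$ whose valuation vector matches $w$, via the map sending each Puiseux-series coordinate to (the residue class of) its leading coefficient. Since positivity of a Puiseux series is determined by the sign of its leading coefficient, this identification will immediately yield the ``in particular'' statement once the set-equality is established.

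For the first assertion, I would invoke \cite[Proposition~3.2.11]{MS15}. The inclusion ``$\supseteq$'' is a direct calculation: given $f = \sum_\alpha c_\alpha x^\alpha \in I$ and $z\in V(I)\cap(K^\ast)^n$ with valuations matching $w$, set $M\coloneqq\max_\alpha(w\cdot\alpha - \nu(c_\alpha))$; multiplying $f(z)=0$ by $t^{-M}$ and reducing modulo the maximal ideal of $R$, the non-initial terms drop out (they acquire strictly positive valuation after the shift), and what remains is precisely $\initial_w(f)$ evaluated at $(\overline{t^{-w_i}z_i})_i$. The reverse inclusion ``$\subseteq$'' requires lifting a zero of $\initial_w(I)$ in the residue field to a zero of $I$ in $K^n$; this is a Hensel-type argument that exploits $K$ being algebraically closed with nontrivial valuation and is the essential content of the Fundamental Theorem of Tropical Algebraic Geometry. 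This is the main obstacle of the lemma, and I would cite it rather than reprove it.

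The ``in particular'' statement then follows by tracking positivity through the above bijection. For ``$\Rightarrow$'': if $C_w(I)\subseteq\Trop^+(I)$ then $w\in\Trop^+(I)$, so by density together with the fact that $w$ is a rational point of the polyhedral fan $\Trop(I)$ one finds $z\in V(I)\cap R_{>0}^n$ with valuation vector matching $w$; its image under the map of the first part lies in $V(\initial_w(I))\cap\RR_{>0}^n$, because the leading coefficients of $R_{>0}$-series are positive reals. For ``$\Leftarrow$'': any $\bar z\in V(\initial_w(I))\cap\RR_{>0}^n$ lifts, again by the first part, to $z\in V(I)\cap(K^\ast)^n$ whose coordinates each have positive real leading coefficient, i.e., $z\in V(I)\cap R_{>0}^n$, so that $w\in\Trop^+(I)$ directly from Definition~\ref{def:positiveTropicalization}. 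All that remains beyond the cited lifting result is bookkeeping of leading Puiseux coefficients.
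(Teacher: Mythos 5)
Your proof matches the paper's: the paper likewise cites \cite[Proposition 3.2.11]{MS15} for the hard inclusion ``$\subseteq$'' and verifies ``$\supseteq$'' by the same direct evaluation argument showing $f(z)=0$ forces $\initial_w(f)$ to vanish at the leading-coefficient residues. The paper does not even spell out the ``in particular'' part, so your additional bookkeeping of signs of leading coefficients (together with the fact, from Proposition~\ref{prop:SpeyerWilliams}, that positivity is a property of whole Gr\"obner cones) is a welcome supplement rather than a deviation.
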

\begin{proof}
  \begin{description}[leftmargin=*]
  \item[$\subseteq$] This is \cite[Proposition 3.2.11]{MS15}.
  \item[$\supseteq$]
	  Let $z\coloneqq (z_1,\ldots,z_n)\in V(I)$ with $\nu(z)=w$. Then for any $f\in I$ we have $f(z)=0$ by definition, which necessarily implies $\initial_w(f)(\overline{t^{-w_1}z_1},\ldots,\overline{t^{-w_n}z_n})=0$. Hence $z\in V(\initial_w(I))$. \qedhere
  \end{description}
\end{proof}

The next lemma allows us to reduce the problem to dimension zero.

\begin{lemma}\label{lem:positivityHomogeneity}
  Let $J\unlhd \RR[x]$ be weighted homogeneous with respect to a weight vector $0\neq w=(w_1,\ldots,w_n)\in\ZZ^n$, say $w_i\neq 0$. Then
  \[ V(J)\cap(\RR_{>0})^n\neq\emptyset\quad\Longleftrightarrow\quad V(J + \langle x_i-1\rangle)\cap (\RR_{>0})^n\neq\emptyset \]
  and moreover $\dim(J + \langle x_i-1\rangle)=\dim(J)-1$.
\end{lemma}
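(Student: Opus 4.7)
The plan is to handle the equivalence and the dimension identity separately.

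For the equivalence, the ``$\Leftarrow$'' direction is immediate from $V(J+\langle x_i-1\rangle) \subseteq V(J)$. For ``$\Rightarrow$'', I would exploit the $\RR_{>0}$-scaling that comes with weighted homogeneity: given $z = (z_1, \ldots, z_n) \in V(J) \cap (\RR_{>0})^n$, set $t := z_i^{-1/w_i} \in \RR_{>0}$ (well-defined since $z_i > 0$ and $w_i \neq 0$) and $z' := (t^{w_1} z_1, \ldots, t^{w_n} z_n) \in (\RR_{>0})^n$. Then $z'_i = 1$, and for any $w$-homogeneous generator $f \in J$ of degree $d$ one has $f(z') = t^d f(z) = 0$, so $z' \in V(J + \langle x_i - 1\rangle) \cap (\RR_{>0})^n$.

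For the dimension identity, my plan is to pass to $\CC$ (Krull dimension is preserved under extension of scalars) and to exploit the $\CC^*$-action on $\CC^n$ given by $t\cdot y = (t^{w_1}y_1, \ldots, t^{w_n}y_n)$, under which $V(J)$ is invariant because $J$ is $w$-homogeneous. Writing $U := V(J) \cap \{x_i \neq 0\}$ and $H := V(J + \langle x_i - 1\rangle)$, I would study the action morphism $\mu \colon \CC^* \times H \to U$, $(t,z) \mapsto t \cdot z$. Each $y \in U$ has exactly $|w_i|$ preimages, one for each $t \in \CC^*$ with $t^{w_i} = y_i$; so $\mu$ is finite and surjective, and the standard dimension count for finite surjective morphisms yields $\dim H + 1 = \dim U$.

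The main obstacle is the remaining identity $\dim U = \dim V(J)$, which can fail when some irreducible component of $V(J)$ sits inside the hyperplane $\{x_i = 0\}$. To handle this, I would replace $J$ by its saturation $J : x_i^\infty$: this preserves $w$-homogeneity, leaves both $V(J) \cap (\RR_{>0})^n$ and $V(J + \langle x_i - 1\rangle)$ unchanged (since both lie in $\{x_i \neq 0\}$), and removes precisely the components of $V(J)$ contained in $\{x_i = 0\}$. After this reduction $U$ is dense in $V(J)$, yielding $\dim U = \dim V(J) = \dim J$; combined with the $\CC^*$-fibration above this gives $\dim(J + \langle x_i - 1\rangle) = \dim J - 1$.
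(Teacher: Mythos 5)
Your proof of the equivalence is essentially the paper's: the ``$\Leftarrow$'' direction is containment, and for ``$\Rightarrow$'' the paper likewise invokes the $\CC^\ast$-action induced by weighted homogeneity and rescales a positive point $z$ so that its $i$-th coordinate becomes $1$ (your explicit choice $t=z_i^{-1/w_i}\in\RR_{>0}$ is exactly the real point of that torus orbit). Where you genuinely diverge is the dimension identity: the paper's proof simply does not address $\dim(J+\langle x_i-1\rangle)=\dim(J)-1$ at all, whereas you supply a complete argument via the quasi-finite surjection $\CC^\ast\times V(J+\langle x_i-1\rangle)\to V(J)\cap\{x_i\neq 0\}$. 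That is a real addition, and your fibration argument is sound.

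One caveat, which is really a defect of the lemma as stated rather than of your argument: the identity $\dim(J+\langle x_i-1\rangle)=\dim(J)-1$ can fail when a top-dimensional component of $V(J)$ lies in $\{x_i=0\}$ (e.g.\ $J=\langle x_1\rangle\unlhd\RR[x_1,x_2]$, $w=(1,1)$, $i=1$ gives the unit ideal on the right). You correctly identified this and pass to $J:x_i^\infty$, but note that what your argument then establishes is $\dim(J+\langle x_i-1\rangle)=\dim(J:x_i^\infty)-1$, which equals $\dim(J)-1$ only under the implicit hypothesis that no maximal-dimensional component of $V(J)$ sits in the coordinate hyperplane. That hypothesis is what actually holds in the paper's application (where $J$ is a monomial-free initial ideal cut down along its homogeneity space and only the torus part of $V(J)$ matters), so your version is the usable one; you should just state the hypothesis, or the saturated form of the conclusion, explicitly rather than letting the substitution $J\rightsquigarrow J:x_i^\infty$ silently change the meaning of $\dim(J)$ in the final line.
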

\begin{proof}
  \begin{description}[leftmargin=*]
  \item[$\Leftarrow$] Clear, as $V(J+\langle x_i-1\rangle)\subseteq V(J)$.
  \item[$\Rightarrow$] Note that the weighted homogeneity of $J$ induces a torus action
  \[ \CC^\ast \times V(J)\longrightarrow V(J), \quad (a,(z_1,\ldots,z_n)) \longmapsto (a^{-w_1}z_1,\ldots,a^{-w_n} z_n),  \]
  with $w_i\neq 0$. Hence for any $z\in V(J)\cap(\RR_{>0})^n$ there exists an $a\in\CC^\ast$ with $a^{-w_i}\cdot z_i=1$. \qedhere
  \end{description}
\end{proof}

\goodbreak

\begin{algorithm}[Positivity reduced to dimension $0$]\label{alg:positivityReduction}\
  \begin{algorithmic}[1]
    \REQUIRE{$G\subseteq\initial_w(I)$, a reduced Gr\"obner basis for a maximal cone $C_w(I)\subseteq\Trop(I)$.}
    \ENSURE{$H$, generators of a zero-dimensional ideal $J\unlhd \CC[x_1,\ldots,x_n]$ such that
	  \[ C_w(I)\subseteq\Trop^+(I) \quad\Longleftrightarrow\quad  J\cap(\RR_{>0})^n\neq\emptyset.\]}\vspace{-1em}
    \STATE Compute a basis $b_1,\dots,b_d\in\RR^n$ of the $d$-dimensional vector subspace
    \[ C_0(\initial_w(I)) = \{ v\in\RR^n \mid \initial_v(g)=g \text{ for all } g\in G\}\subseteq \RR^n \]
    such that the matrix $B\in\RR^{d\times n}$ with rows $b_1,\dots,b_d$ is in row-echelon form.
    \STATE Let $\Lambda\subseteq \{1,\dots,n\}$ denote the column-indices of the pivots of $B$.
	  \RETURN{$H\coloneqq G\cup\SmallSetOf{x_i-1}{i\in \Lambda}$.}
  \end{algorithmic}
\end{algorithm}
\begin{proof}[Proof of correctness]
  By Lemma~\ref{lem:initialZeroes}, we have $C_w(I)\subseteq\Trop^{+}(I)$ if and only if $V(\initial_w(I))\cap(\RR_{>0})^n\neq\emptyset$.

	By \cite[Proposition 2.4]{BJSST07}, $C_0(\initial_w(I))$ is the set of all vectors with respect to whom $\initial_w(I)$ is weighted homogeneous. We can thus apply Lemma~\ref{lem:positivityHomogeneity} iteratively $d$ times to obtain $V(\initial_w(I))\cap(\RR_{>0})^n\neq\emptyset$ if and only if $V(J)\cap(\RR_{>0})^n\neq\emptyset$.
\end{proof}

Additionally, we require an algorithm for computing the signs of the roots of a zero-dimensional ideal. We will treat this part as a black box, and discuss various possibilities in Remark~\ref{rem:positivitySigns}.

\begin{algorithm}[Black box algorithm for determining sign]\label{alg:positivitySigns}\
  \begin{algorithmic}[1]
    \REQUIRE{$H\subseteq J$, a generating set of a zero-dimensional ideal $J\unlhd \CC[x]$.}
    \ENSURE{$R\subseteq \{\pm 1\}^n$, such that
      \[ R=
        \begin{cases}
          \sgn(V(H)) &\text{if }V(H)\subseteq(\RR^\ast)^n,\\
          \emptyset &\text{otherwise}
        \end{cases} \]
      where $\sgn(\cdot)$ denotes the map that is componentwise
    \[ \RR^\ast \longrightarrow \{\pm 1\}, \quad z \longmapsto
      \begin{cases}
        +1 &\text{if }z>0\\
        -1 &\text{if }z<0
      \end{cases}\]}
  \end{algorithmic}
\end{algorithm}

Combining Algorithms~\ref{alg:positivityReduction} and \ref{alg:positivitySigns}, we obtain Algorithm~\ref{alg:positivityGeneral} for positivity within orbits of maximal cones.

\begin{algorithm}[Positivity of maximal-dimensional cones]\label{alg:positivityGeneral}\
  \begin{algorithmic}[1]
    \REQUIRE{$(G,S)$, where
      \begin{itemize}[leftmargin=*]
      \item $G\subseteq\initial_w(I)$, a reduced Gr\"obner basis of a maximal cone $C_w(I)\subseteq\Trop(I)$,
      \item $S$, a group as in Convention~\ref{con:positivity}.
      \end{itemize}}
    \ENSURE{$S^{+}_w(I)\coloneqq \{\sigma\in S\mid \sigma\cdot C_w(I)\subseteq\Trop^{+}(I)\}$, a set of symmetries which map $C_w(I)$ onto $\Trop^{+}(I)$.}
    \STATE Apply Algorithm~\ref{alg:positivityReduction}:
    \[ H\coloneqq \text{positivityReduction}(G)\subseteq K[x]. \]\vspace{-1em}
    \STATE Apply Algorithm~\ref{alg:positivitySigns}
    \[ R\coloneqq \text{positivitySigns}(H)\subseteq\{\pm 1\}^n. \]\vspace{-1em}
    \STATE Construct
    \[ P\coloneqq \bigcap_{r\in R} \{ \sigma\in S\mid \sigma\cdot r\geq 0\}, \]
    where $S$ acts on $\{\pm 1\}^n$ as it acts on $K^n$.
    \RETURN{$P$}
  \end{algorithmic}
\end{algorithm}

\begin{remark}\label{rem:positivitySigns}
  Computing the signs of a finite set of points $V(J)\subseteq \CC^n$ for a zero-dimensional ideal $J\unlhd\CC[x]$ as in Algorithm~\ref{alg:positivitySigns} can be done symbolically, numerically or with a mix of both.

  One conceptually straightforward option is to approximate $V(J)$ using numerical algebraic geometry. Once a point in $V(J)$ is known with sufficient precision, there are algorithms for certifying reality \cite{HS12} and its sign can simply be read off.

  Alternatively, one can symbolically compute a triangular decomposition of $J$ into factors of the form
  \[ \langle p_1(x_1),x_2^{d_2}-p_2(x_1),\dots,x_n^{d_n}-p_n(x_1)\rangle, \qquad p_i \text{ univariate polynomials}, \]
  from which one can proceed using numerical algorithms for the univariate case.
\end{remark}

\section{The maximal-dimensional cones of $\TropGrass^+(3,8)$}\label{sec:positiveGrassmannian}
\noindent
In this section, we verify \cite[Conjecture 8.1]{SpeyerWilliams05} for the Grassmannian $\Grass_0(3,8)$, which relates the combinatorial structure of the positive tropicalization with the combinatorial structure of a cluster algebra. This serves as a test for the correctness of our computations, as the conjecture has been proven for $\Grass_0(3,8)$ by Brodsky and Stump \cite[Remark 2.23]{BS18}.

Cluster algebras are algebras with a remarkable hidden combinatorial structure. First introduced in \cite{FZ02} by Fomin and Zelevinsky, cluster algebras are subrings of rational function fields $K(x_1,\ldots,x_n)$ generated by a union of overlapping algebraically independent $n$-subsets. These so-called \emph{clusters} are connected through mutations, rules which transform one cluster to another, and together they form a simplicial complex called the \emph{cluster complex}. In \cite{FZ03}, Fomin and Zelevinsky completely classify all cluster algebras of \emph{finite type}, i.e., cluster algebras with finite cluster complexes. Similar to the Cartan-Killing classification of semisimple Lie algebras, their classification associates any finite type cluster algebra a Dynkin graph.
One prominent family of Cluster algebras are Grassmannians $\Grass_0(k,n)$, initially shown by Fomin and Zelevinksy for $k=2$, later fully proven by Scott \cite{Scott06}. 

The conjecture of Speyer and Williams is based on observations on the Grassmannians $\Grass_0(2,n)$, $\Grass_0(3,6)$, and $\Grass_0(3,7)$. By \cite{Scott06}, this makes $\Grass_0(3,8)$ the only remaining Grassmannian whose cluster algebra is of finite type, i.e., whose cluster complex is finite.

\begin{conjecture}[{\cite[Conjecture 8.1]{SpeyerWilliams05}}]\label{conj:SpeyerWilliams}
  Let $\mathcal A$ be a cluster algebra of finite type and $\mathcal S(\mathcal A)$ its associated cluster complex. If the lineality space of $\Trop^+\Spec\mathcal A$ has dimension $|C|$, then $\Trop^+\Spec\mathcal A$ is abstractly isomorphic to the cone over $\mathcal S(\mathcal A)$. If the condition on the lineality space does not hold, the resulting fan is a coarsening of the cone over $\mathcal S(\mathcal A)$.
\end{conjecture}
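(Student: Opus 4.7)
The plan is to exploit the Fomin--Zelevinsky classification of cluster algebras of finite type in \cite{FZ03}, which associates to each such $\mathcal A$ a finite Dynkin diagram of type $A$, $B$, $C$, $D$, $E$, $F$ or $G$, and to use the theory of $g$-vectors from Fomin--Zelevinsky's ``Cluster Algebras IV'' as the bridge between the positive tropicalization and the cluster complex. The strategy is not a case-by-case inspection but a uniform argument whose only type-dependent input is the completeness of the $g$-vector fan, itself a consequence of the classification.

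The starting point is the Laurent phenomenon combined with positivity: fixing an initial cluster $(x_1,\ldots,x_n)$, every cluster variable $x'$ of $\mathcal A$ is a Laurent polynomial $L_{x'}(x_1,\ldots,x_n)$ with nonnegative integer coefficients. Positive points of $\Spec\mathcal A$ are therefore parametrized by $\RR_{>0}^n$ via $(a_1,\ldots,a_n)\mapsto(L_{x'}(a_1,\ldots,a_n))_{x'}$, so that $\Trop^+\Spec\mathcal A$ is the image of $\RR^n$ under the piecewise-linear map $w\mapsto(\text{trop}(L_{x'})(w))_{x'}$, where $\text{trop}(L_{x'})$ denotes the tropicalization of a Laurent polynomial with positive coefficients. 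This reduction to a piecewise-linear image of $\RR^n$ is the main conceptual point: no non-archimedean geometry beyond Proposition~\ref{prop:SpeyerWilliams} is needed, because positivity eliminates cancellation.

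The central step is to identify this image with (a coarsening of) the cone over $\mathcal S(\mathcal A)$. Each cluster variable $x'$ carries a well-defined $g$-vector $g_{x'}\in\ZZ^n$, and the $g$-vectors of cluster variables lying in a common cluster span a simplicial cone; in finite type these cones assemble into a complete simplicial fan, the $g$-vector fan, whose face poset is $\mathcal S(\mathcal A)$. I would then show that on the interior of each such $g$-vector cone the maximum tropical monomial of $\text{trop}(L_{x'})$ is the one coming from the principal term, and evaluates to the linear functional $w\mapsto\langle g_{x'},w\rangle$. Combined with mutation covariance of $g$-vectors, this identifies the image of the tropicalized parametrization with the $g$-vector fan, yielding the claimed abstract isomorphism when the lineality hypothesis holds.

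The main obstacle will be establishing completeness: that the tropicalized parametrization covers the full $g$-vector fan and does not collapse any maximal cones. In finite type this draws on three cluster-algebraic inputs: (i) cluster monomials form a $\ZZ$-basis of $\mathcal A$, so no extra relations shrink the image; (ii) the $g$-vector fan is a complete simplicial fan in finite type; and (iii) every positive ray of $\Trop^+\Spec\mathcal A$ is witnessed by a unique cluster variable whose tropical support contains it. The lineality-space hypothesis enters precisely when passing to the quotient by the kernel of the tropical parametrization: if that kernel has dimension $|C|$, distinct $g$-vector rays remain distinct and the abstract isomorphism holds on the nose; otherwise rays whose difference lies in the ``missing'' lineality direction collapse, producing exactly the coarsening of the cone over $\mathcal S(\mathcal A)$ asserted by the conjecture.
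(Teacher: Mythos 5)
You should first note that the paper does not prove this statement at all: it is quoted verbatim as \cite[Conjecture 8.1]{SpeyerWilliams05}, the general proof (for type $A$ and all types of rank at most $8$) is attributed to Brodsky and Stump \cite{BS18}, and the paper's own contribution is a purely computational \emph{verification} in the single case $\mathcal A=\Grass(3,8)$: the authors compute all maximal cones of $\TropGrass^+(3,8)$ with the algorithms of Section~\ref{sec:positiveTropicalization}, refine by the Gr\"obner fans of the $120$ cluster variables, and check abstract isomorphism with the type-$E_8$ cluster complex using \textsc{Nauty} via \textsc{Polymake}. So there is no ``paper proof'' to match your argument against; you are attempting something strictly stronger than what the paper does.

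As a proof sketch of the general finite-type statement, your proposal has a genuine gap exactly where the content of the conjecture sits. The reduction of $\Trop^+\Spec\mathcal A$ to the image of $\RR^n$ under the piecewise-linear map $w\mapsto(\operatorname{trop}(L_{x'})(w))_{x'}$ is fine (this is how Speyer and Williams themselves proceed, using positivity of the Laurent expansions to rule out cancellation), and completeness and simpliciality of the $g$-vector fan in finite type are citable. But the central step --- ``I would then show that on the interior of each $g$-vector cone the maximum tropical monomial of $\operatorname{trop}(L_{x'})$ is the one coming from the principal term, and evaluates to $\langle g_{x'},w\rangle$'' --- is asserted, not argued, and it is essentially a restatement of the conjecture. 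The $g$-vector records the leading exponent of $L_{x'}$ with respect to a dominance order coming from principal coefficients; that this exponent is the one achieving the \emph{tropical maximum} precisely on the corresponding cluster cone requires sign-coherence and detailed control of the $F$-polynomials (Fock--Goncharov-type duality), and proving it uniformly is the hard part that \cite{BS18} resolve by entirely different means (subword complexes and sortable elements), and that Speyer--Williams could only check case by case. Equivalently, you need that the common refinement of the normal fans of the Newton polytopes of all the $L_{x'}$ \emph{is} the cluster fan and that the map does not identify distinct maximal cones; items (i)--(iii) in your list do not supply this. Your treatment of the lineality-space hypothesis is likewise too vague to distinguish the ``isomorphism'' case from the ``coarsening'' case. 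If your goal is only what the paper actually establishes, the honest route is the computational one: apply Algorithms~\ref{alg:positivityBinomial} and~\ref{alg:positivityGeneral} orbitwise, use Corollary~\ref{cor:positivityAndSaturation} together with Theorem~\ref{thm:TropGrass38Saturation} to reduce to Dressian cones, and compare incidence data with the $E_8$ cluster complex.
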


The conjecture was proven by Brodsky and Stump \cite{BS18} for cluster algebras of type $A$ and of all types of to rank at most $8$, which includes $\Grass_0(3,8)$.

\subsection{Computing the cluster complex $\mathcal S(\Grass_0(3,8))$}
Thanks to an implementation by Stump, \textsc{SAGE} \cite{sagemath} features functions for computing and analyzing cluster complexes. The algorithm is based on a work of Ceballos, Labb\'e, and Stump \cite{CLS14}, and requires the root system of the cluster algebra. The root system for $\Grass_0(3,8)$ is the exceptional group $E_8$ \cite[Theorem 5]{Scott06}:

\begin{lstlisting}[language=Python,
  basicstyle={\ttfamily}]
  C = ClusterComplex(['E', 8]);
\end{lstlisting}

\textsc{Sage} returns an object of type \texttt{cluster complex}, whose maximal cells can be seen via

\begin{lstlisting}[language=Python,
  basicstyle={\ttfamily}]
  C.facets();
\end{lstlisting}

\subsection{Computing the positive tropicalization $\TropGrass^+(3,8)$}\label{sec:TropGrassPlus38}
Using the algorithms in Section~\ref{sec:positiveTropicalization} on the computational results in Section~\ref{sec:grass38}, we obtain:

\begin{proposition}
	There is a Dressian subfan supported on the positive tropical Grassmannian $\TropGrass^+(3,8)$. It is a pure $16$-dimensional subfan of the Dressian $\Dress(3,8)$ in $\RR^{56}$ with an $8$-dimensional lineality space and $f$-vector
	$(120$, $2072$, $14\,088$, $48\,544$, $93\,104$, $100\,852$, $57\,768$, $13\,612)$.
\end{proposition}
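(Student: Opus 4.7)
The plan is to first establish that $\TropGrass^+(3,8)$ is supported on a subfan of the Dressian, and then to compute this subfan explicitly orbit by orbit. The key observation is that positivity is constant on the Dressian cones contained in $\TropGrass_0(3,8)$: by Theorem~\ref{thm:TropGrass38Saturation}, any two weight vectors $w,v$ lying in the relative interior of the same Dressian cone satisfy $\initial_w(\mathcal I_{3,8}):p^\infty = \initial_v(\mathcal I_{3,8}):p^\infty$; by Corollary~\ref{cor:positivityAndSaturation}, this identity forces $w\in\Trop^+(\mathcal I_{3,8})$ if and only if $v\in\Trop^+(\mathcal I_{3,8})$. Consequently positivity is a well-defined attribute of each maximal Dressian cone lying on $\TropGrass_0(3,8)$, and $\TropGrass^+(3,8)$ is exactly the union of those declared positive. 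The resulting fan is automatically pure of dimension $16$ and inherits the $8$-dimensional lineality space from $\Dress(3,8)$.

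Next, I would enumerate the $4\,766$ $S_8$-orbits of maximal Dressian cones supported on $\TropGrass_0(3,8)$ that were produced in Theorem~\ref{thm:TropGrass38GroebnerStructure}. For each orbit representative $C_w(\mathcal I_{3,8})$ I would apply Algorithm~\ref{alg:positivityGeneral}: it returns the set $S^+_w\subseteq S_8$ of permutations $\sigma$ with $\sigma\cdot C_w\subseteq \Trop^+(\mathcal I_{3,8})$, and thus exactly $|S^+_w|/|\Stab(w)|$ positive maximal cones in the orbit. Summing across all orbits yields the claimed count of $13\,612$ maximal cones. To obtain the remaining entries of the $f$-vector, I would traverse the face lattice of each positive maximal Dressian cone, group faces into $S_8$-orbits, apply Algorithm~\ref{alg:positivityGeneral} (or Algorithm~\ref{alg:positivityBinomial} when the initial ideal is binomial) on orbit representatives, and tabulate positive faces by dimension while correcting for stabilizer sizes to avoid double-counting shared faces between distinct maximal cones.

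The principal obstacle is the positivity test itself on non-binomial initial ideals. For binomial cones, Algorithm~\ref{alg:positivityBinomial} is a swift combinatorial check on the signs of Gr\"obner basis coefficients, and by Remark~\ref{rem:radicalMultiplicityOneCone} it also handles cones of multiplicity one through passage to the radical. For the remaining cones one invokes Algorithm~\ref{alg:positivityGeneral}: Algorithm~\ref{alg:positivityReduction} cuts the problem down to a zero-dimensional ideal, and the sign classification of the real roots must then be carried out via triangular decomposition combined with certified numerical evaluation as outlined in Remark~\ref{rem:positivitySigns}. Given the sheer number of Dressian orbits on $\TropGrass_0(3,8)$ and the cost of Gr\"obner basis computations in the $56$-variable Pl\"ucker ring, this step is only tractable in parallel, and I would run it inside the \Singular/\GPISpace framework of Section~\ref{sec:parallelFramework}.

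A secondary subtlety is that $\Trop^+(\mathcal I_{3,8})$ carries no $S_8$-symmetry of its own, so orbit representatives must be handled with care: every positive face in the $f$-vector count has to be attributed to a single canonical representative across all orbits of all maximal cones containing it. I would resolve this by storing each face by its canonical (lexicographically minimal) ray set and deduplicating globally before the final enumeration, which yields the stated $f$-vector $(120,2072,14088,48544,93104,100852,57768,13612)$.
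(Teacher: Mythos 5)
Your proposal follows essentially the same route as the paper: reduce positivity to a property of the maximal Dressian cones via Theorem~\ref{thm:TropGrass38Saturation} and Corollary~\ref{cor:positivityAndSaturation}, then run the positivity algorithms of Section~\ref{sec:positiveTropicalization} over the $4\,766$ $S_8$-orbits from Theorem~\ref{thm:TropGrass38GroebnerStructure} (the paper notes that all but one of these orbits has a binomial saturated initial ideal, so Algorithm~\ref{alg:positivityBinomial} suffices except for a single orbit handled by Algorithm~\ref{alg:positivityGeneral}). The only caveat is that purity of the resulting subfan is not ``automatic'' a priori but is itself part of the computational verification; otherwise your argument matches the paper's.
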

\begin{proof}
  By Corollary~\ref{cor:positivityAndSaturation}, positivity only depends on the saturated initial ideals\footnote{Instead of Corollary~\ref{cor:positivityAndSaturation}, we could also rely on the recent results of \cite{ALS20} and \cite{SW20} that the positive tropical Grassmannian $\TropGrass_0^+(k,n)$ equals the positive Dressian $\Dress^+(k,n)$, which implies that the Pl\"ucker structure on $\TropGrass_0^+(k,n)$ is a coarsening of the Gr\"obner structure.}, and, by Theorem~\ref{thm:TropGrass38Saturation}, the saturated initial ideals of $\TropGrass_0(3,8)$ only depend on the Dressian cones. It therefore suffices to check the $4\,766$ $S_8$-orbits of Dressian cones in Theorem~\ref{thm:TropGrass38GroebnerStructure} instead of the $14\,763$ $S_8$-orbits of Gr\"obner cones.

  Of the $4\,766$ saturated initial ideals of $\Grass_0(3,8)$ all but one are binomial and thus admissible for Algorithm~\ref{alg:positivityBinomial}. The unique non-binomial saturated initial ideal arises from the Dressian orbit containing $-e_{015}-e_{024}-e_{067}-e_{126}-e_{137}-e_{235}-e_{346}-e_{457}$, and it has no positive cone in its orbit by Algorithm~\ref{alg:positivityGeneral}. To be more specific, it is not hard to see that the resulting ideal of Algorithm~\ref{alg:positivityReduction} has no real solution at all, for example by eliminating all but one variable.
\end{proof}

Note that \cite{SpeyerWilliams05} considers positive tropicalizations with the coarsest structure refined by the individual Gr\"obner fans of all cluster variables. For the cluster variables of $\Grass_0(3,8)$, recall the following result from \cite{Scott06}:

\begin{theorem}{\cite[Theorem 8]{Scott06}}
  The cluster algebra of $\Grass_0(3,8)$ possesses $128$ cluster variables:
  \begin{description}[leftmargin=*]
  \item[48] Pl\"ucker variables $p_{ijk}$ where $\{i,j,k\}\neq\{i,i+1,i+2\}$ mod $8$.
  \item[56] quadratic Laurent binomials with positive coefficients, inherited from $\Grass_0(3,6)$, describing six points in a special position:
    \[ Y^{123456} = (p_{346})^{-1}\cdot\Big(p_{146}p_{236}p_{345}+ p_{136}p_{234}p_{456}\Big) \quad\text{and}\quad X^{123456} = Y^{612345}\]
    and their $D_8$-translates.
  \item[24] cubic Laurent trinomials with positive coefficients describing eight points in a special position:
    \begin{align*}
      A &= (p_{578})^{-1}\cdot\Big(p_{178}p_{567}\cdot X^{123458}+ p_{158}p_{678}\cdot X^{123457}\Big) \quad \text{and}\\
      B &= (p_{158})^{-1}\cdot\Big(p_{128}p_{567}\cdot X^{123458}+ p_{258}\cdot A \Big)
    \end{align*}
    and their $D_8$-translates.
  \end{description}
\end{theorem}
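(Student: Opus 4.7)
The plan is to deduce the count of 128 cluster variables from the type classification and then exhibit the three families explicitly using the $D_8$-symmetry acting on the cluster algebra. By Scott's Theorem 5, the cluster algebra on $\Grass_0(3,8)$ is of finite type $E_8$, and by the classification of Fomin--Zelevinsky the cluster variables of a finite type cluster algebra are in bijection with the almost positive roots of the associated root system. For $E_8$ this gives $120 + 8 = 128$ cluster variables, which matches $48 + 56 + 24$. So the real task is not the count, but to identify the three orbits of variables and show they exhaust the list.

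First I would fix Scott's initial seed, which consists of the $8$ frozen Plücker variables $p_{i,i+1,i+2} \pmod 8$ together with a specific choice of $40$ mutable Plücker variables, arranged according to an $E_8$-quiver living on a triangulated $8$-gon. The $D_8$-action rotating and reflecting the index set $\{1,\ldots,8\}$ extends to an action on the cluster algebra by Scott's construction (it permutes the seeds and preserves the exchange relations up to relabeling). I would use this symmetry throughout to reduce the number of mutation sequences to analyze.

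Second, I would classify the cluster variables by $D_8$-orbits of seeds at which they first appear. The 48 non-frozen Plücker variables $p_{ijk}$ with $\{i,j,k\} \neq \{i,i+1,i+2\}$ already sit in the initial seed up to $D_8$-action, giving the first family. For the second family, each $6$-element subset $\{i_1,\ldots,i_6\} \subset [8]$ determines an embedded sub-cluster algebra of type $A_3$ on $\Grass_0(3,6)$; this sub-algebra contains, beyond Plücker coordinates, the two quadratic Laurent binomials $X^{i_1\cdots i_6}$ and $Y^{i_1\cdots i_6}$ shown in the statement (they are exactly the two non-Plücker cluster variables of $\Grass_0(3,6)$ described in \cite{SpeyerSturmfels:2004}). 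With $\binom{8}{6} = 28$ subsets and $2$ variables each, this gives the $56$ quadratic Laurent binomials. The cubic trinomials $A$ and $B$ are obtained by performing two specific mutations away from a Plücker cluster containing all eight points in general position; their $D_8$-orbits have size $12$ each, yielding the remaining $24$ variables.

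The hard part will be verifying completeness, i.e.\ that these $48 + 56 + 24 = 128$ expressions exhaust all cluster variables, and that the listed Laurent expressions are indeed cluster variables (not just products or incidental Laurent identities). Completeness follows from the $E_8$ count, but one still needs to ensure that the three families are pairwise disjoint and that every proposed expression genuinely occurs in some seed. For disjointness, one may compare the Newton polytopes (degrees) of the Laurent expressions: Plücker variables are monomials of degree $1$, the $X^{\bullet}, Y^{\bullet}$ are binomials whose numerators are degree $3$ monomials in Plückers, and the $A, B$ are trinomials of degree $4$ in the numerator. For the membership of $A$ and $B$ in the cluster structure, one carries out the explicit mutation sequence from a Plücker seed suggested by their defining formulas (mutating at $p_{458}$ and then at $p_{158}$ respectively), and checks that the exchange relations of $E_8$-type produce exactly the given expressions. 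The bookkeeping is lengthy but finite, and $D_8$-symmetry reduces it to verifying a single representative per orbit.
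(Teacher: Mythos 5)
The paper itself offers no proof of this statement: it is imported verbatim from Scott \cite[Theorem 8]{Scott06}, so there is no in-paper argument to compare against. Your outline does essentially follow Scott's own strategy --- use the type-$E_8$ classification to get the count $120+8=128$ of almost positive roots, then exhibit $128$ distinct cluster variables in the three listed families via symmetry and explicit mutation --- so the method is the right one.

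However, two concrete pieces of your bookkeeping fail as written. First, $\Grass_0(3,6)$ has cluster type $D_4$, not $A_3$: type $A_3$ has only $6+3=9$ almost positive roots, which cannot even accommodate the $14$ mutable Pl\"ucker coordinates of $\Grass_0(3,6)$, whereas $D_4$ gives $12+4=16$ and hence exactly the two non-Pl\"ucker variables per $6$-subset on which your count $\binom{8}{6}\cdot 2=56$ depends. Second, the claim that the $D_8$-orbits of $A$ and $B$ ``have size $12$ each'' is impossible: $D_8$ here is the dihedral group of the octagon, of order $16$, and orbit sizes must divide the group order, so they can only be $1,2,4,8$ or $16$. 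Since $A$ and $B$ are two representatives whose (necessarily disjoint or equal) orbits must cover all $24$ cubics, the only consistent decomposition is $24=8+16$. A smaller slip: the exchange producing $A$ takes place at $p_{578}$ (the denominator of its defining formula), not at $p_{458}$, and the exchange relation for $B$ already involves $X^{123458}$ and $A$, so $B$ is not reached in two mutations directly from a Pl\"ucker seed. None of this invalidates the overall approach, but the orbit and mutation bookkeeping --- which you correctly identify as the substance of the proof --- has to be redone.
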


Since the Gr\"obner fan of the Pl\"ucker variables consist of a single cone that is the whole space, refining with them does not change anything. Hence, it only remains the $80$ polynomials $X$, $Y$, $A$ and $B$.

\begin{theorem}
	The positive tropical Grassmannian $\TropGrass^+(3,8)$ endowed with the Pl\"ucker structure and refined by the Gr\"obner fans of all 120 cluster variables of $\Grass(3,8)$ is a $16$-dimensional pure simplicial fan in $\RR^{56}$ with a $8$-dimensional lineality space and $f$-vector $(128$, $2\,408$, $17\,936$, $67\,488$, $140\,448$, $163\,856$, $100\,320$, $25\,080)$.
  As an abstract simplicial complex, it is isomorphic to the cluster complex $\mathcal S(\Grass(3,8))$.
\end{theorem}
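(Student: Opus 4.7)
The plan is to take the positive Dressian subfan on $\TropGrass^+(3,8)$ from the preceding proposition as the starting point and then explicitly compute the refinement induced by the Gr\"obner fans of the $120$ cluster variables. Since the $48$ Pl\"ucker cluster variables are monomials in the Pl\"ucker coordinates, their Gr\"obner fans are trivial (the full weight space) and do not refine anything. Only the $80 = 56 + 24$ Laurent polynomial cluster variables $X^{\bullet}$, $Y^{\bullet}$, $A^{\bullet}$, $B^{\bullet}$ and their $D_8$-translates can contribute to the refinement, through the tropical hypersurfaces of their (cleared-denominator) numerators.

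First I would tabulate these $80$ cluster variables explicitly from the formulas in \cite{Scott06} and, for each of the $13\,612$ maximal cones of the positive Dressian subfan from the previous proposition, determine which cluster-variable hypersurfaces cut through its relative interior. Concretely, a cleared polynomial $\sum c_\alpha p^\alpha$ refines a cone $\sigma$ precisely when the linear functional $w \mapsto \max_\alpha w \cdot \alpha$ attains its maximum at two or more exponents on $\mathrm{Relint}(\sigma)$, which is a straightforward linear-programming test against the Newton polytope. Intersecting these hyperplane arrangements with each cone inside the positive Dressian subfan and recording the resulting subdivision produces a candidate simplicial fan. I would exploit the $D_8$-symmetry that preserves positivity (note the caveat from Section~\ref{sec:positiveTropicalization} that $\Trop^+$ is only $D_8$-symmetric, not $S_8$-symmetric) to reduce the work to orbit representatives of the positive cones.

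Once the refined fan is built, I would verify that its $f$-vector matches $(128, 2\,408, 17\,936, 67\,488, 140\,448, 163\,856, 100\,320, 25\,080)$, in particular that its $128$ rays are in natural bijection with the $128$ cluster variables (each refined ray lying on the unique tropical hypersurface cut out by a single cluster variable). Then I would construct the abstract simplicial complex whose facets are the $8$-element subsets of $\{1,\dots,128\}$ indexing the refined maximal cones, and compare it to the output of \texttt{ClusterComplex(['E', 8]).facets()} from \textsc{SAGE}. By the classification \cite{Scott06} the cluster algebra of $\Grass_0(3,8)$ is of type $E_8$, so any bijection of vertices that preserves the facet set yields the desired abstract isomorphism.

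The main obstacle will be two-fold: first, carrying out the refinement efficiently at this scale (hundreds of millions of incidence checks across orbit representatives) without losing the simplicial structure; second, upgrading the numerical $f$-vector match to a genuine combinatorial isomorphism. The latter requires producing an \emph{explicit} bijection between refined rays and cluster variables, and verifying that facets go to clusters under this bijection. A clean way is to label each ray by the unique cluster variable vanishing tropically on it and then appeal to \textsc{SAGE} to check that the induced map of simplicial complexes is an isomorphism, leveraging \cite[Remark~2.23]{BS18} as external corroboration that the target cluster complex is correctly identified.
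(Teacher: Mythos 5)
Your proposal takes essentially the same approach as the paper: both compute the refinement by intersecting the maximal positive Dressian cones with the Gr\"obner fans of the non-monomial cluster variables, and then verify the claimed $f$-vector and the abstract isomorphism with the type-$E_8$ cluster complex computationally. The only notable difference is in the final step: the paper simply feeds the two incidence matrices (from \textsc{Polymake} and \textsc{SAGE}) to \textsc{Nauty} for an abstract isomorphism test, whereas you propose the stronger route of exhibiting an explicit ray-to-cluster-variable bijection -- which is nice if it works, but is not needed for the statement as formulated.
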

\begin{proof}
  The refinement was straightforwardly computed by intersecting all maximal Dressian cones on $\TropGrass^+(3,8)$ with the maximal cones of the Gr\"obner fans of the cluster variables.

  The isomorphism of the two simplicial complexes was tested using \textsc{Nauty} \cite{nauty} by McKay, which was called in \textsc{Polymake} through the function \texttt{fan::isomorphic}. The function takes two objects of type \texttt{IncidenceMatrix}, in our case:
  \begin{enumerate}
  \item the output of \textsc{SAGE}'s \texttt{C.facets()}, \texttt{C} being the cluster complex of type $E_8$,
  \item the output of \textsc{Polymake}'s \texttt{\$F->MAXIMAL\_CONES}, \texttt{\$F} being the polyhedral fan supported on $\TropGrass^+(3,8)$ described above. \qedhere
  \end{enumerate}
\end{proof}


\section{Open questions}\label{sec:open}

In this section, we briefly discuss some open questions beyond the scope of our article.

\subsection{Coarsest structures on tropical varieties}

A frequently arising question on the geometry of tropical varieties is whether they have a natural coarsest structure, i.e., whether there is a natural coarsest polyhedral complex supported on them. While it is long known that there is no unique coarsest structure \cite[Ex. 5.2]{ST08} and that natural coarsenings of the Gr\"obner fan exist \cite{Cartwright12}, the question remains largely open.

For tropical Grassmannians in characteristic $0$ specifically, the question boils down to the following conjecture which all computations up to and including ours support:
\begin{conjecture}
  For any $w,v\in\TropGrass_0(k,n)$ in the relative interior of a maximal Gr\"obner cone we have
  \[w \text{ and } v \text{ lie on the same cone of } \Dress(k,n) \quad \Longleftrightarrow\quad \initial_w(\mathcal I_{n,k}):p^\infty = \initial_v(\mathcal I_{n,k}):p^\infty, \]
  where $\mathcal I_{n,k}$ is the Pl\"ucker ideal and $(\cdot):p^\infty$ denotes the saturation at the product of all Pl\"ucker variables.
  In particular, there is a subfan of the Dressian $\Dress(n,k)$ which coarsens the Gr\"obner subfan supported on $\TropGrass_0(n,k)$.
\end{conjecture}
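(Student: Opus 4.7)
The plan is to split the biconditional into its two directions, noting that the $\Leftarrow$ direction follows directly from an argument already present in this paper, while the $\Rightarrow$ direction requires new input. For the direction ``different Dressian cones $\Rightarrow$ different saturated initial ideals,'' I would reuse verbatim the case analysis from the second half of the proof of Theorem~\ref{thm:TropGrass38Saturation}. If $w$ and $v$ lie in distinct maximal cones of the Pl\"ucker structure on $\Dress(k,n)$, then by the definition of that structure there is a 3-term Pl\"ucker relation $\mathcal P = s_0 + s_1 + s_2$ with $\initial_w(\mathcal P) \neq \initial_v(\mathcal P)$; the short case analysis on which monomials survive each initial form produces a monomial in the common saturated initial ideal, contradicting the fact that points of a tropical variety have monomial-free saturated initial ideals. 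This argument is uniform in $(k,n)$ and characteristic $0$ and needs no new ingredient.

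For the direction ``same Dressian cone $\Rightarrow$ same saturated initial ideal,'' I would try to reduce to a statement purely about the 3-term relations. Write $J \coloneqq \langle G_3\rangle$ for the ideal generated by the set $G_3$ of all 3-term Pl\"ucker relations, and recall from \cite[Section~2]{HerrmannJensenJoswigSturmfels:2009} that $J : p^\infty = \mathcal{I}_{k,n}$. Since $\initial_w(\cdot)$ is multiplicative on monomials in the Pl\"ucker variables, chasing an element $f \in \mathcal{I}_{k,n}$ through the relation $p^N f \in J$ yields
\[
\initial_w(\mathcal{I}_{k,n}) : p^\infty \;=\; \initial_w(J) : p^\infty.
\]
The goal then becomes to prove
\[
\initial_w(J) : p^\infty \;=\; \bigl\langle \initial_w(g) \,:\, g \in G_3 \bigr\rangle : p^\infty,
\]
since the right-hand side depends only on the Dressian cone of $w$: the Pl\"ucker structure on $\Dress(k,n)$ is precisely the common refinement of the Newton fans of the 3-term relations, so all points in the relative interior of a maximal Dressian cone yield the same initial form of every $g \in G_3$.

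The main obstacle is exactly the second displayed identity: it says that $G_3$ is a Gr\"obner basis of $\mathcal{I}_{k,n}$ modulo saturation at the product of all Pl\"ucker variables. This is where the proof enters genuinely new territory, since S-polynomials of pairs of 3-term relations produce higher-term Pl\"ucker relations whose $w$-initials need not lie in $\langle \initial_w(G_3)\rangle$, and one must argue that every such obstruction is a multiple of some Pl\"ucker variable so that saturation absorbs it. Two plausible routes are (i) a combinatorial control of the S-polynomial reductions of $G_3$ showing that any extra term acquired during reduction in fact carries a factor in some $p_L$, and (ii) a direct reconstruction of the saturated initial ideal from the valuated matroid encoded by the Dressian cone, using the description of its tropical linear space as an intersection of tropical hyperplanes and then matching the resulting ideal against $\langle \initial_w(G_3)\rangle : p^\infty$. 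Either route will demand a sharper interaction between matroid subdivisions of the hypersimplex and the Gr\"obner complex of $\mathcal{I}_{k,n}$ than is presently available in the literature, which is precisely why the statement is left as a conjecture rather than proven here.
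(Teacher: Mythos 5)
This statement is an open conjecture in Section~\ref{sec:open}; the paper does not prove it, and only offers computational evidence for small $(k,n)$ (most notably Theorem~\ref{thm:TropGrass38Saturation} for $(3,8)$). Your treatment of the direction ``distinct Dressian cones $\Rightarrow$ distinct saturated initial ideals'' is correct and is literally the argument from the second half of the proof of Theorem~\ref{thm:TropGrass38Saturation}; as you note, that case analysis on a separating $3$-term Pl\"ucker relation is uniform in $(k,n)$. Your reduction of the other direction to the identity $\initial_w(\mathcal I_{k,n}):p^\infty=\initial_w(J):p^\infty$ with $J=\langle G_3\rangle$ is also sound, since $J:p^\infty=\mathcal I_{k,n}$ and initial forms are multiplicative against the monomial $p^N$.

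The genuine gap is the remaining identity $\initial_w(J):p^\infty=\langle\initial_w(g):g\in G_3\rangle:p^\infty$, which you correctly flag as unproven but for which your route~(i) cannot succeed as stated. A purely formal control of S-polynomial reductions of $G_3$ would establish that identity for \emph{every} $w$ in the Dressian, not just for $w\in\TropGrass_0(k,n)$; but Remark~\ref{rem:cubic} and Theorem~\ref{thm:TropGrass38TropicalBasis} exhibit $16$-dimensional Dressian cones (the extended Fano cones) whose relative interior misses $\TropGrass_0(3,8)$, i.e., points $w$ where $\langle\initial_w(G_3)\rangle:p^\infty$ is monomial-free while $\initial_w(\mathcal I_{3,8}):p^\infty=\langle 1\rangle$. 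So any proof must genuinely use the hypothesis that $w$ lies on the tropical Grassmannian, which rules out a syntactic Gr\"obner-basis argument and pushes you toward something like your route~(ii). That step is exactly what is missing from the literature, and your proposal does not close it; the statement therefore remains a conjecture, consistent with how the paper presents it.
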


In addition to any theoretical insight that such a coarsest structure could offer, the question is of direct relevance for two practical reasons.

On the one hand, it will improve our understanding for the complexity of tropical varieties and consequently also the feasibility of computations in tropical geometry, especially with a view towards applications \cite{LeykinYu19}. Current bounds on the f-vector of general tropical varieties are derived from universal Gr\"obner bases \cite{JoswigSchroeter:2018}, and are thus expected to be far from optimal.

On the other hand, it will help with concrete large scale computations. For $\TropGrass_2(4,8)$, our implementation in Section~\ref{sec:parallelFramework} gets stuck on a handful of isolated Gr\"obner bases containing polynomials of degree $15$, for whom simple division with remainder takes several days. Having a coarser structure might allow us to skip those problematic Gr\"obner cones which are still few and far in between.

\subsection{Positive tropicalizations and cluster complexes}

Section~\ref{sec:positiveTropicalization} contains algorithms for testing whether a maximal Gr\"obner cone $C_w(I)\subseteq\Trop(I)$ is contained in the positive tropicalization $\Trop^+(I)$. It is currently unclear whether this is sufficient to compute $\Trop^+(I)$ as not much is known about its structure. If $I$ is prime, is $\Trop^+(I)$ pure? If $\Trop^+(I)\neq\emptyset$, is $\dim (\Trop^+(I)) = \dim(\Trop(I))$?

In \cite[Section 8]{SpeyerWilliams05} Speyer and Williams moreover suspect an analogue of \cite[Conjecture 8.1]{SpeyerWilliams05} to hold for infinite type cluster algebras. For Grassmannians specifically, this means:
\begin{conjecture}
  Let $\mathcal S_{k,n}$ denote the (possibly infinite) cluster complex of the Grassmannian $\Grass(k,n)$. Then
  \[ \mathcal S_{k,n} = \varprojlim \TropGrass_0^+(k,n), \]
  where the inverse limit is taken over all reembeddings of $\TropGrass_0(k,n)$ into $\RR^{|\Lambda|}$, where $\Lambda$ is any finite set of cluster variables containing the Pl\"ucker variables.
\end{conjecture}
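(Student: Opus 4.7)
The plan is to identify $\varprojlim \TropGrass_0^+(k,n)$ with the cone over $\mathcal S_{k,n}$ via the $g$-vector fan of the Grassmannian cluster algebra. The key observation is that every cluster variable $x$ admits a positive Laurent expansion in every other cluster, so its positive tropicalization $\Trop^+(x)$ is a piecewise-linear concave function on $\TropGrass_0^+(k,n)$ whose domains of linearity refine the positive tropical fan. The $g$-vector cones of clusters are expected to carve out the maximal cones of the limit, with faces indexed by compatible sub-clusters.

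First, I would set up the inverse system rigorously. For a finite $\Lambda$ containing all Pl\"ucker coordinates, the embedding $\Grass(k,n) \hookrightarrow \mathbb{P}^{|\Lambda|-1}$ yields a positive tropical fan $\TropGrass_0^+(k,n)_\Lambda \subseteq \RR^{|\Lambda|}$. For $\Lambda \subseteq \Lambda'$, projection onto the $\Lambda$-coordinates realizes $\TropGrass_0^+(k,n)_\Lambda$ as a coarsening of $\TropGrass_0^+(k,n)_{\Lambda'}$, so we obtain a genuine inverse system of fans. Using Laurent positivity of cluster variables and the $g$-vector theorem, I would then show that for $w$ in the interior of a maximal cone of the limit there is a unique cluster $\mathbf c(w) = \{x_1,\dots,x_d\}$ such that each tropical valuation $\Trop^+(x_i)(w)$ is achieved by a single term of its Laurent expansion. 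This yields a well-defined map from maximal cones of the limit to clusters in $\mathcal S_{k,n}$.

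To upgrade this to a simplicial isomorphism, I would establish three things: (i) injectivity via separation, namely that two distinct clusters $\mathbf c \neq \mathbf c'$ can be distinguished in some finite embedding by selecting $x \in \mathbf c \setminus \mathbf c'$, whose tropicalization takes different values on the corresponding chambers; (ii) surjectivity by exhibiting, for each cluster $\mathbf c$, an explicit positive point in the associated chamber, using the fact that every cluster gives a toric chart on a dense open subset of the totally positive Grassmannian; and (iii) that cluster mutations match codimension-one wall crossings in the limit fan, using the exchange relations and the cluster scattering diagram of Gross--Hacking--Keel--Kontsevich. Together these should give a combinatorial isomorphism between $\mathcal S_{k,n}$ and the face poset of $\varprojlim \TropGrass_0^+(k,n)$.

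The main obstacle is the infinite-type case, where $\mathcal S_{k,n}$ has infinitely many maximal simplices and no single finite $\Lambda$ can separate them all. One must establish that the limit fan is locally finite, matching the known local finiteness of $\mathcal S_{k,n}$, and that the inverse limit genuinely recovers the cluster complex rather than some larger completion arising from limits of mutation sequences. A second subtlety is controlling the positive tropical Grassmannian outside the image of any cluster monomial map: ruling out extraneous maximal cones not of $g$-vector type would likely require the recent equality $\TropGrass_0^+(k,n) = \Dress^+(k,n)$ of \cite{ALS20,SW20} together with a cluster-algebraic analysis of the positive Dressian. The finite-type verifications of \cite{BS18} and of our Section~\ref{sec:positiveGrassmannian} should then serve as base cases and consistency checks for the bijection.
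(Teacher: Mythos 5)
This statement appears in the paper's final section of \emph{open questions}: it is stated as a conjecture, the paper offers no proof, and the authors explicitly remark only that ``it would be interesting to test the conjecture for $\TropGrass_0^+(4,8)$ once it can be computed.'' So there is no proof in the paper to compare against, and what you have written is a research programme rather than a proof; it should not be presented as settling the statement.

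Beyond that, the programme has a genuine gap at its central step. In your second paragraph you assert that for $w$ in the interior of a maximal cone of the limit there is a unique cluster $\mathbf c(w)$ all of whose variables tropicalize at $w$ with a single dominant Laurent term, ``using Laurent positivity and the $g$-vector theorem.'' That assertion is essentially the conjecture itself: Laurent positivity tells you each $\Trop^+(x)$ is a concave piecewise-linear function, but it does not tell you that the common refinement of their linearity domains over the positive tropical Grassmannian is indexed by clusters, nor that every maximal cone of the limit lies in the linearity domain of some full cluster. In infinite type the $g$-vector fan is not complete, so the positive tropicalization may a priori contain cones meeting no cluster chamber; you flag this under ``extraneous maximal cones'' but offer no argument to exclude them, and the equality $\TropGrass_0^+(k,n)=\Dress^+(k,n)$ of \cite{ALS20,SW20} does not by itself supply one. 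Likewise, your points about local finiteness of the limit and about the inverse limit possibly being a strict completion of the cluster complex (limits of infinite mutation sequences) are precisely the unresolved difficulties; acknowledging them does not close them. Finally, even the meaning of the asserted equality $\mathcal S_{k,n}=\varprojlim\TropGrass_0^+(k,n)$ (an abstract simplicial complex versus an inverse limit of fans, presumably via the cone over $\mathcal S_{k,n}$) needs to be fixed before any of steps (i)--(iii) can be carried out. As it stands, the statement remains open, and the correct framing of your text is as a plausible strategy whose finite-type instances are verified by \cite{BS18} and by the computation in Section~\ref{sec:positiveGrassmannian}.
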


\noindent
It would be interesting to test the conjecture for $\TropGrass_0^+(4,8)$ once it can be computed.

\subsection{Real tropicalizations and the topology of real algebraic varieties}

One motivation for tropical geometry stems from the fact that tropical varieties are capable of faithfully capturing the topology of their complex algebraic counterparts. Under special circumstances, the same holds true for real algebraic varieties, as can been seen in Viro's patchworking \cite{Viro06} and a series of works on polynomial systems with many real solutions \cite{ElHilany18,BSS18,BDIM19}.
Currently, computing the topology of real algebraic varieties remains by and large a challenging open problem with promising solutions only for special cases \cite{BHR18}.

Our algorithms in Section~\ref{sec:positiveTropicalization} can easily be modified to test maximal cones for inclusion in the following notion of real tropicalization:

\begin{definition}~\label{def:realTropicalization}
  We define the \emph{real tropicalization} of an ideal $I\unlhd K[x]$ to be
  \[ \Trop_R(I) \coloneqq \text{cl}\Big(\nu\big(V(I)\cap (R^\ast)^n\big)\Big)\subseteq\RR^n, \]
  where $R$ denotes the subset of complex Puiseux series whose lowest coefficient is real and $\text{cl}(\cdot)$ the closure in the euclidean topology.
\end{definition}

In the light of recent works on the topic of real tropicalizations \cite{JSY18}, albeit slightly different to Definition~\ref{def:realTropicalization}, a natural question is whether the real tropicalization as in Definition~\ref{def:realTropicalization} be used to compute the topology of real algebraic varieties in practice.


\renewcommand*{\bibfont}{\small}
\printbibliography

\end{document}